\theoremstyle{plain}
\newtheorem{thm}{Thm}[section]
\newtheorem{theorem}[thm]{Theorem}
\newtheorem{lemma}[thm]{Lemma}
\newtheorem{corollary}[thm]{Corollary}
\newtheorem{proposition}[thm]{Proposition}
\newtheorem{conjecture}[thm]{Conjecture}
\newtheorem{observation}[thm]{Observation}
\newtheorem{definition}[thm]{Definition}
\renewcommand\ge\geqslant
\renewcommand\geq\geqslant
\renewcommand\le\leqslant
\renewcommand\leq\leqslant
\newcommand\vph{\varphi}
\tikzstyle{vertex}=[circle, draw, fill=black!50,
\tikzset{->-/.style={decoration={
			markings,
			mark=at position .5 with {\arrow{>}}},postaction={decorate}}}
\tikzstyle{bigblue}=[color=blue, very thick, >=stealth]
\tikzstyle{lightblue}=[color=blue, thin, >=stealth]
\tikzstyle{bigred}=[color=red, very thick, >=stealth]
\tikzstyle{lightred}=[color=red, thin, >=stealth]
\tikzstyle{biggreen}=[color=black!30!green, very thick, >=stealth]
\tikzstyle{lightgreen}=[color=black!30!green,  thin, >=stealth]
\newlength{\bibitemsep}\setlength{\bibitemsep}{1pt}
\newlength{\bibparskip}\setlength{\bibparskip}{0pt}
\let\oldthebibliography\thebibliography
\renewcommand\thebibliography[1]{
  \oldthebibliography{#1}
  \setlength{\parskip}{\bibitemsep}
  \setlength{\itemsep}{\bibparskip}
}
\begin{document}
	\title{Planar Graphs with Homomorphisms to the 9-cycle}

    \author{Daniel W. Cranston\thanks{Virginia Commonwealth University, Dept.~of Computer Science; Richmond, VA, USA; \texttt{dcranston@vcu.edu}}, 
    Jiaao Li\thanks{School of Mathematical Sciences and LPMC, Nankai University, Tianjin 300071, China; \texttt{lijiaao@nankai.edu.cn}}, 
    Zhouningxin Wang\thanks{School of Mathematical Sciences and LPMC, Nankai University, Tianjin 300071, China; \texttt{wangzhou@nankai.edu.cn}},
    and Chunyan Wei\thanks{Center for Combinatorics and LPMC, Nankai University, Tianjin 300071, China; \texttt{yan1307015@163.com}}}

    \date{ }
    
	\maketitle

\begin{abstract}
We study the problem of finding homomorphisms into odd cycles from planar graphs with high odd-girth. The Jaeger-Zhang conjecture states that every planar graph of odd-girth at least $4k+1$ admits a homomorphism to the odd cycle $C_{2k+1}$. The $k=1$ case is the well-known Gr\"otzsch's $3$-coloring theorem. 
For general $k$, in 2013 Lovász, Thomassen, Wu, and Zhang showed that it suffices to have odd-girth at least $6k+1$.  Improvements are known for $C_5$ and $C_7$ in [Combinatorica 2017, SIDMA 2020, Combinatorica 2022].  
For $C_9$ we improve this hypothesis by showing that it suffices to have odd-girth 23. 
Our main tool is a variation on the potential method applied to modular orientations. 
This allows more flexibility when seeking reducible configurations. The same techniques also prove some results on circular coloring 
of signed planar graphs.
\end{abstract} 

\textbf{Keywords.} homomorphism, modular orientation, potential method.

\section{Introduction} 

\subsection{Homomorphisms to odd cycles}
A \emph{homomorphism} (or \emph{map}) of a graph $G$ to another graph $H$ is a function $\varphi: V(G)\to V(H)$ that preserves 
adjacency.  Note that a graph $G$ is $k$-colorable if and only if $G$ has a homomorphism to $K_k$.  So the study of graph homomorphisms extends and strengthens many results on graph coloring.  Gr\"otzsch's $3$-coloring theorem famously states that every planar graph of odd-girth at least $5$ admits a homomorphism to $C_3$. Recall that the \emph{odd-girth} of a graph is the length of its shortest odd cycle.  The following conjecture, which generalizes Gr\"otzsch's theorem, concerns homomorphisms of a graph $G$ to odd cycles when $G$ is planar with large odd-girth.
(It is worth noting that $C_{2k+3}$ maps to $C_{2k+1}$ for all $k\ge 1$, but not vice versa.  So proving that a graph $G$ maps to
$C_{2k+3}$ is stronger than proving that $G$ maps to $C_{2k+1}$.)

\begin{conjecture}
    \label{conj:homomorphism to odd cycles}\label{jaeger-planar-conj}
	Every planar graph of odd-girth at least $4k+1$ admits a homomorphism to $C_{2k+1}$.
\end{conjecture}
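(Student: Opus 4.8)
The plan is to attack the conjecture the way Gr\"otzsch's theorem ($k=1$) and the known $C_5,C_7$ results are attacked: by a minimal-counterexample and discharging argument, but carried out in the language of \emph{modular orientations} rather than maps directly. The dictionary I would use is the planar tension--potential correspondence. Fix a plane embedding of a connected planar graph $G$ and identify $V(C_{2k+1})$ with $\mathbb Z_{2k+1}$; then a homomorphism $\vph\colon G\to C_{2k+1}$ is the same thing as an orientation $D$ of $G$ for which the imbalance $\beta_D(f)$ --- the number of clockwise edges of $f$ minus the number of counterclockwise edges --- satisfies $\beta_D(f)\equiv 0\pmod{2k+1}$ on every face $f$. (One direction: orient $uv$ toward $v$ exactly when $\vph(v)-\vph(u)\equiv 1$; the other: the face conditions say the $\pm1$ edge labeling is a $\mathbb Z_{2k+1}$-tension, hence the coboundary of some $\vph\colon V(G)\to\mathbb Z_{2k+1}$, and the $\pm1$ values force $\vph$ to be a map to $C_{2k+1}$.) Dually this is a mod-$(2k+1)$ orientation of $G^\ast$ whose odd edge-cuts all have at least $4k+1$ edges, so the conjecture is a planar incarnation of Jaeger's circular flow problem, and the concrete goal is to lower the threshold from the $6k+1$ of Lov\'asz, Thomassen, Wu, and Zhang to $4k+1$.

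Next I would fix a counterexample $G$ minimizing $|V(G)|+|E(G)|$ and extract structure. Routine reductions give that $G$ is $2$-connected, that suppressing a degree-$2$ vertex is harmless (a $u$--$v$ path of length $\ell$ acts like a ``flexible edge'' that can realize any $\mathbb Z_{2k+1}$-value of the same parity as $\ell$ and absolute value at most $\ell$, so longer paths carry more slack), hence after suppression $\delta(G)\ge 3$, and that every face is bounded by a cycle. The engine of the method is a \emph{potential}: to each subgraph $H\subseteq G$ one attaches a number $\rho(H)$, a fixed linear combination of $|V(H)|$, $|E(H)|$, and the face-length/odd-girth data, tuned so that (i) if $H$ is a proper subgraph and $\rho(H)$ is below a threshold, then every partial modular orientation prescribed on the interface of $H$ with $G-H$ extends across $H$, and (ii) minimality of $G$ forces $\rho(H)$ above that threshold for all proper $H$. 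Working with orientations rather than maps is precisely what buys the extra flexibility advertised in the abstract: when proving a configuration \emph{reducible} one is allowed to delete edges, split vertices, or locally re-route the orientation, operations that have no clean analogue at the level of $\vph$. Using (i)--(ii) I would then assemble a list of unavoidable-looking reducible configurations --- short faces sharing an edge or a vertex, $3$-vertices occurring in restricted local patterns, short separating cycles of either parity --- each of which, if it occurred, would let one delete or contract part of $G$, invoke the homomorphism guaranteed by minimality on the smaller graph, and lift it, contradicting the choice of $G$.

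With the reducible configurations in hand, the last step is discharging. Assign each vertex $v$ the charge $\deg(v)-4$ and each face $f$ the charge $\ell(f)-4$ (plus a uniform adjustment of size comparable to $\tfrac{1}{4k+1}$, so that the total is a negative constant by Euler's formula while only the ``short'' faces and the $3$-vertices start negative); then design rules sending charge from long faces and high-degree vertices to their deficient neighbours, and use the excluded configurations to verify that every vertex and every face ends with nonnegative charge --- contradicting $\sum_v\mu(v)+\sum_f\mu(f)<0$.

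The step I expect to be the real obstacle, and the reason the conjecture is still open for general $k$, is the second one: at odd-girth exactly $4k+1$ the slack built into the potential is minimal, so reducible configurations are scarce and the discharging has essentially no margin. The present paper manages to push this program through only for $k=4$, and even then at the cost of odd-girth $23$ rather than the conjectural $17$. A proof valid for all $k$ would seem to need either a genuinely new family of reducible configurations that scales uniformly in $k$, or a global, non-discharging handle on modular orientations of high-odd-girth planar graphs (an extremal or algebraic argument) that sidesteps the configuration analysis altogether.
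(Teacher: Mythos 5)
There is a genuine gap here, and it is not a repairable one within your sketch: the statement you were asked to prove is precisely the Jaeger--Zhang conjecture, which the paper does not prove and explicitly records as open (the paper's own contribution is the much weaker \Cref{thm:23_homomorphism_to_C9}, odd-girth $23$ for maps to $C_9$, i.e.\ the case $k=4$ with hypothesis $23$ instead of the conjectured $17$). Your text is a research program, not a proof. The two load-bearing steps are only asserted, never carried out: (i) you posit a potential function ``tuned so that'' sub-threshold subgraphs are reducible, but you never define it, never prove the extension property for partial modular orientations across an interface, and never verify that minimality forces the threshold; (ii) you say you would ``assemble a list of unavoidable-looking reducible configurations,'' but no such list is produced, and producing one at odd-girth exactly $4k+1$ for all $k$ is exactly the open content of the conjecture. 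Indeed your closing paragraph concedes this, which means the proposal does not establish the statement for any $k\ge 2$, let alone all $k$. (Your tension/orientation dictionary in the first paragraph is essentially sound and matches the paper's dual formulation via \Cref{prop:circular_flow_homomorphims_C2k+1} and \Cref{lem:modular_orientation_circular_flow}, but a correct translation of the problem is not progress toward its solution.)

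For comparison with what the paper actually does: working in the dual, the authors prove only Theorem~\ref{thm:23_homomorphism_to_C9-restated}$'$ (odd-edge-connectivity $23$ implies a modular $9$-orientation), via a weight function on vertex partitions (\Cref{def:weight_function}) playing the role of your potential, a notion of strongly $\mathbb{Z}_9$-connected subgraphs that can be contracted or lifted (\Cref{prop:SZ9_lifting}), a structural theorem about minimal counterexamples (\Cref{thm:main_statement_SZ9_k8}), and a face-discharging argument in \Cref{sec:discharging}. So your outline parallels the paper's method, but even that fully executed machinery, specialized to $k=4$, only reaches odd-girth $23$; it does not and cannot (as written) deliver the $4k+1$ bound of the conjecture, precisely because the reducible configurations and the discharging margin degrade as the hypothesis approaches $4k+1$. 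Your proposal should therefore be assessed as a correct identification of the standard attack and of why it falls short, not as a proof of the statement.
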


If true, \Cref{jaeger-planar-conj} is best possible. To see this, begin with a cycle $C_{4k-1}$; add a new vertex $v$ and vertex-disjoint paths from $v$ to all vertices of the cycle, with each path of length $2k-1$.  The resulting graph is planar with odd-girth $4k-1$, but it is straightforward to check\footnote{Observe that in a hypothetical homomorphism to $C_{2k+1}$ no vertex on the original cycle $C_{4k-1}$ has the same image as $v$; thus, this $C_{4k-1}$ must map into a path, which is impossible.} that this graph has no homomorphism to $C_{2k+1}$.

For planar graphs, \Cref{jaeger-planar-conj} is the dual version of Jaeger's 1982 Circular Flow Conjecture~\cite{J88}, which asserts that every $4k$-edge-connected multigraph admits a circular $\frac{2k+1}{k}$-flow.  A \emph{circular $\frac{p}{q}$-flow}, given positive integers $p$ and $q$ with $p\geq 2q$,
of a graph $G$ is a flow $(D, f)$ such that $q\leq |f(e)|\leq p-q$ for every edge $e\in E(G)$. So a circular $\frac{2k+1}k$-flow requires that $|f(e)|\in\{k,k+1\}$ for all $e$. In 2018, Jaeger's conjecture was disproved \cite{HLWZ18} for all $k\ge 3$. However, the known counterexamples are all non-planar; so it is still possible that \Cref{jaeger-planar-conj} is true. This problem remains open. 

Let $G$ be a planar graph with girth $g$. In 1996, Ne\v{s}et\v{r}il and Zhu~\cite{NZ96} proved that $G$ maps to $C_{2k+1}$ whenever $g\ge 10k-4$.
In 2001, Zhu~\cite{Z01} improved this hypothesis to $g \ge 8k-3$. 
And in 2004, Borodin, Kim, Kostochka, and West~\cite{BKKW04} further improved this hypothesis to $g \ge \frac{20k-2}{3}$. Finally, in 2013, Lov\'asz, Thomassen, Wu, and Zhang~\cite{LTWZ13} proved that $G$ maps to $C_{2k+1}$ if $G$ has odd-girth at least $6k+1$.

The general result~\cite{LTWZ13} for odd-girth at least $6k+1$ has been improved for small $k$.  Let $g_o$ denote the odd-girth of $G$.
When $k = 2$, \cite{LTWZ13} implies that $G$ maps to $C_5$ when $g_o\ge 13$.
Dvo\v{r}\'ak and Postle~\cite{DP17} and Cranston and Li~\cite{CL20} improved this to $g_o\ge 11$.
When $k = 3$, \cite{LTWZ13} implies that $G$ maps to $C_7$ when $g_o\ge 19$.
Cranston and Li~\cite{CL20} and Postle and Smith-Roberge~\cite{PS22} both improved this to $g_o\ge 17$.
Continuing with this line of study, when $k=4$ we improve the hypothesis of~\cite{LTWZ13} from $g_o\ge 25$ to $g_o\ge 23$. That is, we prove the following.

\begin{theorem}\label{thm:23_homomorphism_to_C9}
    Every planar graph of odd-girth at least $23$ admits a homomorphism to $C_{9}$. 
\end{theorem}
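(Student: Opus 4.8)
The plan is to argue by contradiction via a minimal counterexample, working throughout with the orientation reformulation of homomorphisms into odd cycles. Recall the standard fact that, for a connected graph $G$, a homomorphism $\varphi\colon G\to C_9$ is the same as a $\mathbb{Z}_9$-tension whose value on every oriented edge lies in $\{1,-1\}$, and hence --- since on a connected graph every $\mathbb{Z}_9$-tension is induced by a potential --- the same as an orientation $D$ of $G$ in which the imbalance around every cycle (the number of forward edges minus the number of backward edges along a fixed traversal) is divisible by $9$. I would call such a $D$ a \emph{modular $9$-orientation}; more generally, for a plane graph with a distinguished set of boundary vertices I would consider modular $9$-orientations realizing prescribed imbalances there, and it is this rooted version that lets a configuration be excised and the orientation reinstalled.

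Now take a plane graph $G$ of odd-girth at least $23$ admitting no homomorphism to $C_9$, with $|V(G)|+|E(G)|$ minimum; one checks quickly that we may assume $G$ is $2$-connected. To each subgraph $H$, together with the boundary data it inherits from $G$, I would attach a potential $\rho(H)$ --- a fixed linear combination of $|V(H)|$, $|E(H)|$, and the number of edges joining $H$ to the rest of $G$ --- designed so that whenever a proper subgraph $H$ has $\rho(H)$ below a suitable threshold, $H$ is reducible: a modular $9$-orientation of the smaller plane graph obtained by deleting $H$, or by contracting a well-chosen part of it (still of odd-girth at least $23$ for the configurations under consideration, and hence admitting one by minimality), extends to a modular $9$-orientation of $G$, contradicting the choice of $G$. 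The new ingredient compared with the usual potential method is precisely that $H$ need not be contractible outright: it may carry a bounded potential deficit, paid for by the surrounding structure, and this enlarges the stock of reducible configurations, which is what should ultimately let the threshold be met at odd-girth $23$ rather than $25$.

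Next I would convert the reducibility lemmas into local structural restrictions on $G$. Since $G$ is $2$-connected, has odd-girth at least $23$, and contains no low-potential proper subgraph, its degree-$2$ vertices lie on internally disjoint threads joining vertices of degree at least $3$; the lemmas should bound the threads' lengths from below, forbid a degree-$3$ vertex from meeting too many short threads, forbid various short faces and clusters of short faces, and so on. Finally I would run a discharging argument on the plane embedding: assign each vertex and face a charge whose total, by Euler's formula, is a fixed negative number; then move charge from long faces and high-degree vertices onto degree-$2$ vertices and the faces around them; the structural restrictions should force every vertex and face to finish with nonnegative charge, a contradiction.

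The hard part will be the reducibility step --- deciding exactly which configurations, as measured by the potential, admit an extension of the modular $9$-orientation --- with two difficulties intertwined. First, excising a configuration must not drop the odd-girth below $23$: contractions that create short odd cycles are the standing hazard, and avoiding them forces configurations to be somewhat spread out, which pulls against wanting them small. Second, the extension is a genuine combinatorial constraint: along each thread and around each small face one must track the set of imbalances modulo $9$ attainable by reorienting, prove it nonempty for every requirement the rest of $G$ can impose, and leave enough slack for the potential arithmetic to balance. Since $23$ is only two below the bound implied by~\cite{LTWZ13}, that slack is thin, so the charge apportioned to degree-$2$ vertices must be tight, and closing the discharging will likely need a second round --- or a global count over the graph obtained from $G$ by suppressing its degree-$2$ vertices --- to dispose of threads of intermediate length and faces that are only barely too short.
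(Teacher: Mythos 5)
Your opening reformulation is correct (for connected $G$, a homomorphism to $C_9$ is the same as an orientation in which every cycle's number of forward edges minus backward edges is $\equiv 0 \pmod 9$), and the general framework you outline---minimal counterexample, a subgraph potential, reducible configurations, discharging via Euler's formula---is the right family of ideas. It is essentially the primal-side mirror of what the paper actually does: the paper passes to the planar dual, where odd-girth at least $23$ becomes odd-edge-connectivity at least $23$, the target object is a modular $9$-orientation in the in/out-degree sense, and the role of your potential $\rho$ is played by a weight function $\sum_i d(P_i)-23t+42$ over vertex partitions (i.e.\ over contractions), with reductions carried out by contracting dense small multigraphs and by lifting edge pairs.

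The genuine gap is that nothing beyond the framework is actually supplied, and the deferred items are the entire content of the theorem rather than routine bookkeeping. You never specify the potential, its threshold, or a proof that a below-threshold subgraph is reducible; you exhibit no reducible configuration; you do not state, let alone prove, the extension lemma asserting that the set of boundary imbalances realizable inside a configuration covers every demand the outside can impose; and you give no discharging rules, so there is no way to check that the charges close at odd-girth $23$, which is only two below the bound following from \cite{LTWZ13} and is exactly where all the difficulty sits. In the paper these steps require a quantitative ``gap lemma'' showing every nontrivial partition exceeds the weight threshold by a large margin, a list of roughly a dozen forbidden dense configurations (such as $8K_2$, $T_{1,1,7}$, $T_{2,2,6}$, $T_{3,3,5}$, $Q^{o}_{6,6,6,7}$, and the graph $F$), lifting operations to simulate edges of configurations that are not literally present, and a finely tuned face-discharging with three rules and fractional charges; none of this (or a primal analogue of it) is identified in your plan. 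Two further points you pass over: the passage from girth-type to odd-girth-type hypotheses is not free (the paper's connectivity-side argument handles $23$-edge-connected graphs directly and needs Zhang's splitting lemma plus a minimal-cut analysis to reach odd-edge-connectivity $23$; in your primal setting the analogous danger is precisely the vertex identifications you admit, which can create short odd cycles), and your assertion that the potential can be ``designed so that'' low-potential subgraphs are reducible is the statement to be discovered and proved, not a design choice one may assume.
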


\noindent
Recall that \Cref{jaeger-planar-conj} posits that the hypothesis $g_o\ge 23$ in this result can be improved to $g_o\ge 17$, which if true would be best possible.

\subsection{Modular orientations}
For a plane graph $G$ and its planar dual $G^*$, the dual of each cycle $C$ in $G$ is a minimal edge cut $C^*$ in $G^*$. So $G^*$ has odd-girth at least 23 if and only if in $G$ every minimal edge cut of odd size has size at least 23.  Such a graph $G$ has \emph{odd-edge-connectivity} at least 23.
We find it more convenient to prove \Cref{thm:23_homomorphism_to_C9} in its dual formulation. But before stating this version we need a new definition, proposition, and lemma.  The proposition is an easy exercise\footnote{Given a map $\vph$ from $G$ to $C_{2k+1}$, the faces $f$ of $G^*$ take colors $\vph^*(f)$ from the corresponding vertices of $G$.  Now each edge $e$ of $G^*$, with face $f_1$ on the left and face $f_2$ on the right, has flow value $k(\vph^*(f_1)-\vph^*(f_2))$. It is straightforward to check that this process yields a circular $\frac{2k+1}k$-flow of $G^*$.}, and the lemma was proved by Jaeger~\cite{J84} in 1984.

\begin{definition}\label{def:modular_orientation}
    Given an orientation $D$ of $G$, if the outdegree of $v$ is congruent to the indegree of $v$ modulo $2k+1$ (i.e., $d^+_D(v)-d^-_D(v)\equiv 0\pmod {2k+1}$) for every vertex $v\in V(G)$, then we call $D$ a \emph{modular $(2k+1)$-orientation}.   
\end{definition}

\begin{proposition}\label{prop:circular_flow_homomorphims_C2k+1}
    A plane graph $G$ admits a homomorphism to $C_{2k+1}$
    if and only if its planar dual $G^*$ admits a circular $\frac{2k+1}{k}$-flow.
\end{proposition}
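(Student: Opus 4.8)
My plan is to prove both implications through the standard planar duality between cycles and minimal edge cuts recalled in the paragraph above: the dual of a cycle $C$ of $G$ is a minimal edge cut $C^*$ of $G^*$, and vice versa. We may assume $G$, and hence $G^*$, is connected, since both sides of the claimed equivalence reduce to the connected case. Throughout, identify $\mathbb{Z}_{2k+1}$ with $V(C_{2k+1})$ so that $i\sim i+1$, and recall that $k$ is a unit modulo $2k+1$, with $k\cdot(-2)\equiv 1\pmod{2k+1}$. The guiding idea is that, after rescaling colours by the factor $k$, a homomorphism $G\to C_{2k+1}$ is the same object as an integer vertex-labelling $\psi\colon V(G)\to\mathbb{Z}$ whose potential differences $\psi(u)-\psi(v)$ all lie in $\{\pm k,\pm(k+1)\}$, and these potential differences transport across the duality to a circular $\frac{2k+1}{k}$-flow of $G^*$.

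($\Rightarrow$) Let $\varphi\colon V(G)\to\mathbb{Z}_{2k+1}$ be a homomorphism to $C_{2k+1}$. For each vertex $v$ of $G$ let $\psi(v)\in\{0,1,\dots,2k\}$ be the integer representative of $k\,\varphi(v)\bmod(2k+1)$. If $uv\in E(G)$ then $\varphi(u)-\varphi(v)\equiv\pm1$, so $\psi(u)-\psi(v)\equiv\pm k\pmod{2k+1}$; since $\psi(u),\psi(v)\in\{0,\dots,2k\}$ their difference lies in $\{k,-k,k+1,-(k+1)\}$, and in particular $|\psi(u)-\psi(v)|\in\{k,k+1\}$. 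Now define a flow on $G^*$: for an edge $e^*$ of $G^*$ dual to $uv\in E(G)$, oriented so that the face corresponding to $u$ lies on its left, set $f(e^*)=\psi(u)-\psi(v)$. Flow conservation at a vertex $w$ of $G^*$ is immediate: $w$ corresponds to a face $F$ of $G$ with boundary walk $u_0u_1\cdots u_{m-1}u_0$, and the net flow leaving $w$ equals $\pm\sum_{i}\bigl(\psi(u_i)-\psi(u_{i+1})\bigr)=0$ by telescoping. Hence $f$ is an integer flow with $|f(e^*)|\in\{k,k+1\}$ for every $e^*$, i.e.\ a circular $\frac{2k+1}{k}$-flow of $G^*$. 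This is exactly the construction sketched in the footnote above; the only subtlety is that one must commit to a single integer representative $\psi(v)$ of $k\,\varphi(v)$ per vertex, so that the telescoping sum is $0$ over $\mathbb{Z}$ and not merely $0\bmod(2k+1)$, and that the factor $k$ is what forces the edge values into $\{k,k+1\}$ rather than the unusable set $\{1,2k\}$.

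($\Leftarrow$) Conversely, let $f$ be a circular $\frac{2k+1}{k}$-flow of $G^*$, so $f$ is an integer flow with $k\le|f(e^*)|\le k+1$, hence $|f(e^*)|\in\{k,k+1\}$, for every $e^*$. Transport $f$ to an edge function $t$ on $G$ by setting $t(uv)=\pm f(e^*)$ with signs fixed consistently by the embedding. For every cycle $C$ of $G$, the signed sum $\sum_{e\in C}\pm t(e)$ equals a signed sum of $f$ over the minimal edge cut $C^*$ of $G^*$, which vanishes by flow conservation; so $t$ integrates, along the paths of any spanning tree, to a function $\psi\colon V(G)\to\mathbb{Z}$ with $\psi(u)-\psi(v)=t(uv)$ for every edge, whence $\psi(u)-\psi(v)\in\{\pm k,\pm(k+1)\}$. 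Finally define $\varphi(v)\equiv -2\,\psi(v)\pmod{2k+1}$. For an edge $uv$ we get $\varphi(u)-\varphi(v)\equiv -2\bigl(\psi(u)-\psi(v)\bigr)$, which is $\equiv1$ when $\psi(u)-\psi(v)\in\{k,-(k+1)\}$ and $\equiv-1$ when $\psi(u)-\psi(v)\in\{-k,k+1\}$; in either case $\varphi(u)$ and $\varphi(v)$ are adjacent in $C_{2k+1}$, so $\varphi$ is a homomorphism $G\to C_{2k+1}$.

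I expect no genuine difficulty here, consistent with the claim that this is an easy exercise; the work is bookkeeping. One must apply the cycle/edge-cut duality with a consistent orientation convention so that absolute edge values are preserved, note the routine reduction to connected $G$, and keep the arithmetic around the unit $k$ and its inverse $-2$ straight. The one place worth stating carefully is the forward direction, where choosing integer representatives of $k\,\varphi(v)$ (rather than working modulo $2k+1$) is precisely what upgrades the footnote's mod-$(2k+1)$ recipe to an honest integer circular flow.
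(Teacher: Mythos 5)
Your proof is correct and takes essentially the same route as the paper, which proves this proposition only via a footnote sketching the forward direction (each face of $G^*$ inherits the color of the corresponding vertex of $G$, and the dual edge with faces $f_1,f_2$ on its two sides gets flow value $k(\varphi^*(f_1)-\varphi^*(f_2))$) and declares the rest an easy exercise. Your integer-representative bookkeeping and the reverse (cut--cycle duality plus integrating the tension to a potential, then rescaling by $-2\equiv k^{-1}$) simply fill in the details the paper omits.
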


\begin{lemma}\label{lem:modular_orientation_circular_flow}{\em \cite{J84}}
	A graph $G$ has a circular $\frac{2k+1}{k}$-flow if and only if $G$ admits a modular $(2k+1)$-orientation.
\end{lemma}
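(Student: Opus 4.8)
The plan is to prove the two implications separately. The passage from a circular flow to a modular orientation is a short computation modulo $2k+1$, whereas the converse is the substantive part, and I would reduce it to a feasibility statement for a flow with prescribed vertex excesses. For the first direction, let $(D,f)$ be a circular $\frac{2k+1}{k}$-flow, so $k\le|f(e)|\le k+1$ for every edge $e$. Reversing the edges on which $f$ is negative preserves conservation, so I may assume $f(e)>0$ and hence $f(e)\in\{k,k+1\}$ for all $e$ (immediately if flows are taken integral by convention; otherwise because the polytope of conservative $g$ with $k\le g(e)\le k+1$ has the totally unimodular incidence matrix of the digraph as its constraint matrix, and so has an integral vertex). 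Since $k+1\equiv -k\pmod{2k+1}$, reversing every edge with $f(e)=k+1$ yields a digraph $D'$ for which conservation of $f$, read modulo $2k+1$, says exactly that the constant value $k$ on every edge is a $\mathbb{Z}_{2k+1}$-flow on $D'$, i.e.\ $k\bigl(d^+_{D'}(v)-d^-_{D'}(v)\bigr)\equiv 0\pmod{2k+1}$ for every $v$. As $\gcd(k,2k+1)=1$, this forces $d^+_{D'}(v)\equiv d^-_{D'}(v)\pmod{2k+1}$, so $D'$ is a modular $(2k+1)$-orientation.

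For the converse, let $D$ be a modular $(2k+1)$-orientation and write $d^+_D(v)-d^-_D(v)=(2k+1)t_v$ with $t_v\in\mathbb{Z}$; note $\sum_v t_v=0$. Assigning the value $k$ to every edge of $D$ gives a $\mathbb{Z}_{2k+1}$-flow whose integral excess at $v$ equals $k(2k+1)t_v$. I would cancel this excess by choosing an edge set $S\subseteq E(G)$ and lowering the value on each edge of $S$ from $k$ to $k-(2k+1)=-(k+1)$: the resulting assignment $f$ still satisfies $|f(e)|\in\{k,k+1\}$ for every edge, and it is conservative on $D$ precisely when the indicator $\chi_S$, viewed as a $\{0,1\}$-valued flow on $D$, satisfies $\partial_D\chi_S(v)=k\,t_v$ at every vertex $v$, where $\partial_D\chi_S(v)$ denotes the net number of $S$-edges leaving $v$ in $D$.

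Everything thus reduces to the feasibility of a $\{0,1\}$-valued flow on $D$ with prescribed excesses $k\,t_v$. By the Gale--Hoffman feasibility criterion such a flow exists --- and may be taken integral, hence a genuine edge set $S$ --- if and only if $\sum_v k\,t_v=0$, which holds, and $k\sum_{v\in X}t_v\le e^+_D(X,\overline{X})$ for every $X\subseteq V(G)$, where $e^+_D(X,\overline{X})$ is the number of edges of $D$ directed out of $X$. Using the identity $e^+_D(X,\overline{X})-e^-_D(X,\overline{X})=\sum_{v\in X}\bigl(d^+_D(v)-d^-_D(v)\bigr)=(2k+1)\sum_{v\in X}t_v$ to eliminate the $t_v$, the cut condition becomes $k\bigl(e^+_D(X,\overline{X})-e^-_D(X,\overline{X})\bigr)\le(2k+1)\,e^+_D(X,\overline{X})$, i.e.\ $0\le(k+1)\,e^+_D(X,\overline{X})+k\,e^-_D(X,\overline{X})$, which is trivially true. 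Hence $S$ exists, and the orientation $D$ carrying value $k$ on $E(G)\setminus S$ and $-(k+1)$ on $S$ is a circular $\frac{2k+1}{k}$-flow.

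I expect the main obstacle to be the second direction, and within it the step of recognizing that the right object to look for is the edge set $S$ with prescribed excesses $k\,t_v$; once the problem is cast this way the Gale--Hoffman cut inequality is forced to collapse to a trivial bound, so the real content lies in the reformulation rather than in any computation. (Alternatively, the second direction can be deduced from known integrality properties of the circular-flow polytope, but the flow-feasibility argument has the merit of being self-contained, and for $k=1$ it specializes to the classical equivalence between modular $3$-orientations and nowhere-zero $3$-flows.)
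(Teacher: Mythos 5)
Your proof is correct. Note that the paper does not prove this lemma at all --- it is quoted from Jaeger's 1984 paper [J84] --- so there is no in-paper argument to compare against; what you give is a sound, self-contained proof in the same spirit as the classical one. The first direction is the routine part and your computation is right: after normalizing to $f(e)\in\{k,k+1\}$ (integrality being the paper's convention anyway) and reversing the $(k+1)$-edges, conservation read modulo $2k+1$ gives $k\bigl(d^+_{D'}(v)-d^-_{D'}(v)\bigr)\equiv 0\pmod{2k+1}$, and invertibility of $k$ modulo $2k+1$ finishes. For the converse, your reformulation --- find $S\subseteq E(G)$ whose indicator has prescribed net out-degree $k\,t_v$ at every vertex, then lower the value on $S$ from $k$ to $-(k+1)$ --- is exactly the right move, and the Gale--Hoffman cut condition does collapse as you claim: using $e^+_D(X,\overline{X})-e^-_D(X,\overline{X})=(2k+1)\sum_{v\in X}t_v$ it becomes $0\le(k+1)\,e^+_D(X,\overline{X})+k\,e^-_D(X,\overline{X})$, which is vacuous, and integral capacities guarantee an integral (hence $\{0,1\}$-valued) solution, so $S$ is a genuine edge set and $(D,f)$ with $f=k$ off $S$ and $f=-(k+1)$ on $S$ is a circular $\frac{2k+1}{k}$-flow. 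Jaeger's original argument likewise converts the constant-$k$ $\mathbb{Z}_{2k+1}$-flow attached to a modular orientation into an integer flow with values in $\{\pm k,\pm(k+1)\}$ via a Hoffman/Tutte-type feasibility step, so your route is essentially the standard one rather than a different approach; the only value judgement to add is that your explicit Gale--Hoffman casting makes the ``why the cut condition is free'' point pleasantly transparent.
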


Suppose that $G^*$ is a plane graph with odd-girth at least $23$. Now $G$ has odd-edge-connectivity at least $23$. By~\Cref{prop:circular_flow_homomorphims_C2k+1}, $G^*$ admits a homomorphism to $C_9$ if and only if $G$ admits a 
circular $\frac{9}{4}$-flow. And by~\Cref{lem:modular_orientation_circular_flow}, this is equivalent to $G$ having a modular $9$-orientation.  Thus, we reach the following dual formulation of~\Cref{thm:23_homomorphism_to_C9}, which is what we will prove.

\medskip
\noindent
\textbf{{\Cref*{thm:23_homomorphism_to_C9}$'$}.}\label{thm:23_homomorphism_to_C9-restated} 
\textit{Every planar graph with odd-edge-connectivity at least $23$ admits a modular 9-orientation.}
\medskip

In fact, we will consider a more general type of orientation, which includes modular $k$-orientations as a special case.
But we defer this definition until \Cref{subsec:strongly_Z9_connected}.  In the remainder of this section, we 
present much of the intuition behind our proofs.

When proving graph coloring results, it is typical to phrase hypotheses in terms 
of \emph{maximum average degree}, which is the maximum of the average degrees of all subgraphs. 
Given a bound on maximum average degree, we can find some reducible configurations with good properties. 
For instance, given a graph $G$, we form a smaller graph $G'$ by deleting a suitable subgraph $H$, 
we color $G'$ by induction, and we can then extend every proper coloring of $G'$ to $G$. 
To prove Theorem~\ref{thm:23_homomorphism_to_C9-restated}$'$, we will construct a modular 9-orientation. Thus we seek a concept, in the context of modular orientations, that parallels reducible configurations for graph coloring.

The dual of edge deletion in a planar graph $G$ is edge \emph{contraction} in its planar dual $G^*$. For an edge $e\in E(G)$, \emph{contracting} $e$ means identifying its two endpoints and deleting the resulting loop. We denote the resulting graph by $G/e$. Now the graph $G/H$, where $H\subseteq G$, is formed from $G$ by contracting all edges of $E(H)$.
Thus, our reducible configurations in this context will be subgraphs $H$ such that we can form $G'$ by contracting $H$,
find a modular orientation for $G'$, and then extend this orientation to a modular orientation for $G$. Intuitively, such a process is easier when $G$ has higher edge-connectivity.
But rather than working directly with edge-connectivity, we rely on a weight function introduced in~\cite{CL20}.  
Analogous to maximum average degree, the weight of a graph is defined as a minimum over all vertex-partitions of 
the number of edges between parts, minus a term linear in the number of parts. Intuitively, a graph with higher 
edge-connectivity has higher weight.  This weight function is motivated by the Nash-Williams--Tutte Theorem~\cite{N61,T61}, which characterizes graphs with $k$ edge-disjoint spanning trees, for each positive integer $k$.

\begin{definition}\label{def:weight_function}
For a graph $G$ and a partition $\mathcal{P}$ of $V(G)$ with parts $P_1,P_2,\ldots,P_t$, the \emph{weight function} is defined 
as follows:
\begin{equation}\label{equ:weight_function}w_G(\mathcal{P}):=\sum_{i=1}^{t}d(P_i)-23t+42.
\end{equation}	
Here $d(P_i)$ is the number of the edges of $E(G)$ with exactly one endpoint in $P_i$.

The \emph{weight} of a graph $G$ is the minimum weight over all its partitions, i.e., $w(G):=\min\limits_{\mathcal{P}} w_{G}(\mathcal{P})$.
\end{definition}

We will prove our main results for all planar graphs $G$ with $w(G)\ge 0$, excluding a few well-understood exceptional graphs. To help build intuition, first consider the trivial partition $\mathcal{P}_0$, where each vertex forms its own part.
Here the inequality $0\leq w(G)\le w_G(\mathcal{P}_0)=2e(G)-23v(G)+42$
implies that $e(G)\geq \frac{1}{2}(23v(G)-42)$.
It is helpful to observe that $w(G/H)\ge w(G)$ for all graphs $G$ and connected subgraphs $H$. Thus, we aim to find a ``good'' subgraph $H$, recursively find the desired orientation for $G/H$, and then extend this orientation to $G$, due to our choice of $H$. However, this
is not always possible. Sometimes a contractible graph $H$ is not a subgraph of $G$, but $G$ contains some $H'$ formed from $H$ by deleting a few edges. In this case, we can often ``lift'' pairs of edges outside $E(H')$ to simulate the missing edges of $H$.

It turns out that this process of finding good reductions, via contraction (and sometimes lifting)---which is the key to proving Theorem~\ref{thm:23_homomorphism_to_C9-restated}$'$---also yields a result on $r$-flows in \emph{signed} planar graphs (see 
\Cref{thm:23_circular_flow_signed_planar_graphs}). Thus, we extract from the proof of Theorem~\ref{thm:23_homomorphism_to_C9-restated}$'$ a key structural result, \Cref{thm:main_statement_SZ9_k8} (see below), the proof of which requires most of our work.

It is good now to explain the choice of the constants $23$ and $42$ in the definition of the weight function.
The number $23$ in the weight function comes from our desired edge-connectivity. Given a $23$-edge-connected graph $G$, clearly $w(G)\ge 42$, since for every partition $\{P_1,P_2,\ldots,P_t\}$ of $V(G)$, each term $d(P_i)$ in the sum is at least $23$; thus, our results will apply.  We will prove our main theorem by induction, and it is convenient for the inductive hypothesis to be satisfied by certain graphs of small order that are not 23-edge-connected. This motivates
introducing the additive constant 42, which ensures\footnote{We have some flexibility in picking this constant $c$. 
We need $c\ge 42$ to ensure that $w(G)\ge 0$ for every 4-vertex graph with at least 25 edges; see~\Cref{lem:SZ9_small_graphs}\ref{lem:4small_SZ_9}. 
And we need $c\le 45$ to get a contradiction via discharging in Section~\ref{sec:discharging}; see~\Cref{equ:discharge}.
Proving~\Cref{thm:main_statement_SZ9_k8} with a larger value of $c$ would be a stronger result.  
However, proving such a statement would likely require much more work, since the analogous set $\mathcal{N}\cup\mathcal{W}^*$ of exceptional graphs would
be larger. We prefer to keep the proof as simple as possible, which is why we chose $c=42$.} that we will have $w(G)\ge 0$ for these various graphs of small order.  

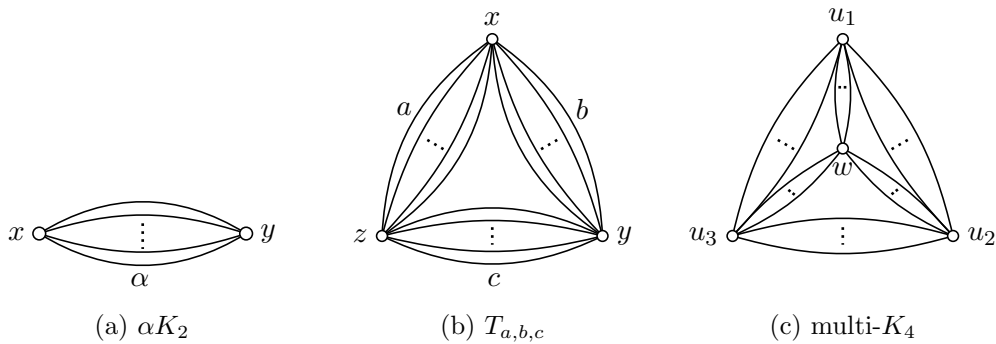
\begin{figure}[tbp]
	\centering
	\begin{subfigure}[t]{.28\textwidth}
        \centering
		\begin{tikzpicture}[scale=.68]		
			\draw [line width=1pt, dotted] (0, 0.2) to (0, -0.3);
			\draw(0.4, -0.9) node[left=0.5mm]  {$\alpha$};
			\draw [bend left=18, line width=0.6pt, black] (-2,0) to (2,0);
			\draw [bend right=18, line width=0.6pt, black] (-2,0) to (2,0);
			\draw [bend left=32, line width=0.6pt, black] (-2,0) to (2,0);
			\draw [bend right=32, line width=0.6pt, black] (-2,0) to (2,0);
			
			\draw [fill=white,line width=0.6pt] (-2,0) node[left=0.5mm] {$x$} circle (3.5pt);   
			\draw [fill=white,line width=0.6pt] (2,0) node[right=0.5mm] {$y$} circle (3.5pt);          
		\end{tikzpicture}
		\caption{$\alpha K_2$}
		\label{fig:aK2}     
	\end{subfigure}
	\begin{subfigure}[t]{.28\textwidth}
		\centering
		\begin{tikzpicture}[scale=.58]			
			\draw [rotate=240] [line width=1pt, dotted] (1, -0.7) to (1, -1.2);
			\draw [bend left=14, line width=0.6pt, black] (0,2) to  (-2.5,-2.5) ;
			\draw [bend right=14, line width=0.6pt, black] (0,2) to  (-2.5,-2.5) ;
			\draw [bend left=26, line width=0.6pt, black] (0,2) to  (-2.5,-2.5) ;
			\draw [bend right=26, line width=0.6pt, black] (0,2) to  (-2.5,-2.5) ;

			\draw [line width=1pt, dotted] (0, -2.3) to (0, -2.7);
			\draw [bend left=14, line width=0.6pt, black]  (-2.5,-2.5)  to (2.5,-2.5);
			\draw [bend right=14, line width=0.6pt, black]  (-2.5,-2.5)  to (2.5,-2.5);
			\draw [bend left=26, line width=0.6pt, black]  (-2.5,-2.5)  to (2.5,-2.5);
			\draw [bend right=26, line width=0.6pt, black]  (-2.5,-2.5)  to (2.5,-2.5);

			\draw [rotate=120] [line width=1pt, dotted](-1, -0.7) to (-1, -1.2);
			\draw [bend left=14, line width=0.6pt, black] (0,2) to (2.5,-2.5); 
			\draw [bend right=14, line width=0.6pt, black] (0,2) to (2.5,-2.5);
			\draw [bend left=26, line width=0.6pt, black] (0,2) to (2.5,-2.5);
			\draw [bend right=26, line width=0.6pt, black] (0,2) to (2.5,-2.5);
			
			\draw [fill=white,line width=0.6pt] (0,2) node[above=0.5mm] {$x$} circle (3.5pt); 
			\draw [fill=white,line width=0.6pt] (2.5,-2.5) node[right=0.5mm] {$y$} circle (3.5pt);          
			\draw [fill=white,line width=0.6pt] (-2.5,-2.5) node[left=0.5mm] {$z$} circle (3.5pt);            
			\draw  (-1.5,0.4) node[left=0.5mm] {$a$};   
			\draw(0.5,-3.5) node[left=0.5mm] {$c$};  
			\draw  (2.5,0.4) node[left=0.5mm] {$b$};  
		\end{tikzpicture}
		\caption{$T_{a,b,c}$}
		\label{fig:Tabc}
	\end{subfigure}
    \begin{subfigure}[t]{.28\textwidth}
		\centering
		\begin{tikzpicture}[scale=.58]	
			\draw [rotate=240] [line width=1pt, dotted] (1, -0.7) to (1, -1.2);			
			\draw [bend left=14, line width=0.6pt, black] (0,2) to  (-2.5,-2.5);
			\draw [bend right=16, line width=0.6pt, black] (0,2) to  (-2.5,-2.5);
			
			\draw [line width=1pt, dotted] (0, -2.3) to (0, -2.7);		
			\draw [bend left=16, line width=0.6pt, black]  (-2.5,-2.5)  to (2.5,-2.5);
			\draw [bend right=16, line width=0.6pt, black]  (-2.5,-2.5)  to (2.5,-2.5);
			
			\draw [rotate=120] [line width=1pt, dotted](-1, -0.7) to (-1, -1.2);
			\draw [bend left=16, line width=0.6pt, black] (0,2) to (2.5,-2.5); 
			\draw [bend right=14, line width=0.6pt, black] (0,2) to (2.5,-2.5);

            \draw[rotate=0] [line width=1pt, densely dotted] (-0.1,0.9) to (0.1,0.9);
			\draw [bend left=14,line width=0.6pt] (0,-0.5) to (0,2);
			\draw [bend right=14,line width=0.6pt] (0,-0.5) to (0,2); 

            \draw[rotate=218] [line width=1pt, densely dotted] (-0.1,1.9) to (0.1,1.9);
			\draw [bend left=10,line width=0.6pt] (0,-0.5) to (2.5,-2.5);
			\draw [bend right=14,line width=0.6pt] (0,-0.5) to (2.5,-2.5);

            \draw[rotate=142] [line width=1pt, densely dotted] (-0.1,1.9) to (0.1,1.9);
			\draw [bend left=14,line width=0.6pt] (0,-0.5) to (-2.5,-2.5);
			\draw [bend right=10,line width=0.6pt] (0,-0.5) to (-2.5,-2.5);

            \draw [fill=white, line width=0.6pt] (0,2) node[above=0.5mm] {$u_1$} circle (3.5pt); 
			\draw [fill=white, line width=0.6pt] (2.5,-2.5) node[right=0.5mm] {$u_2$} circle (3.5pt); 
			\draw [fill=white, line width=0.6pt] (-2.5,-2.5) node[left=0.5mm] {$u_3$} circle (3.5pt);  
            \draw [fill=white, line width=0.6pt] (0,-0.5) node[below=0.5mm] {$w$} circle (3.5pt);       
			\draw [color=white](-1.5,0.4) node[left=0.5mm] {$a$};   
			\draw [color=white](0.5,-3.5) node[left=0.5mm] {$c$};  
			\draw [color=white](2.5,0.4) node[left=0.5mm] {$b$}; 
        \end{tikzpicture}
		\caption{multi-$K_4$}
		\label{fig:K4}
	\end{subfigure}
	\caption{\small The graphs $\alpha K_2$, $T_{a,b,c}$, and multi-$K_4$.}
	\label{fig:ak_2 and Tabc}
\end{figure}

But this additive term also brings
complications, since now our theorem's hypothesis will be satisfied by some small graphs that do not satisfy the desired conclusion. Thus, we must classify exactly which small graphs do and do not satisfy this conclusion.
So we define the following two families of graphs: $\mathcal{N}$ and $\mathcal{W}^*$. As shown in~\Cref{fig:ak_2 and Tabc}, 
the graph $\alpha K_2$ has vertex set $\{x,y\}$ with its vertices joined by $\alpha$ parallel edges ($\alpha\geq 1$), and $T_{a,b,c}$ has vertex 
set $\{x,y,z\}$ with its pairs of vertices joined by $a$, $b$, and $c$ parallel edges ($\min\{a,b,c\}\geq 1$).
\begin{eqnarray*}
	&\mathcal{N}:=\{\alpha K_2\colon \alpha\leq 7\}\cup\{T_{a,b,c}\colon a+b+c\leq 15\}\\
	&\mathcal{W}^*:=\{8K_2\}\cup\{T_{a,b,c}\colon a+b+c=16,\delta\geq 9\}
\end{eqnarray*}
As we will explain in more detail when we state~\Cref{thm:partition_SZ9_SC16} (see \Cref{sec:proofs of two theorems}),
all graphs in $\mathcal{N}$ are too sparse to satisfy our desired conclusion, so we will
need to explicitly exclude them from our main theorem.  Graphs in $\mathcal{W}^*$ will {satisfy} our conclusion, but only barely.
So we will need to handle them more delicately than other graphs to which our main result applies.

When specifying a graph formed by contraction, we will frequently use the following definition. For a graph $G$ and a partition $\mathcal{P}$ with parts $P_1,P_2,\ldots,P_t$ of $V(G)$, let $G/\mathcal{P}$ denote the graph formed from $G$ by identifying all the vertices of $G[P_i]$ for each $i\in [t]$. Let $|\mathcal{P}|$ denote the number of parts in $\mathcal{P}$ and observe that $v(G/\mathcal{P})=|\mathcal{P}|$.
We will often want to know the weight of $\alpha K_2$, $T_{a,b,c}$, and graphs with four vertices, so we record the following observation for easy reference. 

\begin{observation}\label{ob:weight_small_graphs}
    \begin{enumerate}[label=(\arabic*)]
    	\setlength{\itemsep}{0.1em}
    	\item Each graph $\alpha K_2$ satisfies $w(\alpha K_2)=2\alpha-4$.
    	\item Each graph $T_{a,b,c}$ without $8K_2$ satisfies $w(T_{a,b,c})=2(a+b+c)-27$. 
    	\item Each graph $G$ on four vertices without $8K_2$ satisfies $w(G)=2e(G)-50$. 
    \end{enumerate}
\end{observation}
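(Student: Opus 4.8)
The plan is to evaluate the weight function $w_G(\mathcal{P})$ of \Cref{def:weight_function} on every partition $\mathcal{P}$ of $V(G)$ and take the minimum; since $v(G)\le 4$ throughout, there are at most $15$ partitions, so this is a finite check. In each case the claimed value is $w_G(\mathcal{P}_0)$, where $\mathcal{P}_0$ puts every vertex in its own part: then every edge joins two distinct parts, so $\sum_i d(P_i)=2e(G)$, and hence $w_G(\mathcal{P}_0)=2\alpha-4$ for $\alpha K_2$, $\;2(a+b+c)-27$ for $T_{a,b,c}$, and $2e(G)-50$ for a $4$-vertex graph $G$. So it remains only to show that $\mathcal{P}_0$ attains the minimum.

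I would do this through the identity
\[
w_G(\mathcal{P})-w_G(\mathcal{P}_0)=23\bigl(v(G)-|\mathcal{P}|\bigr)-2\mu(\mathcal{P}),
\]
where $\mu(\mathcal{P})$ denotes the number of edges of $G$ with both endpoints in a single part of $\mathcal{P}$: each such edge contributes $0$ to $\sum_i d(P_i)$ rather than $2$, while the term $-23t$ rises by $23(v(G)-|\mathcal{P}|)$. For $\alpha K_2$ the only partition besides $\mathcal{P}_0$ merges $x$ and $y$, with $\mu=\alpha$, so the difference equals $23-2\alpha$, which is nonnegative once $\alpha\le 11$. For $T_{a,b,c}$ and the $4$-vertex case, the ``$8K_2$-free'' hypothesis says that every pair of vertices spans at most $7$ edges, so a part of size $p$ contains at most $7\binom{p}{2}$ edges; since $7\binom{p}{2}\le\tfrac{23}{2}(p-1)$ for $p\le 3$, the displayed difference is nonnegative for every $\mathcal{P}$ all of whose parts have size at most $3$. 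For $T_{a,b,c}$ every part has size at most $3$, so $w(T_{a,b,c})=w_G(\mathcal{P}_0)$ with no further assumption; for a $4$-vertex $G$ the sole remaining partition $\{V(G)\}$ has weight $0-23+42=19$, which is at least $2e(G)-50$ exactly when $e(G)\le 34$, a bound that holds for every $4$-vertex graph arising in our proofs (equivalently, $w_G(\mathcal{P}_0)\le 19$). Combining these yields all three equalities.

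The computation is short and bounded, so I do not anticipate a genuine obstacle; the only point requiring care is isolating where the hypotheses actually do work. The $8K_2$-free condition is exactly what prevents a coarsening that merges a single high-multiplicity pair from strictly lowering $w_G$ (a pair joined by at least $12$ parallel edges would), and the mild side conditions $\alpha\le 11$ and $e(G)\le 34$ are exactly what keep the trivial one-part partition (of weight $19$) from beating $\mathcal{P}_0$; both are automatic in every situation where the observation is invoked. Once these are pinned down, the explicit list of partitions for the $2$-, $3$-, and $4$-vertex cases is checked directly.
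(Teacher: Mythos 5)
Your core computation is the same as the paper's: evaluate $w_G(\mathcal{P}_0)$ on the trivial partition (giving the three stated values) and show every coarser partition is no smaller, using the $8K_2$-free hypothesis to bound by $7$ the number of edges inside any pair of merged vertices. Up to that point your identity $w_G(\mathcal{P})-w_G(\mathcal{P}_0)=23\bigl(v(G)-|\mathcal{P}|\bigr)-2\mu(\mathcal{P})$ and the bound $7\binom{p}{2}\le\frac{23}{2}(p-1)$ for $p\le 3$ reproduce exactly the paper's comparisons ($23-14>0$ for the three-vertex case, $46-42>0$ for the two-part partitions of a four-vertex graph).

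The gap is your treatment of the one-part partition $\{V(G)\}$. The paper's convention, stated in \Cref{sec:preliminary}, is that this partition is excluded throughout (``Throughout, we exclude the partition with a single part $\{V(G)\}$''), so the minimum in \Cref{def:weight_function} ranges only over partitions with at least two parts. Because you include it, you are forced to add side conditions that are not hypotheses of the observation: $\alpha\le 11$ in part (1) and $e(G)\le 34$ in part (3). Neither is implied by ``without $8K_2$'' (a four-vertex multigraph with $\mu\le 7$ can have up to $42$ edges), and appealing to ``every situation where the observation is invoked'' does not prove the statement as written; indeed, under your reading the claimed equalities would simply be false for $\alpha\ge 12$ or $e(G)\ge 35$, which should have signaled that you were using a different notion of partition than the paper. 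With the paper's convention the one-part partition never enters, part (1) holds for all $\alpha$ because the trivial partition is then the only partition of a two-vertex graph, and your remaining estimates already finish parts (2) and (3) with no extra conditions. So the argument is correct in substance but, as written, proves a conditional variant rather than the stated observation; the fix is only to invoke the stated convention on partitions.
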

\begin{proof}
For $\alpha K_2$, $w(\alpha K_2)=2\alpha-23\times 2+42=2\alpha-4$.

In the remaining two cases, every two vertices are joined by at most $7$ parallel edges, that is, we assume $\mu\le 7$, where $\mu$ is the maximum multiplicity over all edges in the graph we consider. For a graph $T_{a,b,c}$, let $\mathcal{P}_0$ be its trivial partition, and let $\mathcal{P}$ be an arbitrary partition with two parts. Since $\mu\le 7$, note that $w_{\scriptscriptstyle T_{a,b,c}}(\mathcal{P})\ge w_{\scriptscriptstyle T_{a,b,c}}(\mathcal{P}_0)+23-2\times 7>
w_{\scriptscriptstyle T_{a,b,c}}(\mathcal{P}_0)$.
Thus the minimum in the definition of $w(T_{a,b,c})$ is achieved by the trivial partition (with $3$ parts).   
For a graph $G$ with $v(G)=4$, let $\mathcal{P}_0$ be the trivial partition, let $\mathcal{P}_1$ be an arbitrary partition with three parts, and $\mathcal{P}_2$ be an arbitrary partition with two parts. Similar to the above discussion, we have $w_G(\mathcal{P}_0)<w_G(\mathcal{P}_1)$.
In fact, we also have $w_G(\mathcal{P}_2)\ge w_G(\mathcal{P}_0)+2\times 23-2\times 3\times7>w_G(\mathcal{P}_0)$. Hence, $w(G)=w_{G}(\mathcal{P}_0)=2e(G)-23v(G)+42$. 
\end{proof}

In particular, $w(\alpha K_2)\leq 10$ when $\alpha\leq 7$ and $w(8K_2)= 12$. Similarly, $w(T_{a,b,c})\leq 3$ when $a+b+c\leq 15$ and $w(T_{a,b,c})=5$ when $a+b+c=16$. 
Hence, all $G\in\mathcal{N}$ satisfy $w(G)\le\max\{3,10\}=10$ and all $G\in\mathcal{W}^*$ satisfy $w(G)\le\max\{5,12\}=12$. 
Now a short case analysis yields the following observation.

\begin{observation}\label{ob:partition_bound_notin}
    Given a graph $G$ and a partition $\mathcal{P}$ of $V(G)$, 
    \begin{enumerate}[label=(\arabic*)]
        \item\label{ob:3_12_N} if $w_{G}(\mathcal{P})\geq 11$, then $G/\mathcal{P}\notin\mathcal{N}$;
        \item\label{ob:1_14_WN} if $w_{G}(\mathcal{P})\geq 13$, then $G/\mathcal{P}\notin \mathcal{N}\cup\mathcal{W}^*$;
        \item\label{ob:4_5_N} if $w_{G}(\mathcal{P})\geq 4$ and $|\mathcal{P}|\geq 3$, then $G/\mathcal{P}\notin \mathcal{N}$;
        \item\label{ob:2_7_WN} if $w_{G}(\mathcal{P})\geq 6$ and $|\mathcal{P}|\geq 3$, then $G/\mathcal{P}\notin \mathcal{N}\cup\mathcal{W}^*$; 
        \item\label{ob:5_10-edge-connected} if $w_{G}(\mathcal{P})\geq 6$ and $G$ is $9$-edge-connected, then $G/\mathcal{P}\notin\mathcal{N}\cup\mathcal{W}^*$.
    \end{enumerate}
\end{observation}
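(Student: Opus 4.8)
The plan is to prove each of the five parts by contraposition, reducing everything to a one‑line evaluation of $w_G(\mathcal{P})$ once the shape of $G/\mathcal{P}$ is pinned down. The preliminary point is purely combinatorial: every member of $\mathcal{N}\cup\mathcal{W}^*$ is a loopless multigraph on at most three vertices. Hence if $G/\mathcal{P}\in\mathcal{N}\cup\mathcal{W}^*$, no edge of $G$ is contracted into a loop, so $\sum_i d(P_i)$ counts exactly twice the edges of $G/\mathcal{P}$ and therefore
\[
w_G(\mathcal{P})=2\,e(G/\mathcal{P})-23\,|\mathcal{P}|+42 .
\]
In particular $w_G(\mathcal{P})$ is determined by the isomorphism type of $G/\mathcal{P}$, and it remains only to evaluate it on the few graphs in the two families, recovering the values in \Cref{ob:weight_small_graphs}.

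Concretely, if $G/\mathcal{P}=\alpha K_2$ then $|\mathcal{P}|=2$ and $w_G(\mathcal{P})=2\alpha-4$, while if $G/\mathcal{P}=T_{a,b,c}$ then $|\mathcal{P}|=3$ and $w_G(\mathcal{P})=2(a+b+c)-27$. Feeding in the definitions of the families: membership in $\mathcal{N}$ forces $\alpha\le 7$ or $a+b+c\le 15$, hence $w_G(\mathcal{P})\le 10$ (and $w_G(\mathcal{P})\le 3$ when $|\mathcal{P}|=3$); the extra members of $\mathcal{W}^*$ are $8K_2$ with $w_G(\mathcal{P})=12$ and $T_{a,b,c}$ with $a+b+c=16$ and $w_G(\mathcal{P})=5$. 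Summarizing: $G/\mathcal{P}\in\mathcal{N}$ forces $w_G(\mathcal{P})\le 10$; $G/\mathcal{P}\in\mathcal{N}\cup\mathcal{W}^*$ forces $w_G(\mathcal{P})\le 12$; and among the latter, the ones with $|\mathcal{P}|\ge 3$ force $w_G(\mathcal{P})\le 5$.

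Parts (1)--(4) are then immediate contrapositives of these bounds: $w_G(\mathcal{P})\ge 11$ excludes $\mathcal{N}$; $w_G(\mathcal{P})\ge 13$ excludes $\mathcal{N}\cup\mathcal{W}^*$; and if $|\mathcal{P}|\ge 3$, then $G/\mathcal{P}$ has at least three vertices, so membership in $\mathcal{N}\cup\mathcal{W}^*$ forces $|\mathcal{P}|=3$ and $G/\mathcal{P}=T_{a,b,c}$, whereupon $w_G(\mathcal{P})\ge 4$ excludes $\mathcal{N}$ and $w_G(\mathcal{P})\ge 6$ excludes $\mathcal{N}\cup\mathcal{W}^*$. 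For part (5) I would split on $|\mathcal{P}|$: if $|\mathcal{P}|\ge 3$, invoke part (4); if $|\mathcal{P}|=1$, then $G/\mathcal{P}$ has a single vertex and lies in neither family; and if $|\mathcal{P}|=2$, the edges leaving $P_1$ form an edge cut of the $9$-edge-connected graph $G$, so $d(P_1)\ge 9$, forcing $G/\mathcal{P}=\alpha K_2$ with $\alpha\ge 9$, which is impossible since every two‑vertex member of $\mathcal{N}\cup\mathcal{W}^*$ has $\alpha\le 8$.

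The argument is a short finite case check rather than anything deep, so there is no genuine obstacle; the one point deserving care is that in the $|\mathcal{P}|=2$ regime the numerical value of $w_G(\mathcal{P})$ by itself cannot rule out $8K_2$ (for which $w_G(\mathcal{P})=12$). This is precisely why the thresholds in (1) and (2) are $11$ and $13$ rather than something smaller, why (3) and (4) assume $|\mathcal{P}|\ge 3$, and why (5) uses the $9$-edge-connectivity hypothesis; the entire content of the observation is that each hypothesis is tuned to close exactly the gap left open by the two‑vertex (and, in (1)--(2), three‑vertex) cases.
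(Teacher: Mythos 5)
Your proof is correct and follows essentially the same route as the paper, which treats this statement as the ``short case analysis'' following \Cref{ob:weight_small_graphs}: evaluate $w_G(\mathcal{P})=2e(G/\mathcal{P})-23|\mathcal{P}|+42$ on the finitely many members of $\mathcal{N}\cup\mathcal{W}^*$ and take contrapositives, with $9$-edge-connectivity disposing of the two-part case in (5). One small remark: your intermediate claim that no edge of $G$ is contracted into a loop is neither needed nor true in general---edges inside a part do become loops and are deleted---but since such edges are counted neither in $\sum_i d(P_i)$ nor in $e(G/\mathcal{P})$, the displayed identity holds unconditionally and your argument is unaffected.
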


When proving our main structural theorem by induction, it is helpful to know that no sequence of edge contractions yields a graph in $\mathcal{N}$; that is, for all partitions $\mathcal{P}$, we have $G/\mathcal{P}\notin\mathcal{N}$.  This motivates the following two closely related definitions.

\begin{definition}\label{def:graph_family}
	Let $G$ be a connected graph such that $v(G)\geq2$.
	\begin{enumerate}[label=(\roman*)]
		\item If $w(G)\geq0$, $G\notin \mathcal{N}$, and $G/\mathcal{Q}\notin \mathcal{N}\cup \mathcal{W}^*$ for every nontrivial partition $\mathcal{Q}$, then we call $G$ an \emph{$\mathcal{N}$-good} graph. 
		
		\item If $w(G)\geq0$ and $G/\mathcal{P}\notin \mathcal{N}\cup \mathcal{W}^*$ for every partition $\mathcal{P}$, then we call $G$ an \emph{$\mathcal{S}$-good} graph.
	\end{enumerate}
\end{definition}

From the definitions of $\mathcal{N}$-good and $\mathcal{S}$-good graphs, we note the following.
\begin{observation}\label{ob:W*_N_good}
    Every $\mathcal{S}$-good graph is also $\mathcal{N}$-good. Every graph in $\mathcal{W}^*$ is $\mathcal{N}$-good (but not $\mathcal{S}$-good).
\end{observation}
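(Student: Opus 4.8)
The plan is to verify each of the two assertions directly from \Cref{def:graph_family}, using \Cref{ob:W*_N_good}'s own hypotheses together with the elementary weight bounds recorded in \Cref{ob:weight_small_graphs} and the case analysis of \Cref{ob:partition_bound_notin}. For the first claim, let $G$ be $\mathcal{S}$-good; we must show $G$ is $\mathcal{N}$-good. By definition $w(G)\ge 0$, so the weight hypothesis carries over. Taking $\mathcal{P}$ to be the trivial partition in the definition of $\mathcal{S}$-good gives $G=G/\mathcal{P}\notin\mathcal{N}\cup\mathcal{W}^*$, so in particular $G\notin\mathcal{N}$. Finally, for any nontrivial partition $\mathcal{Q}$, applying the $\mathcal{S}$-good condition to $\mathcal{P}=\mathcal{Q}$ yields $G/\mathcal{Q}\notin\mathcal{N}\cup\mathcal{W}^*$, which is exactly the third requirement for $G$ to be $\mathcal{N}$-good. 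Hence $G$ is $\mathcal{N}$-good.

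For the second claim, let $G\in\mathcal{W}^*$; we show $G$ is $\mathcal{N}$-good but not $\mathcal{S}$-good. The ``not $\mathcal{S}$-good'' part is immediate: the trivial partition $\mathcal{P}_0$ satisfies $G/\mathcal{P}_0=G\in\mathcal{W}^*\subseteq\mathcal{N}\cup\mathcal{W}^*$, contradicting the defining condition of $\mathcal{S}$-good. For the ``$\mathcal{N}$-good'' part, first note $w(G)\ge 0$: by \Cref{ob:weight_small_graphs}, $w(8K_2)=12\ge 0$ and $w(T_{a,b,c})=2(a+b+c)-27=2\cdot 16-27=5\ge 0$ for the graphs $T_{a,b,c}$ with $a+b+c=16$ in $\mathcal{W}^*$. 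Next, $G\notin\mathcal{N}$ since $\mathcal{N}$ and $\mathcal{W}^*$ are disjoint families (explicitly: $8K_2\notin\mathcal{N}=\{\alpha K_2:\alpha\le 7\}\cup\{T_{a,b,c}:a+b+c\le 15\}$, and a $T_{a,b,c}$ with $a+b+c=16$ is not an $\alpha K_2$ and does not have $a+b+c\le 15$). It remains to check $G/\mathcal{Q}\notin\mathcal{N}\cup\mathcal{W}^*$ for every nontrivial partition $\mathcal{Q}$; since $G$ has at most $3$ vertices, the only nontrivial partitions collapse $G$ to a single vertex or (when $v(G)=3$) to a two-vertex graph, so this reduces to a finite check.

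The main obstacle — such as it is — lies in this last verification for $G\in\mathcal{W}^*$. When $\mathcal{Q}$ identifies all vertices of $G$, the result $G/\mathcal{Q}$ is a single vertex with no edges (all edges become loops and are deleted), which trivially lies outside $\mathcal{N}\cup\mathcal{W}^*$ since every graph in those families has at least two vertices. When $G=T_{a,b,c}$ with $a+b+c=16$ and $\delta\ge 9$ (here $\delta$ denotes the minimum of $a,b,c$ — wait, the minimum pairwise multiplicity equals $\min\{a+b,a+c,b+c\}$; in any case the relevant lower bound is that each pair of the three vertices has at least $9$ edges between them under the $\mathcal{W}^*$ hypothesis), and $\mathcal{Q}$ merges two of the three vertices, the quotient is an $\alpha K_2$ whose multiplicity $\alpha$ is the sum of the two multiplicities incident to the merged pair; since those two multiplicities are each at least $9$, we get $\alpha\ge 18\ge 8$, so $\alpha K_2\notin\mathcal{N}$, and it is not in $\mathcal{W}^*$ either unless $\alpha=8$, which is impossible here. (This is precisely the point of the $\delta\ge 9$ clause in the definition of $\mathcal{W}^*$, and it can also be read off from \Cref{ob:partition_bound_notin}\ref{ob:1_14_WN} by computing $w_G(\mathcal{Q})$ for the two-part partition $\mathcal{Q}$.) Assembling these cases completes the proof.
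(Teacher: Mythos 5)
The overall structure of your argument is sound and matches the paper's (a finite check over the few nontrivial partitions of a $2$- or $3$-vertex graph), but there is a genuine error at the one step that actually needs the hypothesis $\delta\ge 9$ in the definition of $\mathcal{W}^*$. In this paper $\delta(G)$ is the minimum vertex degree, so for $T_{a,b,c}$ it equals $\min\{a+b,\,b+c,\,c+a\}$; it is not $\min\{a,b,c\}$, and your final reading---``each pair of the three vertices has at least $9$ edges between them''---cannot be the intended one, since together with $a+b+c=16$ it would force $a+b+c\ge 27$ and make this part of $\mathcal{W}^*$ empty (the paper's own illustrating example is $T_{2,7,7}$, with multiplicities $2,7,7$). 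Consequently your claim that the two multiplicities meeting the contracted pair are ``each at least $9$'', and the resulting bound $\alpha\ge 18$, are false: contracting the pair joined by $a$ parallel edges yields $\alpha=b+c=16-a$, which equals $9$ for $T_{2,7,7}$ when a pair of multiplicity $7$ is contracted.

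The correct (and easy) fix is that $\alpha$ is exactly the degree of the uncontracted vertex, so $\alpha\ge\delta\ge 9>8$; hence $G/\mathcal{Q}\notin\{\alpha K_2:\alpha\le 7\}\cup\{8K_2\}$, and since the quotient has only two vertices it is no $T_{a,b,c}$ either. This is precisely the paper's argument (carried out on $T_{2,7,7}$, together with the remark that $\delta\ge 9$ and $a+b+c=16$ force $\mu\le 7$). With that correction the rest of your write-up is fine: the first claim is immediate from \Cref{def:graph_family}, ``not $\mathcal{S}$-good'' via the trivial partition is right, and your alternative route through \Cref{ob:partition_bound_notin}\ref{ob:1_14_WN} does work, but you never perform the computation ($w_G(\mathcal{Q})=2(16-a)-4\ge 14\ge 13$ since $a\le 7$). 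Two minor points: the partition collapsing all of $V(G)$ into one part is excluded by the paper's conventions, so that case is vacuous (your treatment of it is harmless); and to invoke \Cref{ob:weight_small_graphs}(2) for $w(T_{a,b,c})=5$ you should note that $\delta\ge 9$ with $a+b+c=16$ gives $\mu\le 7$, i.e., no $8K_2$, which is a hypothesis of that observation.
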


The second statement requires a bit of case analysis. We illustrate this with an example: the graph $T_{2,7,7}\in \mathcal{W}^*$. Obviously $T_{2,7,7}\notin \mathcal{N}$.
Note that $v(T_{2,7,7})=3$ and $w(T_{2,7,7})=5 >0$ by~\Cref{ob:weight_small_graphs}.
Additionally, every nontrivial partition $\mathcal{P}$ satisfies $T_{2,7,7}/\mathcal{P}\notin\{\alpha K_2: \alpha\leq 8\}$ as $\delta(T_{2,7,7})=9$. Furthermore, we also know that $|\mathcal{P}|=2$ for every nontrivial partition $\mathcal{P}$; thus $T_{2,7,7}/\mathcal{P}\notin\{T_{a,b,c}:a+b+c\leq 16\}$.
Hence, $T_{2,7,7}$ is $\mathcal{N}$-good. Similarly, noting that the condition of $\delta\geq 9$ guarantees that $\mu\leq 7$ for $T_{a,b,c}\in \mathcal{W}^*$, we can verify that the other graphs in $\mathcal{W}^*$  are also $\mathcal{N}$-good. 

We also remark that every $\mathcal{N}$-good (or $\mathcal{S}$-good) graph satisfies the hypotheses of~\Cref{thm:main_statement_SZ9_k8}. Thus, this result will facilitate an inductive proof of Theorem~\ref{thm:23_homomorphism_to_C9-restated}$'$, with cases (1)--(3) invoking the inductive hypothesis, and (4) corresponding to the base case;
see~\Cref{sec:proofs of two theorems}.

For distinct vertices $v,x,y\in V(G)$, to \emph{lift} a pair of edges $xv,vy\in E(G)$, we delete edges $xv$ and $vy$, and add a new edge $xy$. Note that lifting edge pairs may reduce a graph's edge-connectivity. We call $(xu,uy)$ an \emph{edge-pair}, $(xu,uv,vy)$ an \emph{edge-triple}, and $(xu,uv,vw,wy)$ an \emph{edge-quadruple}; in each case, all vertices are distinct. 
Throughout our proofs 
we frequently contract and lift edges.   Contracting always preserves planarity, as does lifting when the edges lie consecutively along a face boundary, 
which will always be the case in our proofs.

Now we can state our main result.

\begin{theorem}\label{thm:main_statement_SZ9_k8}
	Given a planar graph $G$, if $G\neq K_1$, $w(G)\geq0$, and $G/\mathcal{P}\notin \mathcal{N}$ for every partition $\mathcal{P}$, then at least one of the following statements holds.
	\begin{enumerate}[label=(\arabic*)]
		\item\label{thm:1_main_statement_SZ9_k8} $G$ contains an $\mathcal{N}$-good proper subgraph.
		\item\label{thm:2_main_statement_SZ9_k8} $G$ admits some lifting such that the resulting graph $G'$ contains an $\mathcal{N}$-good subgraph $H$ and $G'/H$ is $\mathcal{S}$-good.
        \item\label{thm:3_main_statement_SZ9_k8} $G$ admits some lifting at a vertex $v$ such that for the resulting graph $G''$, we know $G''-v$ is $\mathcal{S}$-good, and $G''/\mathcal{P}\notin \mathcal{N}$ for every partition $\mathcal{P}$ of $V(G'')$.  
		\item\label{thm:4_main_statement_SZ9_k8} $v(G)\leq 4$.
	\end{enumerate}
\end{theorem}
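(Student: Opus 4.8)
The plan is to prove the statement by contradiction via a discharging argument. Assume $G$ is planar with $G\neq K_1$, $w(G)\geq 0$, and $G/\mathcal P\notin\mathcal N$ for every partition $\mathcal P$, and suppose that none of \ref{thm:1_main_statement_SZ9_k8}--\ref{thm:3_main_statement_SZ9_k8} holds; I would show this forces $v(G)\leq 4$. So assume in addition that $v(G)\geq 5$ and aim for a contradiction. Note first that $w(G)\geq 0$ forces $G$ to be connected (otherwise the partition into components has weight $42-23t<0$), so $v(G)\geq 2$; and applied to the trivial partition $\mathcal P_0$, the hypothesis gives $2e(G)\geq 23v(G)-42$, so $G$ is edge-dense. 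The strategy is to show that the absence of the three reducible configurations forces $G$ to be too sparse locally to be consistent with this density bound once planarity is taken into account.

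\emph{Step 1 (reducibility and structure lemmas).} First I would catalogue reducible small subgraphs: since $\alpha K_2$ is $\mathcal N$-good for every $\alpha\geq 8$ (it lies in $\mathcal W^*$ or is denser, using \Cref{ob:weight_small_graphs}, \Cref{ob:W*_N_good}, and \Cref{ob:partition_bound_notin}), and similarly for sufficiently heavy $T_{a,b,c}$ and for dense $4$-vertex graphs, the appearance of any of these as a \emph{proper} subgraph of $G$ triggers \ref{thm:1_main_statement_SZ9_k8}; since $v(G)\geq 5$ this already bounds the maximum edge-multiplicity of $G$ by $7$ and forbids heavy triangles and dense $4$-vertex subgraphs. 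Next I would treat vertices of small degree: given a vertex $v$ of degree below some threshold $d_0$, I would lift its incident edges in pairs consecutive along a face (which keeps the graph planar), reduce $v$ to degree $\leq 1$, and delete it; using \Cref{ob:partition_bound_notin} together with the global hypothesis $G/\mathcal P\notin\mathcal N$, I would verify that the resulting graph has nonnegative weight and avoids $\mathcal N\cup\mathcal W^*$ under every contraction while (with $v$ restored) retaining no contraction in $\mathcal N$ — exactly configuration \ref{thm:3_main_statement_SZ9_k8}; when $v$'s neighborhood is too clustered for this, contracting $v$ into part of its neighborhood, possibly after a lifting, instead produces an $\mathcal N$-good subgraph $H$ with $G'/H$ being $\mathcal S$-good, i.e. \ref{thm:2_main_statement_SZ9_k8}. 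Combining many such arguments yields a structure lemma: $G$ has minimum degree at least $d_0$, low-degree vertices are pairwise far apart, and no low-degree vertex is incident to a digon or other small face.

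\emph{Step 2 (discharging).} I would assign initial charges to the vertices and faces of $G$ — for instance $\mathrm{ch}(v)=2d(v)-23$ on vertices together with a charge on each face depending on its length — chosen so that, via Euler's formula $v(G)-e(G)+f(G)=2$ and the density bound $2e(G)\geq 23v(G)-42$, the total charge has a definite sign; this is where the constants $23$ and $42$ (equivalently, the requirement $c\leq 45$) enter. Then I would apply discharging rules redistributing charge between vertices and faces according to degrees and lengths, and use the structure lemma from Step 1 to show that after discharging every vertex and every face has charge of the opposite sign to the total — a contradiction, which proves $v(G)\leq 4$.

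The main obstacle is Step 1: proving that avoiding \ref{thm:1_main_statement_SZ9_k8}--\ref{thm:3_main_statement_SZ9_k8} really does force enough local sparsity. This is delicate for three reasons. First, lifting edge-pairs (and edge-triples and edge-quadruples) can lower edge-connectivity and decrease the weight, so one must choose precisely which consecutive edges to lift and then verify that $w$ stays $\geq 0$ — which is the whole reason for working with the weight function rather than with edge-connectivity directly. Second, every contraction and lifting must be checked against the exceptional families $\mathcal N$ and $\mathcal W^*$; this is the case analysis foreshadowed by \Cref{ob:weight_small_graphs}--\Cref{ob:partition_bound_notin}, and the graphs in $\mathcal W^*$, which satisfy the desired conclusion only barely, require separate and more careful treatment. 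Third, the discharging closes with essentially no slack (hence $c=42$ rather than anything larger), so the list of reducible configurations must be tight enough that every remaining vertex and face can be made nonnegative. A secondary technical point is that configuration \ref{thm:3_main_statement_SZ9_k8} asks that the deleted graph be $\mathcal S$-good while $G''$ keeps the ``no contraction in $\mathcal N$'' property, so the global hypothesis has to be threaded carefully through each reduction.
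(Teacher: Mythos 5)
Your skeleton (counterexample, reducible configurations obtained by lifting and contracting, then discharging from Euler's formula together with $2e(G)\geq 23v(G)-42$) does coincide with the paper's strategy, but the plan has genuine gaps at exactly the points you defer to "delicate verification." The central missing idea is the paper's Gap Lemma (\Cref{lem:partition_type_value}): in a counterexample avoiding \ref{thm:1_main_statement_SZ9_k8}, every nontrivial partition has weight far above $0$ (at least $9,16,18,20,25,32$ according to its type), proved by an induction over partition types via the refinement identity \eqref{equ:refinement}, $w_H(\mathcal{Q})=w_G(\mathcal{Q}\cup\mathcal{P}\setminus P_1)-w_G(\mathcal{P})+19$. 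This uniform slack (together with the edge-connectivity bounds of \Cref{lem:edge_connectivity} and \Cref{lem:edge_connectivity_14}, which are themselves consequences) is what absorbs the weight lost when a bounded number of edges are lifted or identified, and it is the only mechanism in sight for certifying ``$w\geq 0$ and no quotient in $\mathcal{N}\cup\mathcal{W}^*$'' after a reduction; your proposal gives no substitute for it, so none of the Step 1 verifications can actually be carried out. Relatedly, your remark that heavy triangles and dense $4$-vertex subgraphs trigger \ref{thm:1_main_statement_SZ9_k8} is correct but beside the point: the configurations that must be excluded for the discharging to close are the light ones --- $T_{1,1,7}$, $T_{2,2,6}$, $T_{3,3,5}$, $Q_{1,1,1,7}$, $V_{1,1,1,1,7}$, the variants $T^o_{1,1,7}$, $T^o_{2,2,6}$, $Q^o_{1,1,1,7}$, $Q^o_{6,6,6,7}$, $Q^{oo}_{6,6,6,7}$, $F$, and the planarity statement \Cref{lem:T_444} --- which have negative weight, are not $\mathcal{N}$-good subgraphs, and each require a bespoke lifting-and-contracting argument of type \ref{thm:2_main_statement_SZ9_k8} or \ref{thm:3_main_statement_SZ9_k8} checked against $\mathcal{N}\cup\mathcal{W}^*$.

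Two concrete missteps deserve naming. First, your low-degree reduction ``lift incident edges in pairs, reduce $v$ to degree $\leq 1$, and delete it'' cannot produce outcome \ref{thm:3_main_statement_SZ9_k8}: if $d_{G''}(v)\leq 7$ then the partition $\{\{v\},V(G'')\setminus\{v\}\}$ contracts $G''$ to some $\alpha K_2$ with $\alpha\leq 7\in\mathcal{N}$, violating the requirement $G''/\mathcal{P}\notin\mathcal{N}$, which is in the statement precisely so that \Cref{prop:SZ9_lifting}\ref{prop:SZ9_lifting_2} (needing degree at least $8$) applies downstream. The paper's \Cref{lem:14_lift} lifts only $\alpha$ pairs with $d_G(v)-2\alpha\geq 8$ and $d_G(v)-\alpha\leq 11$, and even then must dispose of a residual $8K_2$ case by a separate argument. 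Second, the structural target you aim the discharging at (minimum degree $d_0$, low-degree vertices pairwise far apart, vertex charges $2d(v)-23$) points the wrong way in this density regime: multiple edges are allowed, so planarity bounds nothing in terms of vertex degrees alone; the forced contradiction comes from faces, since \eqref{equ:discharge} pushes the average face degree below $\frac{46}{21}$, and the local structure one needs is the absence of the high-multiplicity small faces listed above. Accordingly the paper's discharging is face-only (Rules (A)--(C)), with the degree bound $\delta(G)\geq 14$ serving merely as a stepping stone for the forbidden-configuration lemmas; a vertex-separation structure lemma of the kind you describe would not suffice to finish.
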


As we mentioned above, \Cref{thm:main_statement_SZ9_k8} gives a short proof of Theorem~\ref{thm:23_homomorphism_to_C9-restated}$'$. In addition, it also yields the following result on circular colorings of signed planar graphs.

\begin{theorem}\label{thm:23_circular_flow_signed_planar_graphs}
	For every signed planar graph $(G,\sigma)$ of girth at least $23$, there exists an $\varepsilon=\varepsilon(G,\sigma)$ such that $(G,\sigma)$ has a circular $(\frac{16}{7}-\varepsilon)$-coloring. 
\end{theorem}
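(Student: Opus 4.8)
The plan is to reduce the circular-coloring statement for signed planar graphs to a flow statement on the planar dual, and then to use Theorem~\ref{thm:main_statement_SZ9_k8} exactly as it will be used for Theorem~\ref{thm:23_homomorphism_to_C9-restated}$'$. Recall (by planar–dual flow/coloring correspondence for signed graphs, as in Bouchet-type duality) that a signed planar graph $(G,\sigma)$ of girth at least $23$ has a circular $r$-coloring, for $r$ slightly below $\frac{16}{7}$, if and only if its dual signed graph admits a nowhere-zero circular flow of the dual value; and girth at least $23$ in $(G,\sigma)$ translates into the dual having odd-edge-connectivity at least $23$ (more precisely, every edge cut of ``odd type'' has at least $23$ edges), with the balanced/unbalanced distinction tracked by $\sigma$. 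So it suffices to prove: every signed planar graph $(H,\sigma)$ with odd-edge-connectivity at least $23$ admits the relevant modular-type orientation, which by the discussion in \Cref{subsec:strongly_Z9_connected} (the promised generalization of modular $9$-orientations) is governed by the same weight function $w$ and the same families $\mathcal{N}$, $\mathcal{W}^*$.

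First I would set up the orientation language: a circular $(\frac{16}{7}-\varepsilon)$-coloring corresponds, on the dual, to an orientation whose excess $d^+-d^-$ at each vertex lies in a prescribed residue class modulo $8$ (the $8=2k$ companion of the $9=2k+1$ case), with a sign twist on negative edges; this is the ``$\mathcal{S}_{\mathbb{Z}_9}$-connected'' / strongly-$\mathbb{Z}_9$-connected notion alluded to before \Cref{thm:main_statement_SZ9_k8}. The point of using $\varepsilon=\varepsilon(G,\sigma)$ rather than a fixed value is that, once such an orientation exists, a standard compactness/averaging argument (or the explicit bound coming from the modulus and the number of edges) upgrades it to a genuine circular $(\frac{16}{7}-\varepsilon)$-coloring for some $\varepsilon>0$ depending on the instance.

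Next I would run the induction. Take a minimal counterexample $(H,\sigma)$; as argued in \Cref{sec:discharging}, odd-edge-connectivity at least $23$ forces $w(H)\ge 42\ge 0$ and, because contraction only increases $w$, also $H/\mathcal{P}\notin\mathcal{N}$ for every partition $\mathcal{P}$. Hence Theorem~\ref{thm:main_statement_SZ9_k8} applies and one of \ref{thm:1_main_statement_SZ9_k8}--\ref{thm:4_main_statement_SZ9_k8} holds. In cases \ref{thm:1_main_statement_SZ9_k8}--\ref{thm:3_main_statement_SZ9_k8} I would argue, exactly as in the proof of Theorem~\ref{thm:23_homomorphism_to_C9-restated}$'$ in \Cref{sec:proofs of two theorems}, that an $\mathcal{N}$-good (respectively $\mathcal{S}$-good) subgraph is a reducible configuration: contract it, apply the inductive hypothesis to the smaller signed planar graph, and extend the orientation across the contracted/lifted part using the connectivity guaranteed by $\mathcal{N}$-goodness (and, for the $\mathcal{W}^*$ graphs, the delicate $\mathcal{S}$-good handling). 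The only genuinely new ingredient compared to the $C_9$ statement is bookkeeping the sign $\sigma$ through contraction and lifting — lifting a negative–positive edge pair produces a negative edge, and contracting an unbalanced part must be handled by first switching — but this is routine. In case \ref{thm:4_main_statement_SZ9_k8}, $v(H)\le 4$, and one checks by hand (using \Cref{ob:weight_small_graphs} and the classification of small graphs referenced in \Cref{lem:SZ9_small_graphs}) that every such signed planar graph with $w\ge 0$, $\notin\mathcal{N}$, has the desired orientation; this is the finite base case.

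I expect the main obstacle to be the dual translation itself: making precise the correspondence ``girth $\ge 23$ in $(G,\sigma)$ $\Longleftrightarrow$ dual has odd-edge-connectivity $\ge 23$'' together with ``circular $(\frac{16}{7}-\varepsilon)$-coloring $\Longleftrightarrow$ dual orientation with the right modular excess,'' since for signed graphs the flow/coloring duality is more subtle than in the ordinary case (one must track balanced versus unbalanced cycles, and the value $\frac{16}{7}$ rather than $\frac{9}{4}$ reflects that the relevant modulus on the coloring side is shifted). Once that dictionary is in place, the rest is a faithful replay of the $C_9$ argument with $\sigma$ carried along, so I would state the dual formulation as a separate lemma, prove it carefully, and then keep the inductive argument short by pointing to the corresponding steps in the proof of Theorem~\ref{thm:23_homomorphism_to_C9-restated}$'$.
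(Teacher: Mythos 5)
Your top-level plan (dualize to a flow statement for a $23$-edge-connected signed planar graph and then invoke \Cref{thm:main_statement_SZ9_k8} through a contraction/lifting induction) matches the paper's outline, but the middle layer of your argument—what orientation object is needed, how the signature enters, and where the strict inequality comes from—is the wrong mechanism, and this is a genuine gap rather than a presentational difference. The paper never carries $\sigma$ through the induction and never uses a ``modulo $8$ orientation with a sign twist.'' Instead it works with the \emph{unsigned doubled} graph $2G$ and the family $\mathcal{SC}_{16}$ of graphs admitting a \emph{strongly connected} $(32,\beta)$-orientation for every $\mathbb{Z}_{32}$-pc-boundary $\beta$; the signature is encoded once and for all in the single boundary $\beta(v)\equiv 16\cdot p^+(v)\pmod{32}$, and \Cref{thm:main_flow_orientation} converts such an orientation of $2G$ into a circular flow of $(G,\sigma)$ of value strictly below $\frac{16}{7}$. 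The strictness is not obtained by any ``compactness/averaging'' upgrade (which is not an argument here, since a $(32,\beta)$-orientation alone only yields $\phi_c\le\frac{16}{7}$): it comes from the tight-cut \Cref{lem:tight_cut}, because a flow of value exactly $\frac{16}{7}$ would force all doubled edges across some cut to point one way, contradicting strong connectivity of the orientation. Note also that ``strongly $\mathbb{Z}_9$-connected'' (all boundaries realizable) and ``strongly connected orientation'' (digraph connectivity) are different notions; your proposal conflates them, and the framework you describe would at best reproduce the $\frac94$-flow statement, not $\phi_c<\frac{16}{7}$.

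Two further concrete problems follow from this. First, because strong connectivity of the orientation must be preserved through every contraction and lifting, the induction cannot simply replay the $\mathcal{SZ}_9$ argument ``with $\sigma$ carried along'': the paper needs the weakly contractible family $\mathcal{W}_{16}$, \Cref{prop:H_Wk_SCk}, \Cref{prop:SC_SZ_W_graphs}, \Cref{prop:lifting_WSC16}, and the small-graph results of \Cref{lem:WSC_small_graphs} (Hamiltonian cycle/path deletions into $\mathcal{SZ}_{16}$), none of which appear in your plan. Second, your base case is stated incorrectly: it is not true that every graph on at most $4$ vertices with $w\ge 0$ and not in $\mathcal{N}$ works. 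The correct statement, \Cref{thm:partition_SZ9_SC16}\ref{thm:partition_SC16}, must also exclude $\mathcal{W}^*$ (the paper notes that $2G\notin\mathcal{SC}_{16}$ for every $G\in\mathcal{W}^*\cup\mathcal{N}$, e.g.\ $G=8K_2$), and this exclusion is exactly what $\mathcal{S}$-goodness provides; for $23$-edge-connected $G$ it is available via \Cref{ob:partition_bound_notin}\ref{ob:1_14_WN}, but your induction, modeled on the proof of Theorem~\ref{thm:23_homomorphism_to_C9-restated}$'$ which only excludes $\mathcal{N}$, does not secure it. (A minor point: girth at least $23$ dualizes to full $23$-edge-connectivity of the dual, not merely to large ``odd-type'' cuts, and the paper uses the full edge-connectivity.)
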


In fact, we will prove the result in its dual version: for every $23$-edge-connected signed planar graph $(G,\sigma)$, there is an $\varepsilon=\varepsilon(G,\sigma)$ such that $(G,\sigma)$ admits a circular $(\frac{16}{7}-\varepsilon)$-flow.

In the next sections we present the definitions and notation needed to formally state and prove our remaining results. In~\Cref{subsec:strongly_Z9_connected}, we introduce a more general definition of orientations. 
This generality allows us to transform the problem of circular flow in planar graphs into one about group connectivity. 
In~\Cref{subsec:strongly_connected_32beta_orientations}, we introduce definitions and results about 
circular flows in signed planar graphs. 
In~\Cref{sec:proofs of two theorems} we use the content of~\Cref{subsec:strongly_Z9_connected} 
and \Cref{subsec:strongly_connected_32beta_orientations} to prove Theorem~\ref{thm:23_homomorphism_to_C9-restated}$'$ 
and \Cref{thm:23_circular_flow_signed_planar_graphs}, assuming the truth of \Cref{thm:main_statement_SZ9_k8}.
Finally, \Cref{sec:main theorem} is the heart of our work. There we prove \Cref{thm:main_statement_SZ9_k8}. 
We study the properties of a minimum counterexample and use the 
discharging method to obtain a contradiction, thus finishing the proof.

\section{Preliminaries}\label{sec:preliminary}

Throughout this paper we consider graphs with multiple edges but no loops. For a graph $G=(V,E)$, we write $v(G)$ and $e(G)$ for its numbers of vertices and edges. Let $\delta(G)$ be the minimum degree of $G$. For disjoint vertex subsets $X$ and $Y$ of $V(G)$, we write $[X,Y]$ for the set of edges in $G$ with one endpoint in $X$ and the other endpoint in $Y$, and let $e_{G}(X,Y):=|[X,Y]|$. Given a vertex subset $X$ of $G$, let $d(X):=|[X,X^c]|$. Further, let $\mu_{G}(uv)$ be the number of parallel edges between vertices $u$ and $v$. The
\emph{multiplicity} of $G$ is denoted by $\mu(G)$, where $\mu(G):=\max_{uv\in E(G)}\mu_G(uv)$. We use $kG$ to denote the graph formed by replacing every edge in $G$ with $k$ parallel edges. The \emph{odd-girth} of a graph $G$
is the length of its shortest odd cycle. Let $D$ be an orientation of $G$. An ordered pair $(u,v)$ denotes a directed 
edge $u\to v$. And $d_{D}^-(v)$ and $d_{D}^+(v)$ denote respectively the numbers of edges directed into and out of $v$. We call $D$ a \emph{strongly connected} orientation on $G$ if $d_D^+(S)>0$ and $d_D^-(S)>0$ for every proper subset $S\subset V(G)$.

Given a plane graph $G$, let $F(G)$ be the set of faces of $G$ and $f(G):=|F(G)|$. Given a face $f\in F(G)$, the \emph{degree} of $f$, denoted $d(f)$, is the number of edges with which $f$ is incident. A \emph{$k$-face} $f$ is a face with $d(f)=k$ and a \emph{$k^+$-face} $f$ is a face with $d(f)\geq k$. If two faces are incident with a common edge, then we call them \emph{adjacent}. Moreover, two faces $f$ and $f'$ are \emph{weakly adjacent} if there is a face chain $ff_1\cdots f_tf'$, where $f_i$ is a $2$-face for every $i\in\{1,\ldots,t\}$. 

Let $\mathcal{P}$ be a partition of $V(G)$ with parts $P_1,P_2,\ldots,P_t$ and $p_i:=|P_i|$ for each part. If $p_i\geq 2$ for some $i$, then we call $\mathcal{P}$ a \emph{nontrivial} partition; otherwise, $\mathcal{P}$ is \emph{trivial}. Throughout, we exclude the partition with a single part $\{V(G)\}$. In this work, we say a partition $\mathcal{P}$ has \emph{type $(k_1, k_2, \ast)$} (or type $(k_1^+, k_2^+, \ast)$) if $\mathcal{P}=\{P_1, P_2, \ldots, P_t\}$ satisfies that $|P_1|\geq |P_2|\geq \cdots \geq |P_t|$ and $|P_1|=k_1$ and $|P_2|=k_2$ (or $|P_1|\geq k_1$ and $|P_2|\geq k_2$, respectively). 

Let $G$ be a graph and $H$ be a connected subgraph of $G$. We contract $H$ to form a new vertex $w$. 
Let $\mathcal{P'}$ be a partition of $V(G/H)$ with parts $P_1, P_2, \ldots, P_t$. If $w\in P_1$, then we denote by $\mathcal{P}$ the partition \{$(P_1\setminus\{w\})\cup V(H), P_2, \ldots, P_t$\}; we call $\mathcal{P}$ the \emph{corresponding} partition of $V(G)$.

Given a graph $G=(V,E)$ and a mapping $\sigma: E(G)\to\{+1,-1\}$, the pair $(G,\sigma)$ is called a \emph{signed graph}. An edge is called \emph{positive} (or \emph{negative}) if $\sigma(e)=+1$ ($\sigma(e)=-1$, respectively).

\subsection{Strongly \texorpdfstring{$\mathbb{Z}_9$}{Z9}-connected graphs}\label{subsec:strongly_Z9_connected}

A modular $k$-orientation is a special case of the general notion of a $(\mathbb{Z}_{k},\beta)$-orientation, defined as follows. Given a mapping $\beta: V(G)\to \mathbb{Z}_k$, if $\sum_{v\in V(G)}\beta(v)\equiv 0 \pmod{k}$, then we call $\beta$ a 
\emph{$\mathbb{Z}_{k}$-boundary}. Given a $\mathbb{Z}_{k}$-boundary $\beta$ of $G$, a \emph{$(\mathbb{Z}_{k},\beta)$-orientation} is an orientation $D$ such that $d_D^+(v)-d_D^-(v)\equiv \beta(v)\pmod{k}$ for every vertex $v\in V(G)$. A graph $G$ is \emph{strongly $\mathbb{Z}_{k}$-connected} if for any $\mathbb{Z}_{k}$-boundary $\beta$, the graph $G$ has a $(\mathbb{Z}_{k},\beta)$-orientation. 
Clearly $\beta(v):=0$ for every $v$ is a $\mathbb{Z}_{k}$-boundary. Thus, every
strongly $\mathbb{Z}_9$-connected graph must have a modular $9$-orientation.

Given a positive odd integer $k$, we denote by $\mathcal{SZ}_{k}$ the family of all graphs that are strongly $\mathbb{Z}_k$-connected. Moreover, we have the following proposition about lifting and contracting for graphs in $\mathcal{SZ}_{k}$, which we will use frequently.

\begin{proposition}\label{prop:SZ9_lifting} 
    {\em{\cite{CL20}}}
	Let $G$ be a graph and $v\in V(G)$. Let $G'$ be a graph formed from $G$ by lifting some edge pairs at $v$ and $G''$ be formed from $G'$ by deleting the vertex $v$.
	\begin{enumerate}[label=(\roman*)]
		\item\label{prop:SZ9_contract} For any connected subgraph $H$ of $G$, if $H$ and $G/H$ belong to $\mathcal{SZ}_{k}$, then $G\in\mathcal{SZ}_{k}$.
		
		\item\label{prop:SZ9_lifting_1} For any connected subgraph $H$ of $G'$, if $H$ and $G'/H$ belong to $\mathcal{SZ}_{k}$, then $G\in\mathcal{SZ}_{k}$.
		
		\item \label{prop:SZ9_lifting_2} If $d_{G'}(v)\geq k-1$ and $G''\in\mathcal{SZ}_{k}$, then $G'\in\mathcal{SZ}_{k}$ and hence $G\in\mathcal{SZ}_{k}$.
        \item \label{prop:SZ9_contract_modular} For any connected subgraph $H$ of $G$, if $H\in\mathcal{SZ}_{k}$ and $G/H$ admits a modular $k$-orientation, then $G$ has a modular $k$-orientation.
	\end{enumerate}
\end{proposition}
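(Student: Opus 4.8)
The plan is to handle all four parts with one device: given a prescribed $\mathbb{Z}_k$-boundary, orient most of the graph using the hypothesis, then repair the leftover edges (those inside $H$, or those at $v$) by feeding a suitably corrected boundary into the strongly $\mathbb{Z}_k$-connected piece. Alongside this I would isolate one routine auxiliary claim, call it \emph{unlifting}: if $\widehat{G}$ is obtained from $G$ by lifting an edge pair $(xv,vy)$ and $\widehat{G}\in\mathcal{SZ}_k$, then $G\in\mathcal{SZ}_k$. To see it, note a $\mathbb{Z}_k$-boundary $\beta$ of $G$ is also one of $\widehat{G}$ (lifting changes neither the vertex set nor $\sum_u\beta(u)$), so $\widehat{G}$ has a $(\mathbb{Z}_k,\beta)$-orientation; now replace the oriented edge $xy$ by the oriented path $x\to v\to y$ or $y\to v\to x$ agreeing with its direction. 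This leaves $d^+-d^-$ unchanged at $x$ and at $y$ and contributes $0$ to it at $v$, giving a $(\mathbb{Z}_k,\beta)$-orientation of $G$; iterating handles any set of lifts at a single vertex, which is exactly the step from $G'$ to $G$ in parts (ii) and (iii).

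For part (i), fix a $\mathbb{Z}_k$-boundary $\beta$ of $G$, let $w$ be the image of $H$ in $G/H$, and set $\beta'(w):=\sum_{u\in V(H)}\beta(u)$ and $\beta'(z):=\beta(z)$ for $z\neq w$; then $\sum\beta'\equiv0\pmod k$, so $G/H\in\mathcal{SZ}_k$ yields a $(\mathbb{Z}_k,\beta')$-orientation. This orients every edge of $G$ with at most one endpoint in $V(H)$, already giving each $z\notin V(H)$ its correct net degree modulo $k$. Writing $a(u)$ for the net (out-minus-in) degree contributed to $u\in V(H)$ by the already-oriented edges leaving $V(H)$, I would put $\beta_H(u):=\beta(u)-a(u)$; since $\sum_{u\in V(H)}a(u)$ is the net flow out of $V(H)$, equal to $d^+(w)-d^-(w)\equiv\beta'(w)=\sum_{u\in V(H)}\beta(u)\pmod k$, the sum $\sum_{u\in V(H)}\beta_H(u)$ vanishes modulo $k$, so $\beta_H$ is a $\mathbb{Z}_k$-boundary of $H$. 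Applying $H\in\mathcal{SZ}_k$ and taking the union of the two orientations gives a $(\mathbb{Z}_k,\beta)$-orientation of $G$. Part (iv) is this argument run with $\beta\equiv0$: then $\beta'\equiv0$, so a modular $k$-orientation of $G/H$ is exactly a $(\mathbb{Z}_k,\beta')$-orientation, and only $H\in\mathcal{SZ}_k$ (not $G/H\in\mathcal{SZ}_k$) is needed to complete it into a modular $k$-orientation of $G$. Part (ii) follows by applying part (i) to $G'$ and then invoking unlifting.

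For part (iii), fix a $\mathbb{Z}_k$-boundary $\beta$ of $G'$ and put $d:=d_{G'}(v)$. Because $k$ is odd, $2$ is invertible modulo $k$, so there is a unique $j\in\{0,1,\dots,k-1\}$ with $2j-d\equiv\beta(v)\pmod k$, and the hypothesis $d\ge k-1$ forces $j\le d$. Orient $j$ of the $d$ edges at $v$ out of $v$ and the other $d-j$ into $v$ (the choice of which ones is immaterial); then $d^+(v)-d^-(v)=2j-d\equiv\beta(v)$ by construction. Setting $\beta''(u):=\beta(u)-a(u)$ for $u\in V(G'')$, where $a(u)$ is the net degree the $v$-edges contribute to $u$, one checks as in part (i) that $\beta''$ is a $\mathbb{Z}_k$-boundary of $G''$: here $\sum_{u}a(u)=d^-_{G'}(v)-d^+_{G'}(v)\equiv-\beta(v)$ while $\sum_{u\in V(G'')}\beta(u)\equiv-\beta(v)$. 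Now $G''\in\mathcal{SZ}_k$ supplies a $(\mathbb{Z}_k,\beta'')$-orientation, which together with the chosen orientation at $v$ is a $(\mathbb{Z}_k,\beta)$-orientation of $G'$; hence $G'\in\mathcal{SZ}_k$, and $G\in\mathcal{SZ}_k$ by unlifting.

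Since this is a known result (from~\cite{CL20}), I do not expect a genuine obstacle. The points needing care are the two sign-and-sum verifications---that the corrected boundaries $\beta_H$ and $\beta''$ sum to $0$ modulo $k$, which is precisely what makes strong $\mathbb{Z}_k$-connectivity of $H$, respectively $G''$, applicable---and the small arithmetic observation driving part (iii): it is exactly the combination ``$k$ odd'' and ``$d_{G'}(v)\ge k-1$'' that guarantees the required value of $d^+(v)-d^-(v)$ is realizable by an in/out split of the available edges. A minor bookkeeping point, easily dispatched, is that one may assume $H$ is an induced subgraph of $G$, since $\mathcal{SZ}_k$ is closed under adding parallel edges.
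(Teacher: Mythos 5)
Your proposal is correct and follows exactly the route the paper indicates (and omits as routine, citing \cite{CL20}): fix a desired $\mathbb{Z}_k$-boundary for $G$, take the guaranteed orientation of the smaller graph ($G/H$, $G'/H$, or $G''$), and extend it by feeding a corrected boundary into the strongly $\mathbb{Z}_k$-connected piece, with the unlifting step (subdividing the oriented lifted edge back through $v$) and the in/out split at $v$ using $d_{G'}(v)\ge k-1$ being precisely the ``obvious way'' details the paper leaves out. The only nitpick is your closing remark: the edges of $G$ inside $V(H)$ that are not in $E(H)$ need not be parallel to edges of $H$, but the same one-line argument shows $\mathcal{SZ}_k$ is closed under adding \emph{any} edge between existing vertices, so the reduction to $H$ induced stands.
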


All parts of the previous proposition have similar proofs. In each case we fix a desired boundary $\beta$ for $G$, start with the guaranteed orientation $D$ (with some corresponding boundaries) for the smaller graph ($G/H$, $G'/H$, or $G''$), and then extend $D$ to the desired 
$(\mathbb{Z}_{k},\beta)$-orientation for $G$ in the obvious way. So we omit the details.

We will often want to know that various graphs of small order are strongly $\mathbb{Z}_9$-connected.
For easy reference, we combine below some results from~\cite{LSWW23} and consequences of~\Cref{prop:SZ9_lifting}\ref{prop:SZ9_contract}.

\begin{lemma}\label{lem:SZ9_small_graphs}
    The following statements about graphs in $\mathcal{SZ}_9$ hold.
    \vspace{-.08in}
\begin{enumerate}[itemsep=-2pt, label=(\roman*)]
		\item\label{lem:4small_SZ_9} {\rm \cite{LSWW23}} The graphs in $\{\alpha K_2:\alpha\geq8\}\cup\{T_{a,b,c}:a+b+c\geq16,\delta\geq8\}\cup\{G\colon v(G)=4, e(G)\geq25,\delta(G)\geq8, \mu(G)\leq 7\}$ belong to $\mathcal{SZ}_{9}$. 

        \item Every multipath on three vertices with each edge's multiplicity at least $8$ is in $\mathcal{SZ}_{9}$.

        \item Let $G$ be a multigraph on four vertices containing $\alpha K_2$ where $\alpha\geq 8$. If $G/\alpha K_2$ has at least $16$ edges and $\delta\geq 8$, then $G$ is in $\mathcal{SZ}_{9}$.
\end{enumerate}
\end{lemma}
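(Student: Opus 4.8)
The plan is to treat the three parts as increasingly specific consequences of part \ref{lem:4small_SZ_9} (which is cited from \cite{LSWW23}) combined with the contraction principle \Cref{prop:SZ9_lifting}\ref{prop:SZ9_contract}. For part (i) there is nothing to prove beyond invoking the cited source. So the real work is in parts (ii) and (iii), and in both cases the strategy is to exhibit a connected subgraph $H$ such that both $H$ and the contraction are already known to be strongly $\mathbb{Z}_9$-connected, and then quote \Cref{prop:SZ9_lifting}\ref{prop:SZ9_contract}.

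For part (ii), let $G$ be a multipath on vertices $x,y,z$ with edge sets of multiplicities $a=\mu(xy)\ge 8$ and $b=\mu(yz)\ge 8$ (and no $xz$ edges). I would take $H$ to be the subgraph induced on $\{x,y\}$, i.e. $H=aK_2$. Since $a\ge 8$, part (i) gives $H\in\mathcal{SZ}_9$. Contracting $H$ identifies $x$ with $y$, leaving the two-vertex graph $bK_2$ on $\{xy, z\}$; since $b\ge 8$, part (i) again gives $G/H\in\mathcal{SZ}_9$. Now \Cref{prop:SZ9_lifting}\ref{prop:SZ9_contract} yields $G\in\mathcal{SZ}_9$. (One should note that $K_1$, the trivial one-vertex graph, is vacuously in $\mathcal{SZ}_9$, which covers the edge case where some multiplicity is treated as a single merged vertex; but here both multiplicities are at least $8$, so this is not an issue.)

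For part (iii), let $G$ be on four vertices and contain a copy of $\alpha K_2$ with $\alpha\ge 8$, say on vertices $x,y$; set $H=\alpha K_2$. By part (i), $H\in\mathcal{SZ}_9$. The contraction $G/H$ is a graph on three vertices (the merged vertex $w=xy$, plus the other two vertices), and by hypothesis $e(G/H)\ge 16$ and $\delta(G/H)\ge 8$. I would like to conclude $G/H\in\mathcal{SZ}_9$ directly from part (i); the family listed there includes $\{T_{a,b,c}:a+b+c\ge 16,\ \delta\ge 8\}$, i.e. three-vertex graphs with all three vertex pairs present. The potential subtlety is whether $G/H$ actually has all three pairs of vertices joined by an edge: if some pair is missing then $G/H$ is a multipath on three vertices rather than a genuine $T_{a,b,c}$. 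But in that case, since $\delta(G/H)\ge 8$ forces each present edge-class to have multiplicity at least $8$, part (ii) of this very lemma applies and gives $G/H\in\mathcal{SZ}_9$ anyway. Either way $G/H\in\mathcal{SZ}_9$, and then \Cref{prop:SZ9_lifting}\ref{prop:SZ9_contract} finishes the proof that $G\in\mathcal{SZ}_9$.

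The main obstacle I anticipate is purely bookkeeping: making sure the hypotheses ($\delta\ge 8$, $e\ge 16$, and the bound $\mu\le 7$ that appears in the four-vertex family of part (i)) survive contraction and match exactly the conditions under which \cite{LSWW23} and part (ii) apply — in particular, checking that a missing edge-pair after contraction lands us in the multipath case of part (ii) and not outside the scope of the lemma, and confirming that the $\mu\le 7$ restriction in part (i) is not needed for the three-vertex family $T_{a,b,c}$ (it is not, since there $\delta\ge 8$ already governs the relevant graphs). None of this requires new ideas; it is a short verification that the contracted graphs land in families already handled.
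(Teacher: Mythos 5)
Your proposal is correct and follows essentially the same route as the paper: contract the $\alpha K_2$ (respectively one multiedge class of the path), note that both the contracted subgraph and the quotient lie in $\mathcal{SZ}_9$ by part (i) (or part (ii) in the degenerate case), and conclude via \Cref{prop:SZ9_lifting}\ref{prop:SZ9_contract}. Your extra care about the quotient in (iii) possibly being a multipath rather than a $T_{a,b,c}$ is exactly the point the paper's one-line proof leaves implicit.
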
 

For $(ii)$ and $(iii)$, we contract $8K_2$ which is in $\mathcal{SZ}_9$ and note that the resulting graph is still in $\mathcal{SZ}_9$, by \Cref{prop:SZ9_lifting}\ref{prop:SZ9_contract}, we are done.

\subsection{Strongly connected \texorpdfstring{$(32,\beta)$}{(32,B)}-orientations of graphs}\label{subsec:strongly_connected_32beta_orientations}

We also have an equivalent definition about strongly $\mathbb{Z}_{k}$-connected graphs, which is introduced in~\cite{LWZ20}.
\begin{definition}\label{def:(2k,beta)-boundary_orientation}
    \begin{enumerate}[label=(\roman*)]
        \item Given a graph $G$ and a mapping $\beta:V(G)\to\{0,\pm 1,\ldots,\pm k\}$, if $\beta(v)\equiv d(v)\pmod{2}$ for every vertex $v$ and $\sum_{v\in V(G)}\beta(v)\equiv 0\pmod{2k}$, then we call the mapping $\beta$ {\emph{a parity-compliant $\mathbb{Z}_{2k}$-boundary} (or \emph{$\mathbb{Z}_{2k}$-pc-boundary}, for short).}
        \item Given a {$\mathbb{Z}_{2k}$-pc-boundary} $\beta$ of $G$, if there exists an orientation $D$ such that $d_{D}^+(v)-d_{D}^-(v)\equiv\beta(v)\pmod{2k}$ for every vertex $v$, then we call $D$ a \emph{$(2k,\beta)$-orientation}.
    \end{enumerate}
\end{definition}

It has been proved in~\cite{LLL17} that a graph $G$ is strongly $\mathbb{Z}_{k}$-connected if and only if for every {$\mathbb{Z}_{2k}$-pc-boundary} $\beta$ of $G$, the graph $G$ admits a $(2k,\beta)$-orientation. Moreover, if a $(2k,\beta)$-orientation $D$ is strongly connected, then we call $D$ a \emph{strongly connected $(2k,\beta)$-orientation}. Using these definitions, we now define a new graph family, which is related to circular flows in signed graphs. 

\begin{definition}\label{def:strongly_connected_graph_family}
	Let $\mathcal{SC}_k$ be the family of all graphs that have a strongly connected $(2k,\beta)$-orientation for every {$\mathbb{Z}_{2k}$-pc-boundary} $\beta$.
\end{definition}

\begin{definition}\label{def_circular_flow}{\em \cite{LNWZ22}}
	Given positive integers $p$ and $q$ with $p$ being even, a \emph{circular $\frac{p}{q}$-flow} in a signed graph $(G,\sigma)$ is a pair $(D, f)$ where $D$ is an orientation on $G$ and $f: E(G)\to \mathbb{Z}$ satisfies the following conditions. 
	\begin{enumerate}[label=(\roman*)]
		\item For each positive edge $e$, $|f(e)|\in\{q,\ldots,p-q\}$;
		\item For each negative edge $e$, $|f(e)|\in\{0,\ldots,\frac{p}{2}-q\}\cup \{\frac{p}{2}+q,\ldots,p-1\}$;
		\item For each vertex $v$, we have $\sum\limits_{(v,w)\in D}f(vw)-\sum\limits_{(u,v)\in D}f(uv)\equiv 0\pmod{p}$.
	\end{enumerate}
 The \emph{circular flow index} of $(G,\sigma)$, denoted by $\phi_c(G,\sigma)$, is the minimum value $\frac{p}{q}$ such that $(G,\sigma)$  admits a circular $\frac{p}{q}$-flow. 
\end{definition}

For easy reference, let $\partial_{\!\scriptscriptstyle D}f(v):=\sum_{(v,w)\in D}f(vw)-\sum_{(u,v)\in D}f(uv)$. What follows is the strongest
known general result about circular flow value in a signed graph being less than $\frac{2k}{k-1}$.

\begin{theorem}\label{thm:circular_flow_results_in_signed_graph}
    {\em{\cite{LNWZ22}}} Given a signed graph $(G, \sigma)$ and an integer $k\geq 2$, if $G$ is $3k$-edge-connected, then $\phi_c(G,\sigma)<\frac{2k}{k-1}$.
\end{theorem}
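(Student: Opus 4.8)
The plan is to translate the statement into a modular‑orientation problem on the underlying graph $G$, solve that problem using the Lov\'asz--Thomassen--Wu--Zhang orientation machinery together with a $T$-join packing, and then use strong connectivity of the orientation to obtain the strict inequality (with an $\varepsilon$ that may depend on $(G,\sigma)$).

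First I would reformulate. Fix an even integer $p$ and an integer $q$ with $1\le q<\tfrac p2$, and put $r:=\tfrac p2-q$; then $\tfrac pq<\tfrac{2k}{k-1}$ is equivalent to $p>2kr$. Unwinding \Cref{def_circular_flow} and reducing modulo $p$, a circular $\tfrac pq$-flow of $(G,\sigma)$ is the same as an orientation $D$ of $G$ and a map $f$ with $\partial_D f(v)\equiv0\pmod p$ for every $v$, where $f(e)\in\{-r,\dots,r\}$ on each negative edge and $f(e)\in\tfrac p2+\{-r,\dots,r\}$ on each positive edge; exactly as for ordinary graphs, a $\mathbb{Z}_p$-flow of this shape lifts to an honest integer-valued flow in these ranges. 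In particular the coarse shape of $f$ is \emph{forced}: every positive edge must carry a value near $\tfrac p2$ and every negative edge a value near $0$.

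Next I would peel off this coarse part. Writing $f=\tfrac p2 h+g$, with $h$ the indicator of the positive edges, one gets $g(e)\in\{-r,\dots,r\}$ for all $e$; insisting that $g(e)\in\{-1,0,1\}$ makes $g$ the $\pm1$-orientation of some spanning sub(multi)graph $S\subseteq G$, and the flow condition becomes $d^{+}_{S}(v)-d^{-}_{S}(v)\equiv\tfrac p2\,c(v)\pmod p$, where $c(v):=d_{\mathrm{pos}}(v)\bmod2$. Comparing parities forces $S$ to lie in a prescribed class — a $T$-join for $T:=\{v:d_{\mathrm{pos}}(v)\text{ odd}\}$, or an even subgraph, according to the parity of $\tfrac p2$ — and at each vertex of $T$ the chosen orientation of $S$ must have imbalance $\pm\tfrac p2$, hence $d_{S}(v)\ge\tfrac p2$ there. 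Taking $p=2k+2$ (so $r=1$, $\tfrac p2=k+1$), this amounts to demanding roughly $k$ consistently oriented edge-disjoint $T$-joins inside $G$, with the remaining edges given an essentially balanced orientation. Since $G$ is $3k$-edge-connected it has $\lfloor 3k/2\rfloor\ge k+1$ edge-disjoint spanning trees by Nash--Williams--Tutte, each of which contains a $T$-join; and a refinement of the Lov\'asz--Thomassen--Wu--Zhang orientation machinery (whose edge-connectivity threshold for an orientation problem of modulus $\Theta(k)$ is $\Theta(k)$, and here at most $3k$), applied with these $T$-joins set aside, supplies an orientation realizing the required boundary. This already yields a circular $\tfrac pq$-flow with $\tfrac pq=\tfrac{2k+2}{k}<\tfrac{2k}{k-1}$. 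The $3k$-edge-connectivity also leaves enough room to arrange that the underlying orientation is strongly connected, after which adding a small circulation along directed cycles moves every flow value strictly into the interior of its interval, giving $\phi_c(G,\sigma)<\tfrac{2k}{k-1}$ with a graph-dependent gap $\varepsilon$.

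The hard part is making the bookkeeping in the last step fit: the single budget of $3k$ edges at each vertex has to simultaneously (i) meet the connectivity threshold of the LTWZ orientation theorem for the relevant modulus, (ii) accommodate the $T$-join packing that realizes the forced imbalances $\pm\tfrac p2$ at the parity-defect vertices in $T$, and (iii) still leave a connected spanning subgraph free to force strong connectivity. Verifying that $3k$ — rather than a larger multiple of $k$ — suffices for all three demands at once, and handling the signed-graph subtlety in the lifting of Step 1 (crossing a negative edge reverses the sign of a flow shift, so the $T$-join pairing must be compatible with switching classes), is where essentially all of the work lies.
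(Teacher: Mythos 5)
There is a genuine gap, and it sits exactly where you park it at the end. Your plan upgrades the theorem to a \emph{uniform} bound: with $p=2k+2$, $r=1$, you need a map $g\colon E(G)\to\{-1,0,1\}$ with $\partial g(v)\equiv (k+1)\,d_{\mathrm{pos}}(v)\pmod{2k+2}$, i.e.\ a sub-orientation whose imbalance is $\pm(k+1)$ (half the modulus, the most demanding boundary value) at every parity-defect vertex. That is an LTWZ-type boundary-orientation problem with modulus $2k+2$, and the known edge-connectivity threshold for such problems is on the order of $3p\approx 6k$, not $3k$; allowing zero values does not visibly help, since each vertex of $T$ still needs at least $k+1$ consistently oriented edges of the support and the constraints are global. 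Packing $\lfloor 3k/2\rfloor$ spanning trees and extracting $T$-joins gives edge-disjoint subgraphs of odd degree at $T$, but nothing forces their orientations to contribute imbalance of the \emph{same sign} $k+1$ at every $T$-vertex simultaneously, and no "refinement of the LTWZ machinery" applicable at connectivity $3k$ for modulus $2k+2$ is identified. So the step you defer is not bookkeeping; it is the whole theorem, and in a strengthened form: you would be proving that every $3k$-edge-connected signed graph has a circular $\frac{2k+2}{k}$-flow, a fixed gap below $\frac{2k}{k-1}$. The phrasing of the statement (and of \Cref{thm:23_circular_flow_signed_planar_graphs}, where $\varepsilon=\varepsilon(G,\sigma)$) is a warning sign that no such uniform improvement is available from this hypothesis.

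For comparison, the intended route (this paper only cites the result, but its own machinery mirrors the proof) is quite different: one works in $2G$, which is $6k$-edge-connected, so the doubled edges carry values in $\{-2,0,2\}$ and the relevant orientation problem has an effective modulus small enough that LTWZ-type results apply; one obtains a \emph{strongly connected} $(4k,\beta)$-orientation of $2G$ with $\beta(v)\equiv 2k\cdot p^+(v)\pmod{4k}$, which as in \Cref{thm:main_flow_orientation} yields a circular $\frac{2k}{k-1}$-flow, and the strict inequality then comes not from a better flow value but from \Cref{lem:tight_cut}: if $\phi_c(G,\sigma)=\frac{2k}{k-1}$, every optimal flow has a tight cut, whereas the constructed flow has none because the orientation is strongly connected; since $\phi_c$ is attained, this forces $\phi_c(G,\sigma)<\frac{2k}{k-1}$ with a graph-dependent gap. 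If you want to salvage your approach, you must either prove the $\{-1,0,1\}$-valued boundary lemma at connectivity $3k$ (which I do not believe follows from any known orientation theorem) or redirect the argument through the doubled graph and the tight-cut mechanism as above.
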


As stated in \Cref{thm:23_circular_flow_signed_planar_graphs}, we improve \Cref{thm:circular_flow_results_in_signed_graph} when $(G,\sigma)$ is a signed planar graph and $k=8$. 
Specifically, we show that $\phi_c(G,\sigma)<\frac{16}{7}$ whenever $G$ is 23-edge-connected (rather than 24-edge-connected, as required by~\Cref{thm:circular_flow_results_in_signed_graph}). Next, we will show the connection between graphs in $\mathcal{SC}_{2k}$ and signed graphs having circular $r$-flow with $r<\frac{2k}{k-1}$. Before that, we need the following lemma.  

\begin{lemma}\label{lem:tight_cut}{\em{\cite{LNWZ22}}}
    Given a signed graph $(G,\sigma)$, if $\phi_c(G,\sigma)=r$, then every circular $r$-flow $(D,f)$ has a tight cut, where a tight cut $[X,X^c]$ of $(G,\sigma)$ with respect to $(D,f)$ is defined as follows: for every edge $uv\in E(G)$ with $u\in X$ and $v\in X^c$, 
    	\begin{equation*}
		f(uv)=\left\lbrace  
		\begin{array}{ll}
			1, &\text{if}~\sigma(uv)=+1 ~\text{and}~(u,v)\in D;\\
			r-1,&\text{if}~\sigma(uv)=+1 ~\text{and}~(v,u)\in D;\\
			\frac{r}{2}+1,&\text{if}~\sigma(uv)=-1 ~\text{and}~(u,v)\in D;\\
			\frac{r}{2}-1,&\text{if}~\sigma(uv)=-1 ~\text{and}~(v,u)\in D.
		\end{array}
		\right.
	\end{equation*}
\end{lemma}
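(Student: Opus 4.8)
The plan is to argue by contradiction: assume $r=\phi_c(G,\sigma)$ but some circular $r$-flow $(D,f)$ has \emph{no} tight cut, and then manufacture a circular $r'$-flow with $r'<r$, contradicting the minimality built into the definition of $\phi_c(G,\sigma)$. The guiding intuition is that a circular flow can always be ``compressed'' to a strictly smaller index unless it is frozen along some edge cut, and being frozen in the relevant way is exactly what a tight cut records. This will simultaneously show that \emph{every} circular $r$-flow has a tight cut, since the argument starts from an arbitrary one.

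First I would normalize: write $r=\tfrac pq$ and replace $f$ by $\tfrac1q f$, so that $\partial_{\!\scriptscriptstyle D}f(v)\equiv 0\pmod r$ for every $v$; hence $\partial_{\!\scriptscriptstyle D}f(v)=k_v r$ for integers $k_v$ with $\sum_{v}k_v=0$, and the edge constraints on $f(e)$ become the stated real intervals (a single interval around $0$ when $r<4$, which is the regime relevant to \Cref{thm:23_circular_flow_signed_planar_graphs}, and a union of two intervals in general). Then I would look for a perturbed flow $f'=f-\varepsilon g$ with $\varepsilon>0$ small and rational and $g\colon E(G)\to\mathbb{Q}$. A direct check shows that $f'$ is a circular $(r-\varepsilon)$-flow provided (i) $\partial_{\!\scriptscriptstyle D}g(v)=k_v$ for every $v$ (then $\partial_{\!\scriptscriptstyle D}f'(v)=k_v(r-\varepsilon)\equiv 0\pmod{r-\varepsilon}$), and (ii) for each edge $e$ the value $g(e)$ lies in an interval $I_e$ dictated by where $f(e)$ sits in its feasible set: $I_e$ has length $\Theta(1/\varepsilon)$ when $f(e)$ is interior, and is bounded on exactly one side by an absolute constant precisely when $f(e)$ equals one of the extreme values occurring in the lemma and is oriented so that shrinking $r$ would push $f(e)$ out of range.

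The existence of such a $g$ is then a transshipment problem: route a flow with prescribed divergences $(k_v)_{v\in V(G)}$ through the interval capacities $(I_e)_{e\in E(G)}$. By Hoffman's circulation theorem this is feasible unless there is a set $X$ with $\emptyset\neq X\neq V(G)$ for which $\sum_{v\in X}k_v$ strictly exceeds the maximum net amount of $g$-flow that $[X,X^c]$ can carry from $X$ to $X^c$. Since $\sum_{v\in X}k_v=\tfrac1r\sum_{v\in X}\partial_{\!\scriptscriptstyle D}f(v)$ is $\tfrac1r$ times the signed amount of $f$ crossing $[X,X^c]$, I would write both sides as sums over the edges of $[X,X^c]$ and compare term by term; this forces every crossing edge into one of the ``bounded-side'' extreme states and forces those states to point consistently across the cut, and a walk through the four sign/orientation possibilities identifies this configuration with exactly the four cases in the statement, i.e.\ $[X,X^c]$ is a tight cut of $(G,\sigma)$ with respect to $(D,f)$. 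As we assumed no tight cut exists, $g$ exists, $f'$ is a circular $(r-\varepsilon)$-flow, and $\phi_c(G,\sigma)\le r-\varepsilon<r$, a contradiction.

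The step I expect to be the crux is the matching in the last paragraph: pinning down the $\varepsilon$-dependence of each interval $I_e$ and verifying that the ``rigid'' edge states are precisely those four---in particular that a negative edge carrying flow $0$ is \emph{not} rigid (since $0$ stays feasible when $r$ shrinks), whereas one carrying $\tfrac r2\pm1$ is, and that the two positive-edge cases come out as $f=1$ on the $X\!\to\!X^c$ side and $f=r-1$ on the other side. When $r\ge 4$ the feasible set of a negative edge is a union of two intervals, so one must first fix, for each negative edge, which interval contains $f(e)$, reducing condition (ii) to genuine interval constraints before Hoffman's theorem applies; this non-convexity, together with the sign bookkeeping, is where essentially all the work sits.
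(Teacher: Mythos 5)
First, a point of reference: this lemma is quoted from \cite{LNWZ22} and the present paper gives no proof of it, so there is no in-paper argument to compare against; I can only assess your reconstruction on its own terms. Your overall strategy -- assume no tight cut, perturb $f'=f-\varepsilon g$ with $\partial_{\!\scriptscriptstyle D}g(v)=k_v$, and get feasibility of $g$ from Hoffman's circulation theorem unless a violating cut exists, which is then shown to be tight -- is sound, and the accounting does close if done with the right normalization: after rescaling by $q$, choose the representative of each flow value modulo $r$ so that positive edges carry values in $[1,r-1]$ and negative edges carry values in $[-(\frac r2-1),\frac r2-1]$ (a \emph{single} interval). Then each crossing edge of a Hoffman-violating cut contributes at most $\frac1r$ to the violation, with equality exactly in the four ``rigid'' states (positive out of $X$ at $1$, positive into $X$ at $r-1$, negative out of $X$ at $-(\frac r2-1)\equiv\frac r2+1$, negative into $X$ at $\frac r2-1$), and contributes a quantity tending to $-\infty$ as $\varepsilon\to0$ otherwise; choosing $\varepsilon$ smaller than the minimum slack of the non-extreme edges forces every crossing edge into a rigid state, i.e.\ the cut is tight, exactly as you intend.

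The genuine gap is at the step you yourself flag as the crux, and your proposed way through it does not work as stated. You plan to ``fix, for each negative edge, which interval contains $f(e)$'' using the two ranges $[0,\frac r2-1]\cup[\frac r2+1,\,\cdot\,]$ of \Cref{def_circular_flow}. With that restriction, a negative edge whose value sits at an outer endpoint of its fixed interval (i.e.\ $f(e)\equiv 0\pmod r$, at $0$ of the lower range or at the top of the upper range) becomes one-sidedly rigid in your transshipment problem and contributes a term that is bounded (in fact $0$) rather than $-\infty$; a violating cut may then contain such edges, and you cannot conclude it is tight -- precisely the failure your own remark that ``a negative edge carrying flow $0$ is not rigid'' warns against. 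The fix is to work modulo $r$ (equivalently, with the symmetric representatives above), which collapses the negative-edge constraint to one interval with no spurious endpoints; note that the tight-cut values in the lemma must indeed be read as residues, as the paper's own application in the proof of \Cref{thm:main_flow_orientation} confirms by using $-2$ in place of $\frac r2+1$ after scaling. Two smaller omissions should also be repaired: the quantification over $\varepsilon$ (infeasibility for one $\varepsilon$ is not enough; pick $\varepsilon$ below all slacks, or pass to a fixed cut violating for arbitrarily small $\varepsilon$), and the translation between your rational-valued circular $(r-\varepsilon)$-flow and the integer-valued \Cref{def_circular_flow} (clear denominators, and double $p'$ if parity requires it) so that the contradiction with $\phi_c(G,\sigma)=r$ is actually with the definition used here.
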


Denote by $p^+(v)$ the number of positive edges incident with $v$ in $(G,\sigma)$ and recall that $2G$ is the graph formed by replacing every edge in $G$ with $2$ parallel edges.

\begin{theorem}\label{thm:main_flow_orientation}
	Given a signed graph $(G,\sigma)$ and a $\mathbb{Z}_{4k}$-pc-boundary $\beta$ such that $\beta(v)\equiv 2k\cdot p^+(v)\pmod{4k}$, if $2G$ admits a strongly connected $(4k,\beta)$-orientation, then $\phi_c(G,\sigma)<\frac{2k}{k-1}$.
\end{theorem}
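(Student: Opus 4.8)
The plan is to take the promised strongly connected $(4k,\beta)$-orientation $D$ of $2G$, read off from it a flow $f$ on the doubled graph, and then push this flow down to a circular flow of $(G,\sigma)$ with denominator $q$ strictly larger than $k-1$, which is exactly what $\phi_c(G,\sigma)<\frac{2k}{k-1}$ asserts. Concretely, for each edge $e$ of $G$ let $e_1,e_2$ be its two copies in $2G$; the two copies carry orientations under $D$, and I want to convert the pair of $\pm1$-type "unit" contributions into a single flow value on $e$ living in the allowed range. The key bookkeeping device is the boundary condition $\beta(v)\equiv 2k\cdot p^+(v)\pmod{4k}$: this is precisely engineered so that, after choosing the orientation of $e$ in $G$ to agree with (say) $e_1$ and assigning $f(e)$ in terms of whether $e_1,e_2$ point the same way, the mod-$4k$ boundary of $D$ at $v$ translates into a mod-$2k$ flow-conservation condition for $(G,\sigma)$, with the $2k\cdot p^+(v)$ shift absorbing the sign pattern $\sigma$ at $v$. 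I would set this up so that positive edges receive $f$-values in $\{k-1,k,k+1\}$ (equivalently a value in $\{q,\dots,p-q\}$ with $p=2k$, $q=k-1$ after the final rescaling) and negative edges receive values in the complementary allowed set for negative edges in Definition~\ref{def_circular_flow}.

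The steps, in order. First, fix a $\mathbb{Z}_{4k}$-pc-boundary $\beta$ with $\beta(v)\equiv 2k\cdot p^+(v)\pmod{4k}$ and a strongly connected $(4k,\beta)$-orientation $D$ of $2G$; such $D$ exists by hypothesis. Second, define an orientation $D_0$ of $G$ and an integer function $f_0:E(G)\to\mathbb{Z}$: for each edge $e$ with copies $e_1,e_2$, orient $e$ in $G$ the way $e_1$ is oriented, and set $f_0(e)=2$ if $e_1,e_2$ agree and $f_0(e)=0$ if they disagree (for positive $e$), with the analogous dictionary for negative $e$ chosen so that the output lands in the negative-edge range. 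Third, verify the boundary computation: at each vertex $v$, $\partial_{D}(\mathbf 1)(v)=d_D^+(v)-d_D^-(v)\equiv\beta(v)\pmod{4k}$, and since each copy contributes $\pm1$ while the $f_0$-assignment records exactly the agree/disagree pattern, one gets $\partial_{D_0}f_0(v)\equiv \beta(v)-2k\cdot p^+(v)\equiv 0\pmod{2k}$ after accounting for the negative edges incident to $v$ — here the $2k\cdot p^+(v)$ term is what cancels the contribution of the negative-edge dictionary. Fourth, translate $(D_0,f_0)$, which is essentially a $\frac{2k}{k}$-type object, into a genuine circular $\frac{p}{q}$-flow with $q>k-1$: because $D$ is \emph{strongly connected}, no cut of $2G$ is "saturated" in the way Lemma~\ref{lem:tight_cut} would force if $\phi_c$ equalled $\frac{2k}{k-1}$ exactly, so a standard ``uncross/augment along directed paths'' argument lets me nudge the flow values strictly into the open interval, i.e. realize some $\frac{p}{q}$ with $\frac{p}{q}<\frac{2k}{k-1}$. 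Equivalently: if $\phi_c(G,\sigma)=\frac{2k}{k-1}$, then by Lemma~\ref{lem:tight_cut} every optimal circular flow has a tight cut $[X,X^c]$; but the tight-cut equations on the $G$-side pull back to a cut of $2G$ on which $D$ is not strongly connected (all copies across it point one way up to the prescribed pattern), contradicting strong connectivity of $D$. Hence $\phi_c(G,\sigma)<\frac{2k}{k-1}$, which is the desired strict inequality (noting $\phi_c$ is always attained, so ``$<$'' is meaningful).

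The main obstacle I expect is Step four, the passage from ``a $(4k,\beta)$-orientation of $2G$ exists and is strongly connected'' to the \emph{strict} bound $\phi_c<\frac{2k}{k-1}$ rather than merely $\phi_c\le\frac{2k}{k-1}$. Producing the flow in the \emph{closed} range is routine bookkeeping once the $\beta\equiv 2k\,p^+(v)$ dictionary is pinned down; the subtlety is that circular $\frac{2k}{k-1}$-flows are allowed to use the extreme values, so to get strictness I must exploit strong connectivity of $D$ in an essential way. The clean route is the contrapositive via Lemma~\ref{lem:tight_cut}: assume $\phi_c(G,\sigma)=\frac{2k}{k-1}=:r$, apply the lemma to get a tight cut $[X,X^c]$, and check that the four cases of edge behavior across a tight cut, when lifted to the two copies in $2G$, force every $2G$-edge across $[X,X^c]$ to be directed consistently (either all into $X$ or all out of $X$, after correcting for sign and for the fixed orientation convention) — which says $d_D^+(X)=0$ or $d_D^-(X)=0$, contradicting that $D$ is strongly connected. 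The bits that need care here are matching up the $\frac r2\pm1$ values on negative cut-edges with the doubled-orientation picture and making sure the $\beta\equiv 2k\,p^+(v)$ congruence is consistent with a cut being tight; but modulo that case analysis the argument closes.
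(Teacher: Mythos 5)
Your proposal is correct and follows essentially the same route as the paper: you construct a circular $\frac{2k}{k-1}$-flow of $(G,\sigma)$ from the strongly connected $(4k,\beta)$-orientation of $2G$ (the paper does this via $f_1\in\{-2,0,2\}$ recording agreement of the two copies plus the shift $f_2=2k$ on positive edges, which is your dictionary up to scaling), and then obtain strictness by the same contrapositive: if $\phi_c(G,\sigma)=\frac{2k}{k-1}$, \Cref{lem:tight_cut} yields a tight cut whose edge values force both copies of every cut edge of $2G$ to point the same way across the cut, giving $d_D^+(X)=0$ and contradicting strong connectivity of $D$. There is no substantive difference from the paper's argument.
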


\begin{proof}
	Let $D$ be a strongly connected $(4k,\beta)$-orientation on $2G$ with $\beta(v)\equiv 2k\cdot p^+(v)\pmod{4k}$. For every edge $e\in E(G)$, we denote by $\{e_1,e_2\}$ the parallel edges in $E(2G)$ corresponding to $e$. Let $D_1$ be an auxiliary orientation on $G$. We first define a mapping $f_1$ on $E(G)$ such that $f_1(e)=I(e_1)+I(e_2)$, where $I$ is an indicator function on $E(2G)$ such that $I(e_i)=1$ if $D(e_i)=D_1(e)$ and $I(e_i)=-1$ otherwise. It is easy to see that
$f_1(e)\in\{-2,0,2\}$ and $\partial_{\!\scriptscriptstyle D_1} f_1(v)=d_{D}^+(v)-d_{D}^-(v)\equiv \beta(v)\pmod{4k}$ for every vertex $v\in V(G)$. We then define another mapping $f_2:E(G,\sigma)\to \{0,2k\}$ such that $f_2(e)=2k$ if $e$ is positive and $f_2(e)=0$ otherwise. Thus $\partial_{\!\scriptscriptstyle D_1}f_2(v)\equiv2k\cdot p^+(v)\pmod{4k}$ for each vertex $v\in V(G)$. Let $f:=f_1+f_2$. We know that in $(G,\sigma)$, $f(e)\in \{2k-2,2k,2k+2\}$ for every positive edge $e$ and $f(e)\in\{-2,0,2\}$ for every negative edge $e$. Moreover, $\partial_{\!\scriptscriptstyle D_1} f(v)=\partial_{\!\scriptscriptstyle D_1} f_1(v)+\partial_{\!\scriptscriptstyle D_1} f_2(v)\equiv \beta(v)+2k\cdot p^+(v)\equiv0\pmod{4k}$. Thus, $(D_1,f)$ is a circular $\frac{4k}{2k-2}$-flow in $(G,\sigma)$, which implies $\phi_c(G,\sigma)\leq \frac{4k}{2k-2}=\frac{2k}{k-1}$.
	
	To show that $\phi_c(G,\sigma)<\frac{2k}{k-1}$, suppose to the contrary that $\phi_c(G,\sigma)=\frac{2k}{k-1}$. By~\Cref{lem:tight_cut}, every circular $\frac{2k}{k-1}$-flow of $(G,\sigma)$ has a tight cut. In particular, there is a tight cut $[X,X^c]$ with respect to $(D_1,f)$. By the definition of tight cuts, for every edge $e=uv\in[X,X^c]$ with $u\in X$ and $v\in X^c$, when $\sigma(e)=+1$, $f(e)=2k-2$ if $(u,v)\in D_1$ and $f(e)=2k+2$ if $(v,u)\in D_1$; when $\sigma(e)=-1$, $f(e)=-2$ if
$(u,v)\in D_1$ and $f(e)=2$ if $(v,u)\in D_1$. By the definitions of mappings $I$ and $f$, if $f(e)=2k+2$ or $f(e)=2$, then $e_1$ and $e_2$ have the same direction in $D$, and $e$ is oriented in $D_1$ the same as the $e_i$'s in $D$; if $f(e)=2k-2$ or $f(e)=-2$, then $e_1$ and $e_2$ have the same direction in $D$, and $e$ is oriented in $D_1$ opposite to the $e_i$'s in $D$. Therefore, under the orientation $D$, all the edges of $E(2G)$ are oriented from $X^c$ to $X$; thus $D$ is not strongly
connected (as $d_{D}^+(X)=0$), which is a contradiction. Hence $\phi_c(G,\sigma)<\frac{2k}{k-1}$.
\end{proof}

The conclusion of~\Cref{thm:23_circular_flow_signed_planar_graphs} is the conclusion of~\Cref{thm:main_flow_orientation}, 
with $k=8$. So it suffices to prove that $2G\in\mathcal{SC}_{16}$ for every $23$-edge-connected planar graph $G$. Hence, the graph family $\mathcal{SC}_{k}$ plays an important role. 

Given a graph $G$ and its connected subgraph $H$, let $G':=G/H$ and let $w$ denote the new vertex formed by contracting $H$. 
For every {$\mathbb{Z}_{2k}$-pc-boundary} $\beta$ of $G$, a mapping $\beta': V(G')\to \{0,\pm1,\ldots,\pm k\}$ is defined 
as follows: $\beta'(w):\equiv\sum_{v\in V(H)}\beta(v)\pmod{2k}$ and 
$\beta'(v):=\beta(v)$ 
for every $v\in V(G')\backslash \{w\}$. 
Note that $\beta'$ is a {$\mathbb{Z}_{2k}$-pc-boundary} of $G'$.

Fix a graph $G$ and its subgraph $H$, a {$\mathbb{Z}_{2k}$-pc-boundary} $\beta$ of $G$, and $\beta'$ of $G'$ as above.
Note that $G$ admits a $(2k,\beta)$-orientation if $G'$ admits a $(2k,\beta')$-orientation and $H$ admits a $(2k,\beta'')$-orientation 
for every $\mathbb{Z}_{2k}$-pc-boundary $\beta''$ of $H$.
And if these orientations for $G'$ and $H$ are both strongly connected, then so is the orientation for $G$. 
Thus, if $H\in \mathcal{SC}_{k}$ and $G/H\in\mathcal{SC}_{k}$, then also $G\in\mathcal{SC}_{k}$. To push this idea further, we seek the largest class of graphs $H$ to which it applies. This motivates the following two definitions, similar to those in~\cite{HLLW20}.

\begin{definition}\label{def:nice_supergraph}
	Let $G$ be a graph. If $H\subseteq G$ and there exist distinct vertices $x,y\in V(H)$ such that we have an $(x,y)$-path in $G-E(H)$, then we call $G$ a \emph{nice supergraph} of $H$.
\end{definition}

\begin{definition}\label{def:weakly_contractible}
	Let $k$ be a positive integer. Given a graph $H$ and its nice supergraph $G$, we call $H$ \emph{weakly contractible} if for every {$\mathbb{Z}_{2k}$-pc-boundary} $\beta$ of $G$, 
    every strongly connected $(2k,\beta')$-orientation of $G/H$ can be extended to a strongly connected $(2k,\beta)$-orientation 
    of $G$. 
    
    Let $\mathcal{W}_{k}$ be the family of all graphs that are weakly contractible. 
\end{definition}
	
From the definitions, it is straightforward to check that $\mathcal{SC}_{k}\subseteq\mathcal{W}_{k}$. Similar to the connection between the graph families $\mathcal{W}_{3}$ and $\mathcal{SC}_{3}$ shown in \cite{HLLW20}, we have the following proposition about the graph families $\mathcal{W}_{k}$ and $\mathcal{SC}_{k}$. 
	
\begin{proposition}\label{prop:H_Wk_SCk}
	A graph $H\in\mathcal{W}_{k}$ if and only if $H+xy\in \mathcal{SC}_k$ for all distinct $x,y\in V(H)$.
\end{proposition}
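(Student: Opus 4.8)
The plan is to prove both implications by directly unwinding the definitions of $\mathcal{W}_k$ (weak contractibility, Definition~\ref{def:weakly_contractible}) and $\mathcal{SC}_k$ (Definition~\ref{def:strongly_connected_graph_family}), using the fact that adding a single edge $xy$ to $H$ produces the ``minimal'' nice supergraph of $H$ — the path in $G - E(H)$ required by Definition~\ref{def:nice_supergraph} is just the single edge $xy$ — and that $(H+xy)/H$ is exactly the single-vertex graph with a loop, i.e.\ (after deleting the loop, as our contraction convention in the excerpt dictates) the graph $K_1$. So a strongly connected $(2k,\beta')$-orientation of $(H+xy)/H = K_1$ is vacuous, which is what forces the ``only if'' direction to say something, and what lets the ``if'' direction reduce to a statement about $H+xy$ alone.

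First I would prove the forward direction. Assume $H\in\mathcal{W}_k$, and fix distinct $x,y\in V(H)$; I want $H+xy\in\mathcal{SC}_k$, i.e.\ for every $\mathbb{Z}_{2k}$-pc-boundary $\gamma$ of $H+xy$, the graph $H+xy$ has a strongly connected $(2k,\gamma)$-orientation. Set $G := H+xy$. Then $G$ is a nice supergraph of $H$, witnessed by the length-one path $xy$ in $G - E(H)$. Given $\gamma$, form the contracted boundary $\gamma'$ on $G/H$ as in the excerpt; since $G/H$ has a single vertex, $\gamma' \equiv 0 \pmod{2k}$ automatically, and the empty orientation of $G/H$ is trivially a strongly connected $(2k,\gamma')$-orientation. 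By $H\in\mathcal{W}_k$, this orientation extends to a strongly connected $(2k,\gamma)$-orientation of $G = H+xy$. Since $\gamma$ was arbitrary, $H+xy\in\mathcal{SC}_k$.

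Next the reverse direction. Assume $H+xy\in\mathcal{SC}_k$ for all distinct $x,y\in V(H)$; I want $H\in\mathcal{W}_k$. Fix an arbitrary nice supergraph $G$ of $H$ with witnessing pair $x,y\in V(H)$ and an $(x,y)$-path $Q$ in $G - E(H)$, fix a $\mathbb{Z}_{2k}$-pc-boundary $\beta$ of $G$, and fix a strongly connected $(2k,\beta')$-orientation $D'$ of $G/H$. I must extend $D'$ to a strongly connected $(2k,\beta)$-orientation of $G$. The idea is to contract, inside $G$, everything outside $H$ except the path $Q$: more precisely, view $G$ as being built from the subgraph $H \cup Q$ together with the remaining edges, and argue in stages. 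One clean way: consider $G' := G / (\text{the partition collapsing } V(Q)\setminus\{x,y\} \text{ and all of } V(G)\setminus V(H) \text{ appropriately})$ — but to keep things simple I would instead contract $Q$ down to the single edge $xy$ first: let $G_1$ be obtained from $G$ by contracting all internal vertices of $Q$ into, say, $x$; then $G_1 \supseteq H + xy$ (the contracted copy of $Q$ gives the edge $xy$, or a multi-edge, and at least one copy serves as $xy$), and $G_1 / H = G / H$, so $D'$ is still a strongly connected $(2k, \beta_1')$-orientation of $G_1/H$ where $\beta_1$ is the pushed-forward boundary. The remaining work is to show that, for the purposes of extending orientations, $H$ behaves the same inside $G_1$ as inside $H+xy$: one orients the edges of $H_1 := H + xy$ using the hypothesis $H+xy\in\mathcal{SC}_k$ with the ``local'' boundary that $D'$ imposes on $H_1$ (this local boundary is a $\mathbb{Z}_{2k}$-pc-boundary of $H+xy$ summing to $\beta'(w) \equiv 0$), obtaining a strongly connected orientation of $H+xy$, and then reassembles: the strong connectivity of $D'$ on $G_1/H$ plus the strong connectivity of the orientation on $H+xy$ glue to a strongly connected orientation of $G_1$ (every directed path through $w$ in $G_1/H$ can be rerouted through $H$ using its internal strong connectivity, and the edge $xy$ used for gluing corresponds to the path $Q$), and finally lifting this orientation back along the contraction $G \to G_1$ — which only contracted a path, hence is reversible for orientation purposes (uncontracting a path preserves both the boundary mod $2k$ and strong connectivity, since a directed path is itself strongly connected between its endpoints) — yields the desired strongly connected $(2k,\beta)$-orientation of $G$. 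Hence $H\in\mathcal{W}_k$.

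The main obstacle I anticipate is the gluing/reassembly bookkeeping in the reverse direction: one must be careful that the boundary restricted to $H+xy$ really is a legitimate $\mathbb{Z}_{2k}$-pc-boundary (parity at each vertex, total sum $\equiv 0 \bmod 2k$ — this uses $\beta'(w)\equiv\sum_{v\in V(H)}\beta(v)$ and the fact that $D'$ realizes $\beta'$), and that ``strongly connected on $G/H$'' plus ``strongly connected on $H$-with-one-extra-edge'' genuinely implies ``strongly connected on $G$'' — the subtle point being that a cut of $G$ separating part of $V(H)$ from the rest corresponds, after contracting, either to a cut of $G/H$ (handled by $D'$ being strongly connected) or to a cut internal to $H+xy$ (handled by the hypothesis), with the path $Q$ ensuring the two pieces are genuinely linked in both directions. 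This is exactly the kind of argument sketched for Proposition~\ref{prop:SZ9_lifting} in the excerpt (``extend $D$ in the obvious way''), and the analogous statement $\mathcal{W}_3 \leftrightarrow \mathcal{SC}_3$ is cited from \cite{HLLW20}, so I expect the write-up to follow that template and the only real care needed is in the strong-connectivity cut analysis.
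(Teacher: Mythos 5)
Your forward direction is fine: taking $G=H+xy$, noting that $G/H=K_1$, whose empty orientation is (vacuously) strongly connected with boundary $0$, and invoking weak contractibility is the right reduction. (The paper itself gives no proof of \Cref{prop:H_Wk_SCk}, citing only the analogy with \cite{HLLW20}, so there is nothing to compare line by line; but the substance of the proposition is the reverse implication, and that is where your argument has a genuine gap.)

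The gap: in \Cref{def:weakly_contractible} the extension must \emph{agree with the given orientation $D'$ on every edge of $G/H$}, and the edges of your path $Q$ are edges of $G/H$, already oriented by $D'$ (and not necessarily oriented consistently as a directed path). Your plan contracts $Q$, orients the resulting edge $xy$ by applying the hypothesis to $H+xy$, and then ``uncontracts''; this re-chooses orientations of edges whose directions are prescribed, so what you build need not be an extension of $D'$ at all (also $G_1/H\neq G/H$: the internal vertices of $Q$ are absorbed into the contracted vertex, so $D'$ is not literally an orientation of $G_1/H$). Even as a pure existence argument the uncontraction step is unjustified: an internal vertex $z$ of $Q$ carries its own boundary value $\beta(z)$, while orienting $Q$ as a directed path contributes $0$ to the out-minus-in count at $z$, which need not match. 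A correct proof has to confront two issues your sketch never touches: (i) since $D'$ need not orient $Q$ consistently, one must first show, using the strong connectivity of $D'$ together with the undirected path guaranteed by niceness, that there exist \emph{distinct} vertices $a,b\in V(H)$ joined by a $D'$-directed path whose internal vertices all lie outside $V(H)$ (the case of an edge of $G-E(H)$ with both ends in $V(H)$ being the easy one, since that edge is deleted in $G/H$ and is free to orient); and (ii) applying $H+ab\in\mathcal{SC}_k$ with the residual boundary shifted by $\pm1$ at $a$ and $b$ does not let you prescribe the direction of the added edge, so an extra step is needed --- for instance reversing a directed cycle through the added edge, which preserves both the boundary at every vertex and strong connectivity --- to force the added edge to point from $a$ to $b$, in the same direction as the external directed path; only then does the cut-by-cut gluing argument (the virtual arc being simulated by that external directed path) produce a strongly connected $(2k,\beta)$-orientation of $G$ extending $D'$. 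As written, your reverse direction does not establish $H\in\mathcal{W}_k$.
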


Note that we allow graphs to have multiple edges, so possibly $xy\in e(H)$. 
As an immediate corollary of the above proposition, we have the following.

\begin{proposition}\label{prop:SC_SZ_W_graphs}
	\begin{enumerate}[itemsep=-2pt, label=(\arabic*)]
		\item\label{prop:SC_SZ_W_graph_1_SCk} If $G$ contains a Hamiltonian cycle $C$ such that $G-E(C)\in\mathcal{SZ}_{k}$, then $G\in\mathcal{SC}_k$.
		\item\label{prop:SC_SZ_W_graph_2_Wk} If for any two distinct vertices $x,y$ of a graph $G$, there is a Hamiltonian path $P_{xy}$ such that $G-E(P_{xy})\in\mathcal{SZ}_{k}$, then $G\in\mathcal{W}_k$.
	\end{enumerate}
\end{proposition}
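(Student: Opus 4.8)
The plan is to prove statement~(1) directly and then obtain statement~(2) from it via \Cref{prop:H_Wk_SCk}. Recall that by the equivalence of~\cite{LLL17} quoted above, a graph belongs to $\mathcal{SZ}_k$ if and only if it admits a $(2k,\beta)$-orientation for every $\mathbb{Z}_{2k}$-pc-boundary $\beta$.

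For statement~(1), write $C=v_1v_2\cdots v_nv_1$ and set $H:=G-E(C)$, so that $V(H)=V(G)$ and $d_H(v)=d_G(v)-2$ for every vertex $v$. Fix an arbitrary $\mathbb{Z}_{2k}$-pc-boundary $\beta$ of $G$. Since $d_H(v)\equiv d_G(v)\pmod 2$ and the sum $\sum_{v}\beta(v)$ is unchanged, $\beta$ is also a $\mathbb{Z}_{2k}$-pc-boundary of $H$; as $H\in\mathcal{SZ}_k$, it admits a $(2k,\beta)$-orientation $D_H$. Now orient $C$ cyclically as $v_1\to v_2\to\cdots\to v_n\to v_1$ and let $D$ be the orientation of $G$ obtained by combining $D_H$ with this orientation of $C$. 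The cyclically oriented cycle contributes exactly $1$ to both the out-degree and the in-degree of every vertex, so $d_D^+(v)-d_D^-(v)=d_{D_H}^+(v)-d_{D_H}^-(v)\equiv\beta(v)\pmod{2k}$ for all $v$; thus $D$ is a $(2k,\beta)$-orientation of $G$. Moreover, for every proper nonempty $S\subset V(G)$ there is an index $i$ (mod $n$) with $v_i\in S$ and $v_{i+1}\notin S$, and some index $j$ with $v_j\notin S$ and $v_{j+1}\in S$; hence already the oriented Hamiltonian cycle—and therefore $D$—satisfies $d_D^+(S)\geq1$ and $d_D^-(S)\geq1$. So $D$ is a strongly connected $(2k,\beta)$-orientation, and since $\beta$ was arbitrary, $G\in\mathcal{SC}_k$.

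For statement~(2), by \Cref{prop:H_Wk_SCk} it suffices to show $G+xy\in\mathcal{SC}_k$ for every pair of distinct vertices $x,y\in V(G)$. Fix such a pair and let $P_{xy}$ be the Hamiltonian $(x,y)$-path guaranteed by hypothesis, so $G-E(P_{xy})\in\mathcal{SZ}_k$. Adding the new edge $xy$ to $P_{xy}$ produces a Hamiltonian cycle $C$ of $G+xy$ with $(G+xy)-E(C)=G-E(P_{xy})\in\mathcal{SZ}_k$, so applying statement~(1) to $G+xy$ with this cycle yields $G+xy\in\mathcal{SC}_k$, as required. The whole argument is essentially bookkeeping; the only points requiring care are checking that $\beta$ remains a $\mathbb{Z}_{2k}$-pc-boundary after deleting a Hamiltonian cycle (parity of degrees, and the congruence on $\sum_v\beta(v)$) and observing that a cyclically oriented Hamiltonian cycle on its own certifies strong connectivity of any orientation containing it—so the $\mathcal{SZ}_k$-orientation of the remainder may be chosen with no regard to connectivity. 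I do not anticipate any genuine obstacle.
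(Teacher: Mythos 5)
Your proposal is correct and takes essentially the same route as the paper: obtain a $(2k,\beta)$-orientation of $G-E(C)$ (using the pc-boundary equivalence for $\mathcal{SZ}_k$), superimpose the cyclically oriented Hamiltonian cycle to get strong connectivity without changing the boundary, and deduce (2) from (1) via \Cref{prop:H_Wk_SCk} by adding the edge $xy$ to the Hamiltonian path. The paper's proof is just a more terse version of the same argument; your parity check that $\beta$ remains a $\mathbb{Z}_{2k}$-pc-boundary after deleting $E(C)$ is the only detail the paper leaves implicit.
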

\begin{proof}
(1) If $D$ is a $(2k,\beta)$-orientation, for some given boundary $\beta$, then adding a cyclicly oriented Hamiltonian cycle yields a strongly connected digraph that is also a $(2k,\beta)$-orientation.  (2) follows directly from (1) by~\Cref{prop:H_Wk_SCk}.
\end{proof}
For our proofs in the following sections, we will also make use of the next proposition.  Both its statement and its proof are
analogous to those of \Cref{prop:SZ9_lifting}.

\begin{proposition}\label{prop:lifting_WSC16}
	Given a graph $G$ and a vertex $v\in V(G)$, let $G'$ be a graph formed from $G$ by lifting some edge pairs at $v$ and let $G''$ be a graph formed from $G'$ by deleting the vertex $v$. Then the following statements hold.
	\begin{enumerate}[label=(\roman*)]
 
		\item \label{prop:WSC16_contract} Given a connected subgraph $H$ of $G$, if $H\in \mathcal{W}_k$ and $G/H\in\mathcal{SC}_k$, then $G\in\mathcal{SC}_k$.
		
		\item \label{prop:WSC16_lifting_1} Given a connected subgraph $H$ of $G'$, if $H\in \mathcal{W}_k$ and $G'/H\in\mathcal{SC}_k$, then $G\in\mathcal{SC}_k$.
		
		\item \label{prop:WSC16_lifting_2} If $d_{G'}(v)\geqslant k$ and $G''\in\mathcal{SC}_k$, then $G\in\mathcal{SC}_k$.
	\end{enumerate}
\end{proposition}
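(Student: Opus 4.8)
The plan is to imitate, essentially verbatim, the proof scheme sketched for \Cref{prop:SZ9_lifting}: fix a target $\mathbb{Z}_{2k}$-pc-boundary $\beta$ for $G$, pass to the appropriate smaller graph, invoke the hypothesis there to get a strongly connected $(2k,\cdot)$-orientation, and then lift/extend back to $G$ in the obvious way, checking at each stage that boundaries match mod $2k$, that parity is preserved, and that strong connectivity is not destroyed. The only genuinely new ingredient compared with \Cref{prop:SZ9_lifting} is that we must track \emph{strong connectivity}, which is exactly what the definitions of $\mathcal{W}_k$ and $\mathcal{SC}_k$ were set up to carry.

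For part~\ref{prop:WSC16_contract}: given a $\mathbb{Z}_{2k}$-pc-boundary $\beta$ of $G$ and a connected $H\subseteq G$ with $H\in\mathcal{W}_k$, first note that $G$ is a nice supergraph of $H$ (a connected $H$ together with the rest of $G$ and the fact that $v(G)>v(H)$, or if $H=G$ the statement is trivial since then $G/H=K_1$ and $\mathcal{W}_k$ membership is vacuous — more carefully, one uses that $H$ being weakly contractible is exactly stated relative to its nice supergraphs, so one should first dispense with the degenerate case $H=G$). Then form $\beta'$ on $G/H$ by $\beta'(w):\equiv\sum_{v\in V(H)}\beta(v)\pmod{2k}$ and $\beta'=\beta$ elsewhere; as observed in the excerpt, $\beta'$ is a $\mathbb{Z}_{2k}$-pc-boundary. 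Since $G/H\in\mathcal{SC}_k$, there is a strongly connected $(2k,\beta')$-orientation of $G/H$, and since $H\in\mathcal{W}_k$, the definition of weakly contractible lets us extend it to a strongly connected $(2k,\beta)$-orientation of $G$. As $\beta$ was arbitrary, $G\in\mathcal{SC}_k$. Part~\ref{prop:WSC16_lifting_1} reduces to part~\ref{prop:WSC16_contract} after we observe that an orientation of $G'$ yields one of $G$ of the same boundary: lifting an edge pair $xv,vy$ to $xy$ and then orienting $xy$, say $x\to y$, corresponds to orienting $x\to v$ and $v\to y$ in $G$, which changes no vertex's net degree (for $x$: one out-edge either way; for $y$: one in-edge either way; for $v$: one in and one out, net $0$), and a directed $x$–$v$–$y$ path in $G$ keeps any directed walk through $xy$ alive, so strong connectivity is preserved. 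Hence a strongly connected $(2k,\beta)$-orientation of $G'$ pulls back to one of $G$; combining with part~\ref{prop:WSC16_contract} applied inside $G'$ gives the claim.

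For part~\ref{prop:WSC16_lifting_2}: here $v$ has been lifted so that $d_{G'}(v)\ge k$, i.e.\ $d_{G'}(v)\ge k$ remaining edges all go between $v$ and $G''=G'-v$ (any edge pair at $v$ with both endpoints back in $G''$ we may assume already lifted; what remains are edges from $v$ to $V(G'')$). Given a $\mathbb{Z}_{2k}$-pc-boundary $\beta'$ of $G'$, we want to choose a boundary on $G''$ compatible with it and with a good orientation at $v$. Orient the $d_{G'}(v)=d$ edges at $v$ arbitrarily except that we will need to adjust so that $d_{D}^+(v)-d_{D}^-(v)\equiv\beta'(v)\pmod{2k}$; since $|\beta'(v)|\le k\le d$ and $\beta'(v)\equiv d\pmod 2$, there is a split of the $d$ edges into out/in so that out$-$in $=\beta'(v)$ exactly (not just mod $2k$) — this is where $d\ge k$, hence $d\ge|\beta'(v)|$, is used. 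This orientation at $v$ imposes a net contribution $\equiv$ (number of $v$-edges into $u$ minus out of $u$) at each neighbour $u$; define $\beta''(u)$ on $G''$ to be $\beta'(u)$ minus that contribution, reduced to the representative in $\{0,\pm1,\dots,\pm k\}$ of the right parity — one checks $\sum_u\beta''(u)\equiv\sum_u\beta'(u)-\beta'(v)\equiv -\beta'(v)+\text{(stuff)}$; more precisely $\sum_{u\in V(G'')}\beta''(u)\equiv\sum_{v'\in V(G')}\beta'(v')\equiv 0\pmod{2k}$ and the parity condition holds since removing the $v$-edges changes each $d_{G''}(u)$ and each contribution by the same parity. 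So $\beta''$ is a $\mathbb{Z}_{2k}$-pc-boundary of $G''$, and $G''\in\mathcal{SC}_k$ gives a strongly connected $(2k,\beta'')$-orientation $D''$. Re-adding $v$ with its chosen orientation yields a $(2k,\beta')$-orientation $D'$ of $G'$; it is strongly connected because $d_{G'}(v)\ge k\ge 2$ forces at least one edge out of $v$ and at least one into $v$ (using again the exact split with $\beta'(v)\in\{0,\pm1,\dots,\pm k\}$: if $\beta'(v)=d$ then all $v$-edges leave $v$, but $d\ge k$ means $d$ could exceed... — one must check: if $\beta'(v)=d$, $v$ is a source, which breaks strong connectivity; but $\beta'(v)\le k$ and we only know $d\ge k$, so $d>k$ is possible and then we cannot have $\beta'(v)=d$; if $d=k=|\beta'(v)|$ exactly this is the delicate case and one orients with $(k+\beta'(v))/2$... this needs care — in fact one allows using a not-necessarily-exact split: choose out$-$in $\equiv\beta'(v)\pmod{2k}$ with both out$>0$ and in$>0$, which is possible precisely when $d\ge$ something like... this is the point to be careful about). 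Once $D'$ is a strongly connected $(2k,\beta')$-orientation of $G'$, part~\ref{prop:WSC16_lifting_1} with $H$ a single vertex (or the pullback argument above directly) gives $G\in\mathcal{SC}_k$.

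The main obstacle is the bookkeeping in part~\ref{prop:WSC16_lifting_2}: ensuring simultaneously that (a) the orientation at $v$ realizes $\beta'(v)$ mod $2k$, (b) the induced boundary $\beta''$ on $G''$ is genuinely parity-compliant (both the congruence-to-$0$-mod-$2k$ and the parity-matches-degree conditions), and (c) the final orientation is strongly connected — the last point is where the hypothesis $d_{G'}(v)\ge k$ is precisely calibrated, since with fewer than $k$ edges one cannot guarantee an orientation at $v$ with positive out-degree, positive in-degree, \emph{and} the prescribed net degree mod $2k$; getting the constant right (whether it should be $k$, $k-1$, or $k+1$) and matching it to the analogous \Cref{prop:SZ9_lifting}\ref{prop:SZ9_lifting_2} is the part that needs genuine checking rather than routine imitation. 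Everything else is a direct transcription of the $\mathcal{SZ}_k$ arguments with "strongly connected" carried along.
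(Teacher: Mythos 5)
Your parts \ref{prop:WSC16_contract} and \ref{prop:WSC16_lifting_1} follow exactly the route the paper intends (the paper only sketches this proof as ``analogous to \Cref{prop:SZ9_lifting}''): fix a $\mathbb{Z}_{2k}$-pc-boundary $\beta$ of $G$, note it is also a pc-boundary of $G'$ since lifting changes degrees by even amounts, use $G/H\in\mathcal{SC}_k$ to get a strongly connected $(2k,\beta')$-orientation, extend through $H\in\mathcal{W}_k$, and un-lift each pair $xv,vy$ along the orientation of $xy$, which changes no net degree and only improves reachability. Two small remarks: your justification that $G$ is a nice supergraph of $H$ (``$v(G)>v(H)$'') is not correct as stated -- all of $G-E(H)$ could attach to $H$ at a single vertex, in which case \Cref{def:weakly_contractible} does not apply (and the extension can genuinely fail); in the paper's applications $H$ always sits inside a graph of high edge-connectivity, so the condition holds, but it deserves an explicit word rather than a cardinality argument.

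The genuine gap is in part \ref{prop:WSC16_lifting_2}, exactly at the point you flag but do not resolve. Your plan is to first build a \emph{strongly connected} $(2k,\beta)$-orientation $D'$ of $G'$ and then pull back. This cannot work in the extremal case $d_{G'}(v)=k$ and $\beta(v)=\pm k$: there $d^+_{D'}(v)-d^-_{D'}(v)\equiv\pm k\pmod{2k}$ together with $d^+_{D'}(v)+d^-_{D'}(v)=k$ forces $v$ to be a source or a sink in $G'$, so no strongly connected $(2k,\beta)$-orientation of $G'$ exists at all, and your intermediate goal is unattainable. The missing idea is that strong connectivity only has to be checked \emph{after} un-lifting, in $G$ itself: orient the $d_{G'}(v)$ edges at $v$ with $d^+-d^-=\beta(v)$ (possible since $|\beta(v)|\le k\le d_{G'}(v)$ and parities agree), build $\beta''$ on $G''$ as you do, take a strongly connected $(2k,\beta'')$-orientation $D''$ of $G''$, and then pass directly to $G$: each lifted arc $x\to y$ of $D''$ is rerouted as $x\to v\to y$, so every directed walk of $D''$ survives in $G$ and, crucially, each un-lifted pair hands $v$ one in-arc and one out-arc regardless of how the $d_{G'}(v)$ edges were split. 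Hence $v$ has positive in- and out-degree in the final orientation of $G$ whenever at least one pair was lifted, and the orientation of $G$ is strongly connected with boundary $\beta$. (If no pair is lifted, i.e.\ $G'=G$, the statement really is delicate when $d_G(v)=k$ and $\beta(v)=\pm k$; in the paper's use of the proposition a nonempty lifting is always performed, so this degenerate case does not arise.) With that replacement, the threshold $d_{G'}(v)\ge k$ is exactly what makes the split $d^\pm=(d_{G'}(v)\pm\beta(v))/2$ nonnegative, answering the calibration question you raise at the end.
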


In~\cite{LSWW23}, all graphs in $\{15 K_2\}\cup\{T_{a,b,c}:a+b+c=30,\delta\geq15\}\cup\{G\colon v(G)=4, e(G)=46,\delta(G)\geq15, \mu(G)\leq 15\}$ were shown to belong to $\mathcal{SZ}_{16}$. By applying~\Cref{prop:SC_SZ_W_graphs} and \Cref{prop:lifting_WSC16}\ref{prop:WSC16_contract}, we have the following results. 

\begin{lemma}\label{lem:WSC_small_graphs}
	\begin{enumerate}[itemsep=-2pt, label=(\roman*)]
		\item\label{lem:W16_small_graphs} The graphs in $\{16 K_2\}\cup\{T_{a,b,c}:a+b+c=32,\delta\geq17\}\cup\{G\colon v(G)=4, e(G)=49,\delta(G)\geq17, \mu(G)\leq 16\}$ belong to $\mathcal{W}_{16}$.   
		\item \label{lem:SC16_small_graphs} The graphs in $\{\alpha K_2:\alpha\geq17\}\cup\{T_{a,b,c}:a+b+c\geq33,\delta\geq17\}\cup\{G\colon v(G)=4, e(G)\geq50,\delta(G)\geq17, \mu(G)\leq 16\}$ belong to $\mathcal{SC}_{16}$.
        \item\label{lem:path_SC16} Every multipath on three vertices with each edge's multiplicity at least $17$ is in $\mathcal{SC}_{16}$.
        \item\label{lem:four_graphs_alpha K2} Let $G$ be a multigraph on four vertices containing $\alpha K_2$ where $\alpha\geq 17$. If $G/\alpha K_2$ has at least $33$ edges and $\delta\geq 17$, then $G$ is in $\mathcal{SC}_{16}$.
	\end{enumerate}
\end{lemma}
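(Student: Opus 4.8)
The plan is to deduce each part from the corresponding $\mathcal{SZ}_{16}$-membership results of~\cite{LSWW23} together with the structural propositions already in hand. For part~\ref{lem:W16_small_graphs}, I would use \Cref{prop:SC_SZ_W_graphs}\ref{prop:SC_SZ_W_graph_2_Wk}: given a target graph $H$ in the stated list and any two distinct vertices $x,y\in V(H)$, I exhibit a Hamiltonian $(x,y)$-path $P_{xy}$ such that $H-E(P_{xy})$ lies in the $\mathcal{SZ}_{16}$-family from~\cite{LSWW23}. Concretely, for $16K_2$ any single edge is a Hamiltonian path and the remaining $15K_2$ is in $\mathcal{SZ}_{16}$; for $T_{a,b,c}$ with $a+b+c=32$ and $\delta\ge 17$ I pick a two-edge Hamiltonian path through the appropriate pair of vertices, chosen so the residual multigraph still has $a'+b'+c'=30$ and $\delta\ge 15$; and for the four-vertex graphs with $49$ edges I pick a three-edge Hamiltonian path so the residual has $46$ edges, $\delta\ge 15$, and $\mu\le 15$. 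In each case one must check that the degree/multiplicity side conditions survive the removal of the path — this is the routine bookkeeping step, and the only subtlety is confirming that the minimum-degree and multiplicity constraints can always be met simultaneously by a suitable choice of $P_{xy}$ (which is where the gap between $17$ and $15$, and between $16$ and $15$, is used).

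For part~\ref{lem:SC16_small_graphs} I would argue similarly but via \Cref{prop:SC_SZ_W_graphs}\ref{prop:SC_SZ_W_graph_1_SCk}: for each $H$ in the list, find a Hamiltonian \emph{cycle} $C$ with $H-E(C)\in\mathcal{SZ}_{16}$. For $\alpha K_2$ with $\alpha\ge 17$ a Hamiltonian "cycle" on two vertices is a pair of parallel edges, leaving $(\alpha-2)K_2\in\mathcal{SZ}_{16}$; for $T_{a,b,c}$ with $a+b+c\ge 33$, $\delta\ge 17$, remove a triangle (one edge from each part) to leave $T_{a',b',c'}$ with $a'+b'+c'\ge 30$ and $\delta\ge 15$; for the four-vertex case remove a $4$-cycle to leave a four-vertex graph with $e\ge 46$, $\delta\ge 15$, $\mu\le 15$. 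Alternatively, since $\mathcal{SC}_k\subseteq\mathcal{W}_k$ trivially fails in the wrong direction, one can instead invoke \Cref{prop:H_Wk_SCk} with part~\ref{lem:W16_small_graphs}: a graph $H'$ in the $\mathcal{SC}_{16}$-list is of the form $H+xy$ for some $H\in\mathcal{W}_{16}$ from part~\ref{lem:W16_small_graphs} and some pair $x,y$, so $H'\in\mathcal{SC}_{16}$ follows. I would present whichever of these two routes keeps the degree bookkeeping cleanest; I expect the direct Hamiltonian-cycle route to be the simplest.

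Parts~\ref{lem:path_SC16} and~\ref{lem:four_graphs_alpha K2} I would obtain by contraction using \Cref{prop:lifting_WSC16}\ref{prop:WSC16_contract}, exactly mirroring the argument already given after \Cref{lem:SZ9_small_graphs} for its analogues: a multipath $P=xuy$ on three vertices with every multiplicity $\ge 17$ contains $17K_2$ (say on $xu$), which is in $\mathcal{SC}_{16}\subseteq\mathcal{W}_{16}$ by part~\ref{lem:SC16_small_graphs}; contracting it yields a multigraph on $\{w,y\}$ with $\mu_{wy}\ge 17$, i.e. some $\alpha K_2$ with $\alpha\ge 17$, again in $\mathcal{SC}_{16}$ by part~\ref{lem:SC16_small_graphs}, so $P\in\mathcal{SC}_{16}$. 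For part~\ref{lem:four_graphs_alpha K2}, if $G$ on four vertices contains $\alpha K_2$ with $\alpha\ge 17$ — which is in $\mathcal{W}_{16}$ — then $G/\alpha K_2$ is a three-vertex multigraph with at least $33$ edges and $\delta\ge 17$, hence a $T_{a,b,c}$ with $a+b+c\ge 33$, $\delta\ge 17$, which is in $\mathcal{SC}_{16}$ by part~\ref{lem:SC16_small_graphs}; applying \Cref{prop:lifting_WSC16}\ref{prop:WSC16_contract} gives $G\in\mathcal{SC}_{16}$.

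The main obstacle is the first step of parts~\ref{lem:W16_small_graphs} and~\ref{lem:SC16_small_graphs}: one must verify, for \emph{every} choice of the prescribed pair of vertices $x,y$ (and over the whole parametrized family of $T_{a,b,c}$'s and four-vertex graphs), that a Hamiltonian path or cycle can be deleted while preserving the exact edge count and the minimum-degree and multiplicity hypotheses needed to land inside the~\cite{LSWW23} families. This is a finite but slightly fiddly case check; everything after it is a mechanical application of \Cref{prop:SC_SZ_W_graphs}, \Cref{prop:H_Wk_SCk}, and \Cref{prop:lifting_WSC16}.
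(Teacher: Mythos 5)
Your proposal follows essentially the same route as the paper: parts (i) and (ii) by deleting a Hamiltonian path or cycle so that the residual graph lies in the $\mathcal{SZ}_{16}$ families of \cite{LSWW23} and then applying the Hamiltonian-path/cycle criterion, and parts (iii) and (iv) by contracting the large multi-edge and applying the contraction proposition, so the argument is correct in substance (the paper likewise treats the degree/multiplicity bookkeeping in (i)--(ii) as routine). The one slip is in part (iv): after contracting $\alpha K_2$, the three-vertex graph $G/\alpha K_2$ need not be a triangle $T_{a,b,c}$, since the two uncontracted vertices may be nonadjacent, leaving a multipath with both multiplicities at least $17$; this is exactly why the paper invokes both (ii) and (iii) at that point, and since you have already established (iii), the fix is a one-line case distinction.
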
 

\begin{proof}
    For graphs in~$\ref{lem:W16_small_graphs}$, we delete an arbitrary Hamiltonian path; for graphs in~$\ref{lem:SC16_small_graphs}$, we delete an arbitrary Hamiltonian cycle. Note that each of the resulting graphs 
is in $\{15 K_2\}\cup\{T_{a,b,c}:a+b+c=30,\delta\geq15\}\cup\{G\colon v(G)=4, e(G)=46,\delta(G)\geq15, \mu(G)\leq 15\}\subset \mathcal{SZ}_{16}$.  So \ref{lem:W16_small_graphs} and \ref{lem:SC16_small_graphs} hold by \Cref{prop:SC_SZ_W_graphs}. 
    
    For~$\ref{lem:path_SC16}$, let $P$ be a multipath with vertex set $\{x,y,z\}$ and $\mu(xy)\geq 17$ and $\mu(yz)\geq 17$. By~$\ref{lem:SC16_small_graphs}$, the two-vertex subgraph $P_{xy}$ induced by $x$ and $y$ is in $\mathcal{SC}_{16}(\subset \mathcal{W}_{16})$ and, moreover,  $P/P_{xy}\in\mathcal{SC}_{16}$. By~\Cref{prop:lifting_WSC16}\ref{prop:WSC16_contract}, $P\in\mathcal{SC}_{16}$.
    
    For~$\ref{lem:four_graphs_alpha K2}$, note that $G$ contains a subgraph $\alpha K_2$ which belongs to  $\mathcal{SC}_{16}$. 
Moreover, $G/\alpha K_2$ has three vertices and at least $33$ edges, and $\delta(G/\alpha K_2)\geq 17$. By~$\ref{lem:SC16_small_graphs}$ and $\ref{lem:path_SC16}$, $G/\alpha K_2$ also belongs to $\mathcal{SC}_{16}$. By~\Cref{prop:lifting_WSC16}\ref{prop:WSC16_contract}, $G\in\mathcal{SC}_{16}$.
\end{proof}

\section[two applications]{Proofs of Theorem~\ref{thm:23_homomorphism_to_C9-restated}$'$ and  Theorem~\ref{thm:23_circular_flow_signed_planar_graphs}}\label{sec:proofs of two theorems}
In this section, we assume the truth of~\Cref{thm:main_statement_SZ9_k8} and use it to prove
Theorem~\ref{thm:23_homomorphism_to_C9-restated}$'$ and \Cref{thm:23_circular_flow_signed_planar_graphs}.
We remark that every graph $G$ that is $\mathcal{N}$-good or $\mathcal{S}$-good satisfies the hypotheses of \Cref{thm:main_statement_SZ9_k8}; thus, such a graph $G$ also satisfies its conclusion. 

Now assuming that~\Cref{thm:main_statement_SZ9_k8} is true, we first obtain~\Cref{thm:partition_SZ9_SC16} below. Part (2) immediately implies (the flow version of)~\Cref{thm:23_circular_flow_signed_planar_graphs}; part (1) implies
a slight weakening of~Theorem~\ref{thm:23_homomorphism_to_C9-restated}$'$, which requires edge-connectivity at least $23$, rather than odd-edge-connectivity at least $23$.  Proving the final version of that result requires one more trick, which we present at the end of this section.

\begin{theorem}\label{thm:partition_SZ9_SC16}
	Let $G$ be a planar graph such that $G\neq K_1$ and $w(G)\geq 0$. 
	\begin{enumerate}[label=(\arabic*)]
		\item\label{thm:partition_SZ9} If $G/\mathcal{P}\not\in \mathcal{N}$ for every partition $\mathcal{P}$, then $G\in \mathcal{SZ}_9$.
        \item\label{thm:partition_SC16} If $G/\mathcal{P}\notin \mathcal{W}^*\cup\mathcal{N}$ for every partition $\mathcal{P}$ (that is, $G$ is $\mathcal{S}$-good), then  $2G\in \mathcal{SC}_{16}$. 
	\end{enumerate}
\end{theorem}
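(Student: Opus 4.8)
The plan is to prove both parts by induction on $e(G)$, using \Cref{thm:main_statement_SZ9_k8} as the engine: its hypotheses are precisely those of part~\ref{thm:partition_SZ9}, and they are also implied by ``$G$ is $\mathcal{S}$-good'', so it applies in both settings. The two inductions run in parallel. The only extra ingredient needed for part~\ref{thm:partition_SC16} is the fact that \emph{$H$ $\mathcal{N}$-good implies $2H\in\mathcal{W}_{16}$}; since the finite, small-order families $\mathcal{N}$ and $\mathcal{W}^*$ contain no graph on five or more vertices, being $\mathcal{N}$-good and being $\mathcal{S}$-good coincide for $v(H)\ge5$, so there this fact is just part~\ref{thm:partition_SC16} applied to the smaller graph $H$ together with $\mathcal{SC}_{16}\subseteq\mathcal{W}_{16}$, while for $v(H)\le4$ it is a finite check against \Cref{lem:WSC_small_graphs}.

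For the inductive step ($v(G)\ge5$) case~\ref{thm:4_main_statement_SZ9_k8} of \Cref{thm:main_statement_SZ9_k8} is impossible, so one of~\ref{thm:1_main_statement_SZ9_k8}--\ref{thm:3_main_statement_SZ9_k8} holds; in each I would peel off the indicated smaller graph(s) --- $H$ and $G/H$; $H$ and $G'/H$; or $G''-v$ --- check they still satisfy the relevant recursive hypothesis, apply the inductive hypothesis, and reassemble. For part~\ref{thm:partition_SZ9} the reassembly is immediate from \Cref{prop:SZ9_lifting} (parts~\ref{prop:SZ9_contract}, \ref{prop:SZ9_lifting_1}, \ref{prop:SZ9_lifting_2} in cases~\ref{thm:1_main_statement_SZ9_k8}, \ref{thm:2_main_statement_SZ9_k8}, \ref{thm:3_main_statement_SZ9_k8} respectively); for part~\ref{thm:partition_SC16} I would run the same argument on $2G$, using $2(G/H)=2G/2H$ and that lifting in $G$ then doubling equals doubling then lifting, and reassemble via \Cref{prop:lifting_WSC16} (parts~\ref{prop:WSC16_contract}, \ref{prop:WSC16_lifting_1}, \ref{prop:WSC16_lifting_2}), with $2H\in\mathcal{W}_{16}$ in the role of the ``weakly contractible'' piece and the complementary piece lying in $\mathcal{SC}_{16}$ by induction. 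Three bookkeeping facts make the recursive hypotheses go through: contraction never lowers the weight, so $w(G/H)\ge w(G)\ge0$; every partition of $V(G/H)$ pulls back to a nontrivial partition of $V(G)$ (since $|V(H)|\ge2$) realizing the same quotient, whence $G/H$ inherits ``$\mathcal{S}$-good'' or ``no contraction lies in $\mathcal{N}$'' from $G$, and $\mathcal{S}$-good implies $\mathcal{N}$-good; and in case~\ref{thm:3_main_statement_SZ9_k8}, contracting $V(G'')\setminus\{v\}$ to a point yields $d_{G''}(v)K_2$, which must avoid $\mathcal{N}$, forcing $d_{G''}(v)\ge8$ (and $d_{2G''}(v)\ge16$), exactly the degree condition required by \Cref{prop:SZ9_lifting}\ref{prop:SZ9_lifting_2} and \Cref{prop:lifting_WSC16}\ref{prop:WSC16_lifting_2}. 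The handful of degenerate sub-cases in which the reducible subgraph is spanning (so the quotient is $K_1$) I would dispose of by closure of $\mathcal{SZ}_9$ and $\mathcal{SC}_{16}$ under adding parallel edges, together with \Cref{prop:H_Wk_SCk}.

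For the base case ($v(G)\le4$): contracting all but one vertex shows $\delta(G)\ge8$ for part~\ref{thm:partition_SZ9} and $\delta(G)\ge9$ for part~\ref{thm:partition_SC16}, and $w(G)\ge0$ applied to the trivial partition forces $e(G)$ to be large enough (for instance $e(G)\ge25$ when $v(G)=4$, by \Cref{ob:weight_small_graphs}). A short case analysis on whether $G$ contains a digon of large multiplicity then places $G$ (resp.\ $2G$) in one of the explicit families of \Cref{lem:SZ9_small_graphs} (resp.\ \Cref{lem:WSC_small_graphs}): when the multiplicity is small the relevant part of those lemmas applies directly, and when $G$ contains $\alpha K_2$ with $\alpha\ge8$ (resp.\ $\ge9$) I would contract it and combine the $\mathcal{N}$-/$\mathcal{S}$-good conditions with the weight bounds to pin the resulting three-vertex quotient down to a multipath with all multiplicities at least $9$ or a triangle $T_{p,q,r}$ with $p+q+r\ge17$ and $\delta\ge9$ --- the borderline case $p+q+r=16$, $\delta\ge9$ being excluded precisely because such a graph lies in $\mathcal{W}^*$ --- so that the double of this quotient lies in $\mathcal{SC}_{16}$, and finish with \Cref{prop:SZ9_lifting}\ref{prop:SZ9_contract} / \Cref{prop:lifting_WSC16}\ref{prop:WSC16_contract}. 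This is the one place where the exact additive constant $42$ in the weight function is needed.

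I expect the main obstacle to be exactly this last interface: the base-case verification is fussy precisely because the graphs of $\mathcal{W}^*$ satisfy the desired conclusion only barely, so one must match the multiplicity and degree thresholds appearing in \Cref{lem:SZ9_small_graphs} and \Cref{lem:WSC_small_graphs} against the consequences of $\mathcal{N}$- and $\mathcal{S}$-goodness; by comparison the inductive step is nearly mechanical once \Cref{thm:main_statement_SZ9_k8}, \Cref{prop:SZ9_lifting}, and \Cref{prop:lifting_WSC16} are available. (Upgrading part~\ref{thm:partition_SZ9} to the full statement of Theorem~\ref{thm:23_homomorphism_to_C9-restated}$'$, which weakens the hypothesis from edge-connectivity to odd-edge-connectivity, requires the additional trick presented later, but that lies outside the present theorem.)
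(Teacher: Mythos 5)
Your proposal is correct and follows essentially the same route as the paper: take a minimal counterexample (or induct), feed it to \Cref{thm:main_statement_SZ9_k8}, reassemble via \Cref{prop:SZ9_lifting} and \Cref{prop:lifting_WSC16} in cases (1)--(3), and settle $v(G)\le 4$ by the explicit small-graph families of \Cref{lem:SZ9_small_graphs} and \Cref{lem:WSC_small_graphs}, with the degree bounds $\delta\ge 8$ (resp.\ $\ge 9$) extracted exactly as the paper does. The only cosmetic difference is in part~(2): where you propose the blanket claim that $H$ being $\mathcal{N}$-good forces $2H\in\mathcal{W}_{16}$, verified by a small-order check, the paper uses the slightly leaner dichotomy that an $\mathcal{N}$-good $H$ is either $\mathcal{S}$-good (so $2H\in\mathcal{SC}_{16}\subseteq\mathcal{W}_{16}$ by induction) or lies in $\mathcal{W}^*$ (so $2H\in\mathcal{W}_{16}$ by \Cref{lem:WSC_small_graphs}\ref{lem:W16_small_graphs}); both versions are sound, and in fact $\mathcal{N}$-good and $\mathcal{S}$-good already coincide for $v(H)\ge 4$, which would shrink your finite check to the two- and three-vertex cases.
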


This result is sharp in the following sense.  For each $G\in \mathcal{N}$, it is straightforward to construct a boundary showing
that $G\notin \mathcal{SZ}_9$.  Similarly, for each $G\in \mathcal{W}^*\cup\mathcal{N}$ it is straightforward to construct a 
boundary showing that $2G\notin \mathcal{SC}_{16}$. 

\begin{proof}
\ref{thm:partition_SZ9} Assume to the contrary that $G$ is a minimum counterexample with $v(G)+e(G)$ being minimized. Note that $G$ satisfies all the hypotheses of \Cref{thm:main_statement_SZ9_k8} and thus its conclusion must hold for $G$. 
We consider the four cases. 
\begin{itemize}[itemsep=-2pt]
    \item If \Cref{thm:main_statement_SZ9_k8}\ref{thm:1_main_statement_SZ9_k8} holds, then $G$ contains an $\mathcal{N}$-good proper subgraph $H$. By the definition of $\mathcal{N}$-good graphs, $H/\mathcal{P}\notin\mathcal{N}$ for every partition $\mathcal{P}$ of $V(H)$. As $H$ is a proper subgraph of $G$, by the minimality of $G$, the graph $H\in\mathcal{SZ}_{9}$. Furthermore, note that $w(G/H)\geq 0$, that $G/H$ has no partition $\mathcal{P}$ such that $(G/H)/\mathcal{P}\in\mathcal{N}$, and that $G/H$ has fewer vertices than $G$ (as $v(H)\geq 2$). Again since $G$ is a minimal counterexample, $G/H\in\mathcal{SZ}_{9}$. By~\Cref{prop:SZ9_lifting}\ref{prop:SZ9_contract}, $G\in\mathcal{SZ}_{9}$, a contradiction. 
    
    \item If~\Cref{thm:main_statement_SZ9_k8}\ref{thm:2_main_statement_SZ9_k8} holds, then the resulting graph $G'$ formed by some lifting contains an $\mathcal{N}$-good subgraph $H$ and $G'/H$ is $\mathcal{S}$-good. It is easily observed that each of $H$ and $G'/H$ satisfies the conditions of this theorem but has fewer edges than $G$. Hence both $H$ and $G'/H$ belong to $\mathcal{SZ}_{9}$. By~\Cref{prop:SZ9_lifting}\ref{prop:SZ9_lifting_1}, $G\in\mathcal{SZ}_{9}$, a contradiction. 
    
    \item If~\Cref{thm:main_statement_SZ9_k8}\ref{thm:3_main_statement_SZ9_k8} holds, then the resulting graph $G''$ formed from $G$ by lifting at a vertex $v$ satisfies that $G''-v$ is $\mathcal{S}$-good, and also that $G''/\mathcal{P}\notin \mathcal{N}$ for every partition $\mathcal{P}$ of $V(G'')$. We first claim that $G''-v\in\mathcal{SZ}_{9}$ because it satisfies the conditions of this theorem (as it is $\mathcal{S}$-good) but has fewer vertices than $G$. Since $G''/\mathcal{P}\notin\mathcal{N}$ for every partition $\mathcal{P}$ of $V(G'')$, we know that $d_{G''}(v)\geq 8$. By~\Cref{prop:SZ9_lifting}\ref{prop:SZ9_lifting_2}, $G\in\mathcal{SZ}_{9}$, which is a contradiction.
    
    \item If~\Cref{thm:main_statement_SZ9_k8}\ref{thm:4_main_statement_SZ9_k8} holds, then we assume that $2\leq v(G)\leq 4$. We will describe $G$ explicitly and use~\Cref{lem:SZ9_small_graphs} to show that $G\in\mathcal{SZ}_{9}$. That $G/\mathcal{P}\notin \{\alpha K_2: \alpha\leq 7\}\subset \mathcal{N}$ for every partition $\mathcal{P}$ implies that $\delta(G)\geq 8$. If $v(G)=2$, then $G$ is $\alpha K_2$ with $\alpha\geq 8$. If $v(G)=3$, then $G$ is either a multipath with $\delta\geq 8$ or a multitriangle $T_{a,b,c}$ with $a+b+c\geq 16$ as $G/\mathcal{P}\notin\{T_{a,b,c},a+b+c\leq 15\}$ for every partition $\mathcal{P}$. If $v(G)=4$, then $e(G)\geq 25$ since $w(G)\geq 0$. Furthermore, if $G$ contains $\alpha K_2$ with $\alpha\geq 8$, then $G/\alpha K_2$ has at least $16$ edges since $G/\alpha K_2 \notin\{T_{a,b,c},a+b+c\leq 15\}$ and $G/\alpha K_2$ has minimum degree at least $8$; if $\mu(G)\leq 7$, then $G$ satisfies that $e(G)\geq 25$, $\mu(G)\leq 7$, and $\delta(G)\geq 8$. According to~\Cref{lem:SZ9_small_graphs}, each of the graphs $G$ described above belongs to $\mathcal{SZ}_{9}$.
\end{itemize}
		
\ref{thm:partition_SC16} Let $G$ be a counterexample minimizing $v(G)+e(G)$. It means  $2G\notin\mathcal{SC}_{16}$. Note that $G$ is $\mathcal{S}$-good and thus the conclusion of \Cref{thm:main_statement_SZ9_k8} holds for $G$. Similarly we consider four cases, each leading to the contradiction that $2G\in\mathcal{SC}_{16}$.
\begin{itemize}[itemsep=-2pt]
    \item If~\Cref{thm:main_statement_SZ9_k8}\ref{thm:1_main_statement_SZ9_k8} holds, then $G$ contains an $\mathcal{N}$-good proper subgraph $H$. By hypothesis, $w(G/H)\geq w(G)\geq 0$; and also $(G/H)/\mathcal{P}\notin\mathcal{N}\cup\mathcal{W}^*$ for every partition $\mathcal{P}$. Thus, $G/H$ is $\mathcal{S}$-good. So $(2G)/(2H)\in\mathcal{SC}_{16}$, by the minimality of $G$. If $H/\mathcal{P}\notin\mathcal{W}^*$ for every partition $\mathcal{P}$, then $H$ is $\mathcal{S}$-good; thus the minimality of $G$ gives $2H\in\mathcal{SC}_{16}\subseteq \mathcal{W}_{16}$. If $H/\mathcal{P}_0\in \mathcal{W}^*$ for the trivial partition $\mathcal{P}_0$, then $H\in\mathcal{W}^*$. In this case $2H\in \{16K_2\}\cup\{T_{a,b,c}:a+b+c=32,\delta\geq 18\}$ and $2H\in\mathcal{W}_{16}$ by \Cref{lem:WSC_small_graphs}\ref{lem:W16_small_graphs}. Now by~\Cref{prop:lifting_WSC16}\ref{prop:WSC16_contract} we get  $2G\in\mathcal{SC}_{16}$, a contradiction.
    
    \item If \Cref{thm:main_statement_SZ9_k8}\ref{thm:2_main_statement_SZ9_k8} holds, then $G'/H$ is $\mathcal{S}$-good; the minimality of $G$ gives $(2G')/(2H)\in\mathcal{SC}_{16}\subseteq \mathcal{W}_{16}$. Similarly, if $H/\mathcal{P}\notin\mathcal{W}^*$ for every partition $\mathcal{P}$, then $2H\in\mathcal{SC}_{16}$. But if $H/\mathcal{P}_0\in \mathcal{W}^*$ for the trivial partition $\mathcal{P}_0$, then $H\in\mathcal{W}^*$ by definition; as above, $2H\in\mathcal{W}_{16}$. In both cases, by \Cref{prop:lifting_WSC16}\ref{prop:WSC16_lifting_1} we get $2G\in\mathcal{SC}_{16}$, a contradiction.

    \item If \Cref{thm:main_statement_SZ9_k8}\ref{thm:3_main_statement_SZ9_k8} holds, then $G''-v$ is $\mathcal{S}$-good. Hence $2(G''-v)\in\mathcal{SC}_{16}$, by the minimality of $G$. Since $G''/\mathcal{P}\notin\mathcal{N}$ for every partition $\mathcal{P}$, we have $d_{2G''}(v)\geq 16$. Thus by~\Cref{prop:lifting_WSC16}\ref{prop:WSC16_lifting_2} we get $2G\in\mathcal{SC}_{16}$, a contradiction.
    
    \item If \Cref{thm:main_statement_SZ9_k8}\ref{thm:4_main_statement_SZ9_k8} holds, then $2\leq v(G)\leq 4$. We will describe $G$ explicitly and use \Cref{lem:WSC_small_graphs} to show that $2G\in \mathcal{SC}_{16}$. Since $G/\mathcal{P}\notin \mathcal{W}^*\cup\mathcal{N}$ for every partition $\mathcal{P}$, $\delta(G)\geq 9$. If $v(G)=2$, then $2G=\alpha K_2$ with $\alpha\geq 18$. If $v(G)=3$, then $2G$ is either a multipath with $\delta(2G)\geq 18$ or a multitriangle $T_{a,b,c}$ with $a+b+c\geq 34$ since $G/\mathcal{P}\notin\{T_{a,b,c},a+b+c\leq 16\}$ for every partition $\mathcal{P}$. If $v(G)=4$, then $e(2G)=2e(G)\geq 50$ since $w(G)\geq 0$. If $2G$ has a subgraph $\alpha K_2$ with $\alpha\geq 17$, then $2G/\alpha K_2$ contains at least $34$ edges because $G$ has no partition $\mathcal{P}$ such that $G/\mathcal{P}\in\{T_{a,b,c},a+b+c\leq 16\}$ and, furthermore, such $2G/\alpha K_2$ has minimum degree at least $18$. If $\mu(2G)\leq 16$, then $2G$ satisfies that $e(2G)\geq 50$, $\delta(2G)\geq 18$ and $\mu(2G)\leq 16$. By \Cref{lem:WSC_small_graphs}, in each case $2G\in \mathcal{SC}_{16}$.
\end{itemize}
This completes the proof of the theorem.
\end{proof}

\begin{corollary}\label{cor:23_circula_flow}
    If $G$ is a 23-edge-connected planar graph, then:
    \vspace{-.08in}
	\begin{enumerate}[itemsep=-2pt, label=(\arabic*)]
		\item \label{cor:23_circular_9/4} $G\in\mathcal{SZ}_{9}$; and
		 
		\item \label{cor:23_circular_16/7} $2G\in \mathcal{SC}_{16}$. 
	\end{enumerate}
\end{corollary}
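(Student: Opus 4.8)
The plan is to derive both parts from Theorem \ref{thm:partition_SZ9_SC16} by verifying that a $23$-edge-connected planar graph $G$ satisfies the relevant hypotheses of that theorem. Recall from the discussion after Definition \ref{def:weight_function} that if $G$ is $23$-edge-connected, then for every partition $\mathcal{P}=\{P_1,\dots,P_t\}$ of $V(G)$ each term $d(P_i)$ in the weight sum is at least $23$, so $w_G(\mathcal{P})\ge 23t-23t+42=42$; taking the minimum over all partitions gives $w(G)\ge 42\ge 0$. In particular $G\ne K_1$, so the opening requirements of Theorem \ref{thm:partition_SZ9_SC16} are met.

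For part \ref{cor:23_circular_9/4}, I would show that $G/\mathcal{P}\notin\mathcal{N}$ for every partition $\mathcal{P}$, and then apply Theorem \ref{thm:partition_SZ9_SC16}\ref{thm:partition_SZ9}. The key point is that edge contraction preserves edge cuts: if $\mathcal{P}$ has parts $P_1,\dots,P_t$ then the edges of $G/\mathcal{P}$ incident to the vertex coming from $P_i$ are exactly the edges of $[P_i,P_i^c]$ in $G$, so $d_{G/\mathcal{P}}(P_i)=d_G(P_i)\ge 23$; thus every vertex of $G/\mathcal{P}$ has degree at least $23$, and every edge cut in $G/\mathcal{P}$ of size coming from a vertex-partition is at least $23$. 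Since every graph in $\mathcal{N}$ is either some $\alpha K_2$ with $\alpha\le 7$ (minimum degree $\le 7$) or some $T_{a,b,c}$ with $a+b+c\le 15$ (so the cut separating a single vertex $z$ from $\{x,y\}$ has size $a+c\le 15$, say, and in any case $\delta\le\lfloor 2\cdot 15/3\rfloor=10<23$), no contraction $G/\mathcal{P}$ can lie in $\mathcal{N}$. Hence Theorem \ref{thm:partition_SZ9_SC16}\ref{thm:partition_SZ9} applies and $G\in\mathcal{SZ}_9$.

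For part \ref{cor:23_circular_16/7}, I would similarly verify that $G/\mathcal{P}\notin\mathcal{N}\cup\mathcal{W}^*$ for every partition $\mathcal{P}$ and then invoke Theorem \ref{thm:partition_SZ9_SC16}\ref{thm:partition_SC16}. By the same contraction-preserves-cuts argument, every vertex of $G/\mathcal{P}$ has degree at least $23$. The only extra work is handling $\mathcal{W}^*=\{8K_2\}\cup\{T_{a,b,c}\colon a+b+c=16,\ \delta\ge 9\}$: the graph $8K_2$ has minimum degree $8<23$, and each $T_{a,b,c}$ with $a+b+c=16$ has some edge-cut (isolating one vertex) of size at most $16<23$; so again no $G/\mathcal{P}$ lands in $\mathcal{W}^*$ either. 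Thus $G$ is $\mathcal{S}$-good and Theorem \ref{thm:partition_SZ9_SC16}\ref{thm:partition_SC16} gives $2G\in\mathcal{SC}_{16}$.

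None of the steps is a genuine obstacle; this corollary is a routine specialization. The only place demanding a moment's care is the bookkeeping that $d_{G/\mathcal{P}}(\text{vertex from }P_i)=d_G(P_i)$ — i.e.\ that contracting a vertex subset does not shrink the cut around it — together with a finite check that every member of $\mathcal{N}\cup\mathcal{W}^*$ has some separating edge-cut of size at most $16<23$, which rules all of them out simultaneously.
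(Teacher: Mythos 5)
Your proposal is correct and takes essentially the same approach as the paper: derive $w(G)\ge 42$ from $23$-edge-connectivity and then apply \Cref{thm:partition_SZ9_SC16} in both parts. The only cosmetic difference is that the paper excludes $G/\mathcal{P}\in\mathcal{N}\cup\mathcal{W}^*$ by citing \Cref{ob:partition_bound_notin}, whereas you check directly that every vertex of $G/\mathcal{P}$ has degree $d_G(P_i)\ge 23$ while every member of $\mathcal{N}\cup\mathcal{W}^*$ has minimum degree below $23$; both verifications are immediate and equivalent in substance.
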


\begin{proof}
	For~\ref{cor:23_circular_9/4}, since $G$ is $23$-edge-connected, for every partition $\mathcal{P}$ with parts $P_1,P_2,\ldots,P_t$, we have $d_{G}(P_i)\geq 23$ for all $i\in [t]$. Thus $w(G)\geq 23t-23t+42=42$, which implies that $G/\mathcal{P}\notin\mathcal{N}$ for every partition $\mathcal{P}$ by~\Cref{ob:partition_bound_notin}\ref{ob:3_12_N}. By \Cref{thm:partition_SZ9_SC16}\ref{thm:partition_SZ9}, $G\in\mathcal{SZ}_{9}$.

    For~\ref{cor:23_circular_16/7}, since $G$ is $23$-edge-connected, again $w(G)\geq 42$ and $G/\mathcal{P}\notin\mathcal{W}^*\cup\mathcal{N}$ for every partition $\mathcal{P}$ by~\Cref{ob:partition_bound_notin}\ref{ob:1_14_WN}. It follows from \Cref{thm:partition_SZ9_SC16}\ref{thm:partition_SC16} that $2G\in\mathcal{SC}_{16}$. 
\end{proof}

\Cref{thm:23_circular_flow_signed_planar_graphs} is a straightforward consequence of the above result.  \Cref{cor:23_circula_flow}\ref{cor:23_circular_16/7} implies that $2G$ admits a strongly connected $(32,\beta)$-orientation for every given {$\mathbb{Z}_{32}$-pc-boundary} $\beta$ with $\beta(v)\equiv 16\cdot p^+(v)\pmod{32}$. By \Cref{thm:main_flow_orientation}, 
for every signature $\sigma$ of $G$ 
we get $\phi_c(G,\sigma)<\frac{16}{7}$.

To obtain Theorem~\ref{thm:23_homomorphism_to_C9-restated}$'$, we need Zhang's splitting lemma~\cite{Z02} below. 

\begin{lemma}\label{lem:splitting}\emph{\cite{Z02}}
Let $G=(V,E)$ be a graph with odd-edge-connectivity $\lambda$. Assume there is a vertex $v\in V(G)$ such that $d(v)\notin\{2,\lambda\}$. Arbitrarily label the edges of $G$ incident with $v$ as $\{e_1,e_2,\ldots,e_{d(v)}\}$. Then there is an integer $i\in \{1,2,\ldots,d(v)\}$ such that the new graph formed from $G$ by splitting $e_i$ and $e_{i+1}$ (indices $i, i+1$ are taken modulo $d(v)$) away from $v$ remains of odd-edge-connectivity $\lambda$.
\end{lemma}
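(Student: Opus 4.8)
The plan is to recast ``preserves odd-edge-connectivity $\lambda$'' as a statement about which edge cuts shrink under the lift, and then to locate a suitable consecutive pair by uncrossing the cuts of size $\lambda$. First note that $\lambda$, being the size of a smallest odd edge cut, is odd. Lifting a pair $e=vx$, $f=vy$ at $v$ changes $d_G(Z)$ only for sets $Z$ that separate $v$ from both $x$ and $y$: writing $Z$ so that $v\in Z$, we get $d_{G'}(Z)=d_G(Z)-2$ when $x,y\notin Z$ and $d_{G'}(Z)=d_G(Z)$ otherwise. As this change is even, a vertex set induces an odd cut of $G$ if and only if it does of the lifted graph $G'$, so the lift keeps odd-edge-connectivity equal to $\lambda$ exactly when no cut of size $\lambda$ shrinks below $\lambda$; equivalently, when the pair $\{e,f\}$ is \emph{safe}, meaning that no \emph{tight} set $Z$ (one with $v\in Z\subsetneq V$ and $d_G(Z)=\lambda$) has both non-$v$ endpoints of $e$ and $f$ outside it. (That some cut of size $\lambda$ does survive, giving odd-edge-connectivity $\le\lambda$, is an easy consequence of safety.) Hence it suffices to prove that some consecutive pair $\{e_i,e_{i+1}\}$ is safe.

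Assume not: for each $i$ there is a tight set $X_i$ with both $e_i$ and $e_{i+1}$ leaving $X_i$, so each $e_i$ leaves both $X_{i-1}$ and $X_i$. I would uncross these around the cycle to obtain a single tight set $X$ with at most one edge at $v$ entering it --- that is, with at least $d(v)-1$ edges at $v$ leaving it. Granting this, the endgame is short. Set $X'=X\setminus\{v\}$; since $d(v)\neq\lambda=d_G(X)$, we have $X\neq\{v\}$, so $\emptyset\neq X'\subsetneq V$. With $r\in\{0,1\}$ the number of edges at $v$ entering $X$, counting edges across $X'$ gives $d_G(X')=d_G(X)-d(v)+2r=\lambda-d(v)+2r$. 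If $d(v)$ is odd, the star cut $[\{v\},V\setminus\{v\}]$ is odd, so $\lambda\le d(v)$, while $d_G(X)\ge d(v)-1$ gives $\lambda\ge d(v)-1$; as $\lambda$ and $d(v)$ are both odd this forces $\lambda=d(v)$, contradicting the hypothesis. If $d(v)$ is even, then $d(v)\ge 4$ because $d(v)\neq 2$, so $d_G(X')=\lambda-d(v)+2r\le\lambda-2<\lambda$, whereas $d_G(X')\equiv\lambda+d(v)\equiv 1\pmod 2$; thus $X'$ is an odd cut smaller than $\lambda$, a contradiction. So no such family exists, and with the first paragraph the lemma follows.

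The main obstacle is the uncrossing just invoked. For tight sets $A,B$ both containing $v$, submodularity gives $d(A)+d(B)\ge d(A\cap B)+d(A\cup B)$ and posimodularity gives $d(A)+d(B)\ge d(A\setminus B)+d(B\setminus A)$, but neither at once yields a tight set, since we control only \emph{odd} cuts and an even cut can be arbitrarily small: if $d(A\cap B)$ (equivalently $d(A\cup B)$) is even it may fall below $\lambda$. The remedy is to track the parity of $|T\cap(A\cap B)|$, where $T$ is the set of odd-degree vertices. When it is odd, the submodular inequality makes both $A\cap B$ and $A\cup B$ tight; when it is even, $d(A\setminus B)$ and $d(B\setminus A)$ turn out odd, so the posimodular inequality makes $V\setminus(B\setminus A)$ and $V\setminus(A\setminus B)$ tight sets through $v$. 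Taking each $X_i$ minimal and applying this dichotomy to successive overlapping pairs $X_{i-1},X_i$ along the cyclic order $e_1,\dots,e_{d(v)}$, so that the set of edges at $v$ forced to leave grows by one at each step, produces the tight set $X$. This parity bookkeeping is precisely what is carried out in~\cite{Z02}, and I would follow that argument.
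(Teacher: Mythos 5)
The paper gives no proof of this lemma: it is quoted directly from \cite{Z02}, so there is no in-paper argument to compare against, and your proposal must be judged on its own. Its frame is sound. Splitting a pair at $v$ changes every cut by $0$ or $-2$, so cut parities are preserved and it indeed suffices to find a consecutive pair avoided by every tight set through $v$; and your endgame is correct: from a tight set $X\ni v$ with at most one edge at $v$ entering it, the parity case analysis on $d(v)$, using $d(v)\notin\{2,\lambda\}$, yields the contradiction exactly as you say.

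The gap is in the uncrossing that is supposed to produce $X$. Your parity dichotomy is true as stated, but its even branch does not drive the induction: when $|T\cap(A\cap B)|$ is even, the tight sets it yields, $V\setminus(A\setminus B)$ and $V\setminus(B\setminus A)$, are supersets of $B$ and of $A$ respectively, and in particular they contain the shared endpoint $u_i$ of $e_i$ (which lies outside $A\cup B$); so the invariant ``the set of edges at $v$ forced to leave grows by one at each step'' is lost there, not extended. The missing idea is that in this application the even branch cannot occur at all. Take $A=X_{i-1}$ and $B=X_i$, tight sets through $v$ with the endpoint $u_i$ of $e_i$ outside $A\cup B$, neither containing the other. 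If $A\setminus B$ and $B\setminus A$ were odd cuts, each of size at least $\lambda$, then $d(A)+d(B)=2\lambda$ would force equality in posimodularity; but the exact identity $d(A)+d(B)=d(A\setminus B)+d(B\setminus A)+2e\bigl(A\cap B,\,V\setminus(A\cup B)\bigr)$ together with the edge $e_i$ joining $v\in A\cap B$ to $u_i\in V\setminus(A\cup B)$ makes the last term positive, a contradiction. Hence for consecutive bad sets only the submodular branch (or a containment) survives, $A\cap B$ is tight and avoids $u_{i-1},u_i,u_{i+1}$, and iterating around $v$ gives the desired $X$. As written, your proposal instead defers precisely this step to \cite{Z02} (``I would follow that argument''), so the one place where the real difficulty lies is not yet proved.
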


\smallskip
\noindent
\emph{Proof of Theorem~\ref{thm:23_homomorphism_to_C9-restated}\,$'$}. We will prove that every odd-$23$-edge-connected planar graph admits a modular $9$-orientation. Assume to the contrary that $G$ is a counterexample minimizing $v(G)$. If $\delta(G)<23$, then there exists $v$ with $d(v)<23$ and $d(v)$ even.  We label the edges incident to $v$ in their (cyclic) order around $v$ in the plane embedding of $G$.  By \Cref{lem:splitting}, we can lift off all of these edges from $v$ (in pairs) 
such that the resulting graph $G'$ is again odd-23-edge-connected.  Thus, $G'$ is a smaller counterexample, contradicting our choice of $G$.  Hence, we assume that $\delta(G)\geq 23$. 

If $G$ is $23$-edge-connected, then \Cref{cor:23_circula_flow}\ref{cor:23_circular_9/4} implies that $G\in\mathcal{SZ}_{9}$; in particular, $G$ admits a modular $9$-orientation. Thus we assume that $[X, X^c]$ is an edge-cut where $d(X)< 23$ and $|X|$ is minimized. Note that $|X|\geq 2$ and for every subset $X'\subsetneq X$, $d(X')\geq 23$. Let $H:=G[X]$. By the minimality of $X$, $H$ is connected. For every partition $\mathcal{Q}$ of $V(H)$ with parts $Q_1,Q_2,\ldots,Q_t$, we have $w_H(\mathcal{Q})= \sum_{i=1}^{t}d_H(Q_i)-23t+42=(\sum_{i=1}^{t}d_G(Q_i)-d_G(X))-23t+42\geq 23t-23-23t+42=19$. Thus $w(H)\geq 19$, which implies that $H/\mathcal{P}\notin\mathcal{N}$ for every partition $\mathcal{P}$ by \Cref{ob:partition_bound_notin}\ref{ob:3_12_N}. By \Cref{thm:partition_SZ9_SC16}\ref{thm:partition_SZ9}, $H\in\mathcal{SZ}_{9}$. As $G$ is a minimum counterexample and $v(H)=|X|\geq 2$, $G/H$ admits a modular $9$-orientation. Hence, $G$ has a modular $9$-orientation by \Cref{prop:SZ9_lifting}\ref{prop:SZ9_contract_modular}, a contradiction. \qed

\section{Main theorem}\label{sec:main theorem}

This section is devoted to proving \Cref{thm:main_statement_SZ9_k8}. 
Below, we recall its statement. 
But first we remind the reader 
of the definition of the weight function, given in~\Cref{equ:weight_function}, as 
well as the notions of $\mathcal{N}$-good and $\mathcal{S}$-good from \Cref{def:graph_family}.

\smallskip
\noindent
\textbf{{\Cref*{thm:main_statement_SZ9_k8}}.} \textit{Given a planar graph $G$, if $G\neq K_1$, $w(G)\geq0$, and $G/\mathcal{P}\notin \mathcal{N}$ for any partition $\mathcal{P}$, then at least one of the following 4 statements holds.
	\begin{enumerate}[itemsep=-2pt, label=(\arabic*)]
		\item $G$ contains an $\mathcal{N}$-good proper subgraph.
		\item $G$ admits some lifting such that the resulting graph $G'$ contains an $\mathcal{N}$-good subgraph $H$ and $G'/H$ is $\mathcal{S}$-good.
        \item $G$ admits some lifting at a vertex $v$ such that for the resulting graph $G''$ we know $G''-v$ is 
            $\mathcal{S}$-good, and $G''/\mathcal{P}\notin \mathcal{N}$ for every partition $\mathcal{P}$ of $V(G'')$.  
		\item $v(G)\leq 4$.
\end{enumerate}}

In the rest of the proof we will frequently be reasoning about graphs that are $\mathcal{N}$-good or $\mathcal{S}$-good. So it
is helpful to recall the following observation.

\noindent
\textbf{{\Cref*{ob:W*_N_good}}.} 
\textit{
    Every $\mathcal{S}$-good graph is also $\mathcal{N}$-good. Every graph in $\mathcal{W}^*$ is $\mathcal{N}$-good (but not $\mathcal{S}$-good).}

We also introduce another $\mathcal{N}$-good graph: $6C_4^+$, which is used in the next subsection. We write $6C_4^+$ to denote the graph formed from $6C_4$ by adding one more parallel edge between two adjacent vertices. 
Clearly, $v(6C_4^+)=4$, $e(6C_4^+)=25$, and $\delta(6C_4^+)=12$. 
Furthermore, by~\Cref{ob:weight_small_graphs}, $w(6C_4^+)=2e(6C_4^+)-50=0$. Obviously $6C_4^+\notin \mathcal{N}$. For every nontrivial partition
$\mathcal{P}$ of $6C_4^+$, we have $6C_4^+/\mathcal{P}\in\{\alpha K_2:\alpha \ge 12\}\cup\{T_{a,b,c}:a+b+c\ge 18, \delta\ge 12\}$.
Thus, $6C_4^+$ is an $\mathcal{N}$-good graph.

To prove~\Cref{thm:main_statement_SZ9_k8}, we assume the result is false and let $G$ be a counterexample minimizing $v(G)+e(G)$. 
Note that $v(G)\geq 5$ and $G$ contains no $\mathcal{N}$-good proper subgraph. In the next subsection, we prove our lower bound on the edge connectivity of this minimal counterexample, and find some configurations forbidden in $G$.  Finally, in \Cref{sec:discharging} 
we use the discharging method to complete the proof of \Cref{thm:main_statement_SZ9_k8}. 

\subsection{Forbidden configurations}

Recall that our minimal counterexample $G$ satisfies $w(G)\geq 0$. Next we prove a useful lemma, called the 
\emph{gap lemma}, indicating that for any nontrivial partition $\mathcal{P}$ of $V(G)$ the weight value $w_{G}(\mathcal{P})$ is much larger than $0$. The gap lemma allows us to lift edge pairs and guarantee that the resulting graph still has a non-negative weight. 
We use this lemma frequently in the rest of the proof.   We recall that a partition $\mathcal{P}$ \emph{has 
type $(p_1^+,p_2^+,*)$} when $\mathcal{P}$ has parts $P_1,P_2,\ldots$ with $|P_1|\ge p_1$, $|P_2|\ge p_2$, and 
all other parts (if they exist) have no requirement on their sizes.

\begin{lemma}[Gap Lemma]\label{lem:partition_type_value}
	Let $\mathcal{P}$ be a nontrivial partition  of $V(G)$ with parts $P_1,P_2,\ldots,P_t$ such that $|P_1|\geq |P_2|\geq \cdots \geq |P_t|$.
\begin{enumerate}[label=(\alph*)]
	\item \label{lem:1_partition_type_value_2111} 
 if $\mathcal{P}$ has type $(2^+,1^+,*)$, then $w_G(\mathcal{P})\geq 9$; 
 
	\item \label{lem:2_partition_type_value_3111} 
 if $\mathcal{P}$ has type $(3^+,1^+,*)$, then $w_G(\mathcal{P})\geq 16$; 
 
	\item \label{lem:3_partition_type_value_2211} 
 if $\mathcal{P}$ has type $(2^+,2^+,*)$, then $w_G(\mathcal{P})\geq 18$; 
 
	\item \label{lem:4_partition_type_value_4111} 
 if $\mathcal{P}$ has type $(4^+,1^+,*)$, then $w_G(\mathcal{P})\geq 20$;  

    \item \label{lem:5_partition_type_value_3211} 
 if $\mathcal{P}$ has type $(3^+,2^+,*)$, then $w_G(\mathcal{P})\geq 25$;  
 
	\item \label{lem:6_partition_type_value_3311} 
 if $\mathcal{P}$ has type $(3^+,3^+,*)$, then $w_G(\mathcal{P})\geq 32$.
\end{enumerate}
\end{lemma}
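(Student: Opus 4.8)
The plan is to derive all of (a)--(f) from one inequality proved by induction. Set $h(1)=0$, $h(2)=9$, $h(3)=16$, and $h(n)=20$ for $n\ge 4$, and let $g(\mathcal{P}):=\sum_{i=1}^{t}h(|P_i|)$. I would prove that $w_G(\mathcal{P})\ge g(\mathcal{P})$ for every nontrivial partition $\mathcal{P}$ of $V(G)$. Since $h$ is nondecreasing, this yields (a)--(f) simultaneously: type $(2^+,1^+,*)$ gives $g\ge h(2)=9$; type $(3^+,1^+,*)$ gives $g\ge 16$; type $(2^+,2^+,*)$ gives $g\ge 2h(2)=18$; type $(4^+,1^+,*)$ gives $g\ge 20$; type $(3^+,2^+,*)$ gives $g\ge h(3)+h(2)=25$; and type $(3^+,3^+,*)$ gives $g\ge 32$.

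I would first record two preliminaries. First, $\mu(G)\le 7$: if some two vertices were joined by $8$ parallel edges, the corresponding $8K_2$ is a proper subgraph of $G$ (as $v(G)\ge 5$) lying in $\mathcal{W}^*$, hence $\mathcal{N}$-good by \Cref{ob:W*_N_good}, contradicting that $G$ has no $\mathcal{N}$-good proper subgraph. Second, a merge/split accounting rule: if $\mathcal{Q}$ is a partition agreeing with $\mathcal{P}$ outside a single part $U\in\mathcal{P}$ while refining $U$ into $\mathcal{Q}|_U$, then
\[
  w_G(\mathcal{P}) \;=\; w_G(\mathcal{Q}) \;+\; 19 \;-\; w_{G[U]}\!\left(\mathcal{Q}|_U\right),
\]
which is a direct computation from \Cref{equ:weight_function} tracking how the part count and the boundary sum change. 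In particular, if $\widehat{\mathcal{P}}$ is obtained from $\mathcal{P}$ by replacing the largest part $P_1$ with $|P_1|$ singletons, then $w_G(\mathcal{P})=w_G(\widehat{\mathcal{P}})+23(|P_1|-1)-2e(G[P_1])$, and $w_G(\widehat{\mathcal{P}})\ge g(\widehat{\mathcal{P}})=\sum_{i\ge 2}h(|P_i|)$ by induction (or $w_G(\widehat{\mathcal{P}})=w_G(\mathcal{P}_0)\ge w(G)\ge 0$ when $\widehat{\mathcal{P}}$ is the all-singleton partition $\mathcal{P}_0$).

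The induction is on $v(G)-|\mathcal{P}|$. Base case $|\mathcal{P}|=v(G)-1$: one doubleton part $P_1=\{x,y\}$, and $w_G(\mathcal{P})=w_G(\mathcal{P}_0)+23-2\mu(xy)\ge 0+23-14=9=g(\mathcal{P})$. Step: take $P_1$ a largest part. If $G[P_1]$ is disconnected, split it into its $r\ge 2$ components, apply induction, and re-amalgamate; since there are no edges between components the weight rises by $23(r-1)\ge 23> h(|P_1|)$, which suffices. So assume $G[P_1]$ is a \emph{connected} proper subgraph of $G$, hence not $\mathcal{N}$-good. \emph{Direct case:} if $e(G[P_1])\le\tfrac12\bigl(23(|P_1|-1)-h(|P_1|)\bigr)$, the identity with $\widehat{\mathcal{P}}$ gives $w_G(\mathcal{P})\ge\sum_{i\ge2}h(|P_i|)+23(|P_1|-1)-2e(G[P_1])\ge g(\mathcal{P})$. \emph{Contraction case:} otherwise $G[P_1]$ not being $\mathcal{N}$-good is witnessed either by $w(G[P_1])<0$ (realized by some nontrivial partition $\mathcal{Q}$ of $P_1$), or by a nontrivial $\mathcal{Q}$ with $(G[P_1])/\mathcal{Q}\in\mathcal{N}\cup\mathcal{W}^*$; in the latter case $w_{G[P_1]}(\mathcal{Q})$ is small ($\le 12$ if $(G[P_1])/\mathcal{Q}=\alpha K_2$ with $\alpha\le 8$, $\le 5$ if it is a $T_{a,b,c}$ with $a+b+c\le 16$), using \Cref{ob:weight_small_graphs}. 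Replace $P_1$ in $\mathcal{P}$ by the parts of $\mathcal{Q}$, apply induction (the parameter drops since $|\mathcal{Q}|\ge 2$), and re-amalgamate: $w_G(\mathcal{P})\ge\sum_{Q\in\mathcal{Q}}h(|Q|)+\sum_{i\ge2}h(|P_i|)+19-w_{G[P_1]}(\mathcal{Q})$, and a finite check over the possible shapes of $\mathcal{Q}$ against the values of $h$ confirms this is $\ge g(\mathcal{P})$.

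The part I expect to be delicate, and which is really the crux, is making the direct case work for small $P_1$ and making the contraction-case arithmetic close exactly. For $|P_1|=2$ the direct case is just $\mu(G)\le 7$. For $|P_1|=3$ it needs: every $3$-vertex connected proper subgraph $H$ of $G$ has $e(H)\le 15$ — indeed if $H=T_{a,b,c}$ with $a+b+c\ge 16$ then, $H$ not being $\mathcal{N}$-good, some pairwise sum is $\le 8$, so $e(H)\le 8+7=15$ by $\mu(G)\le7$; a multipath has $e(H)\le 14$. For $|P_1|=4$ it needs: a $4$-vertex connected proper subgraph with $w(H)<0$ has $e(H)\le 24$, immediate from $w(H)=2e(H)-50$ in \Cref{ob:weight_small_graphs}; if $w(H)\ge 0$ one is in the contraction case. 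For $|P_1|\ge 5$ the direct case reduces to $w_{G[P_1]}(\mathcal{P}_0)<0$, and otherwise the contraction case applies. So the main obstacle is the bookkeeping: matching the targets $9,16,20$ to the available edge bounds and verifying the contraction-case inequality for every admissible $\mathcal{Q}$ — notably a $(2,1,1)$-split of a $4$-element part, which only closes because $(G[P_1])/\mathcal{Q}\in\mathcal{N}\cup\mathcal{W}^*$ then forces $w_{G[P_1]}(\mathcal{Q})\le 5$ rather than merely $\le 12$.
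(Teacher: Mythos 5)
Your proposal is correct, and it reaches the lemma by a genuinely different organization than the paper, even though both arguments pivot on the same two ingredients: the refinement identity (the paper's \Cref{equ:refinement}, your merge/split rule) and the standing fact that the minimal counterexample $G$ has no $\mathcal{N}$-good proper subgraph (which also justifies your $\mu(G)\le 7$ via $8K_2$ and your $3$-vertex edge bound $e\le 15$). The paper proves each of (a)--(f) by contradiction: assuming $w_G(\mathcal{P})$ is below the stated bound, it verifies via \Cref{ob:partition_bound_notin} that $H=G[P_1]$ would be $\mathcal{N}$-good, using a ``choose $\mathcal{P}$ with $|\mathcal{P}|$ maximized'' device for (a) and (c) and then bootstrapping the earlier parts into (b), (d), (e), (f). You instead prove the single superadditive inequality $w_G(\mathcal{P})\ge\sum_i h(|P_i|)$ by induction on coarseness, extracting from the \emph{failure} of $\mathcal{N}$-goodness of $G[P_1]$ either an edge bound (direct case) or a refinement witness of small weight (contraction case, with the witness weight equal to the trivial-partition weight of the quotient, $2\alpha-4\le 12$ or $2(a+b+c)-27\le 5$, matching \Cref{ob:weight_small_graphs}); I checked that your ``finite check'' closes in every shape, with exactly the tight spots you flag (the $(2,1)$-split with an $8K_2$ quotient giving equality at $16$, and the $(2,1,1)$-split of a $4$-set needing the bound $5$ rather than $12$), and that the disconnected-part and $|P_1|\in\{2,3,4\}$ reductions are sound. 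What the paper's route buys is brevity and locality: each part needs only the previously proved parts plus \Cref{ob:partition_bound_notin}, with no global induction and no explicit case analysis of witnesses. What your route buys is a strictly stronger, uniform conclusion (e.g., weight at least $9$ added per nonsingleton part, so type $(4^+,4^+,*)$ gets $40$, not just $32$), the elimination of the maximality trick, and a transparent explanation of where the constants $9,16,20$ come from; the price is the up-front bookkeeping (connectivity of $G[P_1]$, $\mu\le 7$, the small-order edge bounds) that the paper sidesteps. In short: correct, complete in outline, and a legitimately different decomposition of the same underlying mechanism.
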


\begin{proof}
	Let $H:=G[P_1]$ and let $\mathcal{Q}$ be a partition of $V(H)$ with parts $Q_1,Q_2,\ldots,Q_s$. Since $\mathcal{Q}\cup\mathcal{P}\backslash P_1$ is a refinement of $\mathcal{P}$, we have 
    \begin{align*}
    w_G(\mathcal{Q}\cup\mathcal{P}\backslash P_1) &= \sum\limits_{k=1}^{s}d_G(Q_k)+\sum\limits_{i=2}^{t}d_G(P_i)-23(t-1+s)+42\\
	&=\Big(\sum\limits_{k=1}^{s}d_H(Q_k)+d_G(P_1)\Big)+\sum\limits_{i=2}^{t}d_G(P_i)-23(t-1+s)+42\\
	&=\Big(\sum\limits_{k=1}^{s}d_H(Q_k)-23s+42\Big)+\Big(\sum\limits_{i=1}^{t}d_G(P_i)-23t+42\Big)+23-42 \\
	&=w_H(\mathcal{Q})+w_G(\mathcal{P})-19.
    \end{align*}
    Thus,     \begin{equation}\label{equ:refinement}
        w_{H}(\mathcal{Q})=w_G(\mathcal{Q}\cup\mathcal{P}\setminus P_1)-w_G(\mathcal{P})+19.
    \end{equation}
	
    Each of (a)--(f) is proved in the same way. We assume the statement is false and use Equation~\eqref{equ:refinement} to 
    show that $H$ is an $\mathcal{N}$-good subgraph of $G$. Thus, \Cref{thm:main_statement_SZ9_k8}(1) holds, so $G$ is not a counterexample after all, a contradiction.
    We use earlier parts of the lemma to prove later parts, which is why we phrase many of its parts in such generality even
    though many of these (more general) cases are subsumed by later parts.
   
    For~\ref{lem:1_partition_type_value_2111}, suppose $\mathcal{P}$ is a partition with type $(2^+,1^+,\ast)$ such that $w_{G}(\mathcal{P})\leq 8$ and $|\mathcal{P}|$ is maximized. 
    For the trivial partition $\mathcal{Q}_0$ of $V(H)$, by~\Cref{equ:refinement}, 
    $w_{H}(\mathcal{Q}_0)=w_G(\mathcal{Q}_0\cup\mathcal{P}\backslash P_1)-w_G(\mathcal{P})+19
    \geq w(G)-w_{G}(\mathcal{P})+19\geq 0-8+19=11$. 
    Thus \Cref{ob:partition_bound_notin}\ref{ob:3_12_N} implies $H/\mathcal{Q}_0\notin\mathcal{N}$. 
    Every nontrivial partition $\mathcal{Q}$ has type $(2^+,1^+,\ast)$ and $\mathcal{Q}\cup\mathcal{P}\backslash P_1$ (as a partition of $V(G)$) also has type 
    $(2^+,1^+,\ast)$ with $|\mathcal{Q}\cup\mathcal{P}\backslash P_1|>|\mathcal{P}|$. 
    By the maximality of $|\mathcal{P}|$, we have $w_{G}(\mathcal{Q}\cup\mathcal{P}\backslash P_1)\geq 9$. 
    Thus, $w_{H}(\mathcal{Q})=w_G(\mathcal{Q}\cup\mathcal{P}\setminus P_1)-w_G(\mathcal{P})+19\geq 9-8+19=20$ so 
    \Cref{ob:partition_bound_notin}\ref{ob:1_14_WN} gives $H/\mathcal{Q}\notin\mathcal{N}\cup\mathcal{W}^*$. 
    Combining the cases above, we get $w(H)\geq 11$. Since $H/\mathcal{Q}_0\notin\mathcal{N}$ and
    $H/\mathcal{Q}\notin\mathcal{N}\cup\mathcal{W}^*$ for all other partitions $\mathcal{Q}$, 
    by definition $H$ is an $\mathcal{N}$-good subgraph of $G$.
	
	For~\ref{lem:2_partition_type_value_3111}, suppose $\mathcal{P}$ is a partition with type $(3^+,1^+,*)$ 
    such that $w_{G}(\mathcal{P})\leq 15$. We consider a partition $\mathcal{Q}$ of $V(H)$. 
    If $\mathcal{Q}$ is trivial, then \Cref{equ:refinement} gives $w_{H}(\mathcal{Q})\geq 0-15+19=4$. 
    As the trivial partition has at least $3$ parts, \Cref{ob:partition_bound_notin}\ref{ob:4_5_N} implies 
    $H/\mathcal{Q}\notin \mathcal{N}$.
    Otherwise $\mathcal{Q}$ has type $(2^+,1^+,*)$, so $\mathcal{Q}\cup\mathcal{P}\backslash P_1$ is a partition of $V(G)$ 
    with type $(2^+,1^+,*)$. Now \ref{lem:1_partition_type_value_2111} gives 
    $w_{H}(\mathcal{Q})=w_G(\mathcal{Q}\cup\mathcal{P}\setminus P_1)- w_G(\mathcal{P})+19\geq 9-15+19=13$. 
    So \Cref{ob:partition_bound_notin}\ref{ob:1_14_WN}
    gives $H/\mathcal{Q}\notin \mathcal{N}\cup\mathcal{W}^*$.
    Thus, by definition $H$ is $\mathcal{N}$-good. 
	
	For~\ref{lem:3_partition_type_value_2211}, suppose $\mathcal{P}$ is a partition with type $(2^+,2^+,\ast)$ such that 
    $w_{G}(\mathcal{P})\leq 17$ and $|\mathcal{P}|$ is maximized. For the trivial partition $\mathcal{Q}_0$ of $V(H)$, 
    the partition $\mathcal{Q}_0\cup\mathcal{P}\backslash P_1$ of $V(G)$ has type $(2^+, 1^+, \ast)$, so 
    \ref{lem:1_partition_type_value_2111} gives
    $w_{H}(\mathcal{Q}_0)=w_G(\mathcal{Q}_0\cup\mathcal{P}\backslash P_1)-w_G(\mathcal{P})+19\geq 9-17+19=11$.
    Now \Cref{ob:partition_bound_notin}\ref{ob:3_12_N} gives $H/\mathcal{Q}_0\notin\mathcal{N}$. 
    Every nontrivial partition $\mathcal{Q}$ has type $(2^+,1^+,\ast)$ and $\mathcal{Q}\cup\mathcal{P}\backslash P_1$ has 
    type$(2^+,2^+,\ast)$, as a partition of $V(G)$, with $|\mathcal{Q}\cup\mathcal{P}\backslash P_1|>|\mathcal{P}|$. 
    By the maximality of $|\mathcal{P}|$, we have $w_G(\mathcal{Q}\cup\mathcal{P}\backslash P_1)\geq 18$, so 
    $w_{H}(\mathcal{Q})=w_G(\mathcal{Q}\cup\mathcal{P}\backslash P_1)-w_G(\mathcal{P})+19\geq 18-17+19=20$, and 
    \Cref{ob:partition_bound_notin}\ref{ob:1_14_WN} gives $H/\mathcal{Q}\notin\mathcal{N}\cup\mathcal{W}^*$. 
    By definition, $H$ is $\mathcal{N}$-good.
	
    For~\ref{lem:4_partition_type_value_4111}, suppose $\mathcal{P}$ is a partition with type $(4^+,1^+,*)$ 
    such that $w_G(\mathcal{P})\leq 19$. 
    We consider a partition $\mathcal{Q}$ of $V(H)$. If $\mathcal{Q}$ is trivial, then $w_{H}(\mathcal{Q})\geq 0-19+19=0$ and $H/\mathcal{Q}\notin\mathcal{N}$ as $|\mathcal{Q}|\geq 4$. If $\mathcal{Q}$ has type $(2^+,1^+,*)$ and $|\mathcal{Q}|\geq 3$, then $\mathcal{Q}\cup\mathcal{P}\backslash P_1$ (as a partition of $V(G)$) has type $(2^+,1^+,*)$, so~\ref{lem:1_partition_type_value_2111} gives $w_{H}(\mathcal{Q})\geq 9-19+19=9$. 
    Now~\Cref{ob:partition_bound_notin}\ref{ob:2_7_WN} gives $H/\mathcal{Q}\notin\mathcal{N}\cup\mathcal{W}^*$. 
    Otherwise $\mathcal{Q}$ has type either $(3^+,1^+,*)$ or $(2^+,2^+,*)$, so \ref{lem:2_partition_type_value_3111} 
    and \ref{lem:3_partition_type_value_2211} give $w_{H}(\mathcal{Q})\geq \min\{16,18\}-19+19=16$. So~\Cref{ob:partition_bound_notin}\ref{ob:1_14_WN} gives $H/\mathcal{Q}\notin\mathcal{N}\cup\mathcal{W}^*$. 
    By definition, $H$ is $\mathcal{N}$-good.
	
	For~\ref{lem:5_partition_type_value_3211}, suppose $\mathcal{P}$ is a partition with type $(3^+,2^+,*)$ 
    such that $w_G(\mathcal{P})\leq 24$. For the trivial partition $\mathcal{Q}_0$ of $V(H)$, the partition   $\mathcal{Q}_0\cup\mathcal{P}\backslash P_1$ of $V(G)$ has type $(2^+, 1^+, \ast)$, so \ref{lem:1_partition_type_value_2111} 
    gives $w_{H}(\mathcal{Q}_0)\geq 9-24+19=4$.
    As $|\mathcal{Q}_0|\geq 3$, \Cref{ob:partition_bound_notin}\ref{ob:4_5_N} gives $H/\mathcal{Q}_0\notin \mathcal{N}$.
    Every nontrivial partition $\mathcal{Q}$ has type $(2^+,1^+,*)$ so the partition $\mathcal{Q}\cup\mathcal{P}\backslash P_1$ 
    of $V(G)$ has type $(2^+,2^+,*)$; thus \ref{lem:3_partition_type_value_2211} gives $w_{H}(\mathcal{Q})\geq 18-24+19=13$. 
    Now~\Cref{ob:partition_bound_notin}\ref{ob:1_14_WN} gives $H/\mathcal{Q}\notin\mathcal{N}\cup\mathcal{W}^*$. 
    By definition, again $H$ is $\mathcal{N}$-good. 
	
    For~\ref{lem:6_partition_type_value_3311}, assume $\mathcal{P}$ is a partition with type $(3^+,3^+,*)$ 
    such that $w_G(\mathcal{P})\leq 31$.  For the trivial partition $\mathcal{Q}_0$, the partition  
    $\mathcal{Q}_0\cup\mathcal{P}\backslash P_1$ of $V(G)$ has type $(3^+, 1^+, \ast)$, so \ref{lem:2_partition_type_value_3111} 
    gives $w_{H}(\mathcal{Q}_0)\geq 16-31+19=4$. Since $|\mathcal{Q}_0|\geq 3$, \Cref{ob:partition_bound_notin}\ref{ob:4_5_N} gives $H/\mathcal{Q}_0\notin\mathcal{N}$. For every partition $\mathcal{Q}$ of type $(2^+,1^+,*)$, the partition $\mathcal{Q}\cup\mathcal{P}\backslash P_1$ of $V(G)$ has type $(3^+,2^+,*)$, so \ref{lem:5_partition_type_value_3211} gives $w_{H}(\mathcal{Q})\geq 25-31+19=13$.
    Now \Cref{ob:partition_bound_notin}\ref{ob:1_14_WN} gives $H/\mathcal{Q}\notin\mathcal{N}\cup\mathcal{W}^*$. 
    By definition, $H$ is $\mathcal{N}$-good.
\end{proof}

Since $v(G)\geq 5$, \Cref{lem:partition_type_value} gives the following lower bound on the edge-connectivity of $G$.

\begin{lemma}\label{lem:edge_connectivity}
	The graph $G$ is $12$-edge-connected. If $[X,X^c]$ is an edge-cut of $G$, then  
	\begin{enumerate}[label=(\arabic*)]
		\item\label{lem:edge_cut_14} when $|X|\geq3$ and $|X^c|\geq2$, we have $|[X,X^c]|\geq15$;
		\item\label{lem:edge_cut_18} when $|X|\geq3$ and $|X^c|\geq3$, we have $|[X,X^c]|\geq18$.
	\end{enumerate}
\end{lemma}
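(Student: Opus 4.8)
The plan is to read off all three bounds from the Gap Lemma (\Cref{lem:partition_type_value}) applied to the single two-part partition $\mathcal{P}=\{X,X^c\}$ determined by the cut. The only extra ingredient is the elementary identity that for such a partition, since $d(X)=d(X^c)=|[X,X^c]|$ and $t=2$, we have $w_G(\mathcal{P})=2|[X,X^c]|-23\cdot 2+42=2|[X,X^c]|-4$; hence every lower bound on $w_G(\{X,X^c\})$ converts directly into a lower bound on $|[X,X^c]|$.

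First I would relabel so that $|X|\geq|X^c|$. Since $v(G)\geq 5$, this forces $|X|\geq 3$, so $\{X,X^c\}$ is a nontrivial partition and the Gap Lemma applies. Then I split on $|X^c|$. If $|X^c|\geq 3$, the partition has type $(3^+,3^+,\ast)$, so \Cref{lem:partition_type_value}\ref{lem:6_partition_type_value_3311} gives $w_G(\mathcal{P})\geq 32$, hence $|[X,X^c]|\geq 18$; this proves (2), and a fortiori (1) in this subcase and the bound $\geq 12$. If $|X^c|=2$, then $|X|\geq 3$ and the partition has type $(3^+,2^+,\ast)$, so \Cref{lem:partition_type_value}\ref{lem:5_partition_type_value_3211} gives $w_G(\mathcal{P})\geq 25$, hence $2|[X,X^c]|-4\geq 25$ and $|[X,X^c]|\geq 15$; this is the remaining subcase of (1), and again gives $\geq 12$. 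Finally, if $|X^c|=1$, then $|X|=v(G)-1\geq 4$, the partition has type $(4^+,1^+,\ast)$, so \Cref{lem:partition_type_value}\ref{lem:4_partition_type_value_4111} gives $w_G(\mathcal{P})\geq 20$, hence $|[X,X^c]|\geq 12$. These subcases exhaust all edge-cuts, so $G$ is $12$-edge-connected; (1) is the union of the subcases $|X^c|=2$ and $|X^c|\geq 3$, and (2) is exactly the subcase $|X^c|\geq 3$.

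I expect no real obstacle here, since the argument is a two-line computation per case. The only thing requiring care is the bookkeeping of partition types---after relabelling, one must read the type off the \emph{sorted} part-sizes, so that, for instance, a cut with $|X|=4$ and $|X^c|=1$ is handled via clause~\ref{lem:4_partition_type_value_4111} (type $(4^+,1^+,\ast)$) rather than a weaker clause---together with a quick check of the boundary case $v(G)=5$, where a cut with $|X^c|=1$ has $|X|=4$ exactly, so clause~\ref{lem:4_partition_type_value_4111} is precisely what is available and $2\cdot 12-4=20$ shows the bound $\geq 12$ is the best this argument yields. It is worth noting explicitly that the Gap Lemma is a statement about our fixed minimal counterexample $G$ (its proof uses $w(G)\geq 0$ and the absence of an $\mathcal{N}$-good proper subgraph), so invoking it here is legitimate precisely because we are still reasoning inside that counterexample.
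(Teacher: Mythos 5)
Your proposal is correct and follows essentially the same route as the paper: apply the Gap Lemma to the two-part partition $\{X,X^c\}$, use $w_G(\{X,X^c\})=2|[X,X^c]|-4$, and read off the bounds from clauses \ref{lem:4_partition_type_value_4111}, \ref{lem:5_partition_type_value_3211}, and \ref{lem:6_partition_type_value_3311} (the paper handles case (1) directly via type $(3^+,2^+,\ast)$ rather than splitting on $|X^c|=2$ versus $|X^c|\geq 3$, but this is only a cosmetic difference). The only step to state explicitly is the integrality rounding $|[X,X^c]|\geq\lceil 29/2\rceil=15$ in case (1), which you implicitly use.
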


\begin{proof}
	Note that $\mathcal{P}=\{X,X^c\}$ is a partition of $V(G)$. Since $v(G)\geq 5$, we know $\mathcal{P}$ has type either $(4^+,1^+)$ or $(3^+,2^+)$. By~\Cref{lem:partition_type_value}\ref{lem:4_partition_type_value_4111} and \ref{lem:5_partition_type_value_3211},
    we have $w_{G}(\mathcal{P})=2|[X,X^c]|-23\times 2+42\geq \min\{20,25\},$
	which implies $|[X,X^c]|\geq 12$; thus $G$ is $12$-edge-connected. 
	
	For~\ref{lem:edge_cut_14}, by~\Cref{lem:partition_type_value}\ref{lem:5_partition_type_value_3211}, we have $2|[X,X^c]|-23\times 2+42\geq 25$. Thus $|[X,X^c]|\geq \lceil 29/2\rceil = 15$. For~\ref{lem:edge_cut_18}, by~\Cref{lem:partition_type_value}\ref{lem:6_partition_type_value_3311}, we have $2|[X,X^c]|-23\times 2+42\geq 32$. Thus $|[X,X^c]|\geq18$.
\end{proof}

Next, we give the first configuration forbidden in the minimal counterexample $G$. 

\begin{lemma}\label{cla:T117}
	The graph $G$ contains no $T_{1,1,7}$.
\end{lemma}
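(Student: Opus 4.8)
My plan is to derive a contradiction from the assumption that $G$ contains a copy of $T_{1,1,7}$, on vertices $x,y,z$ with $\mu_G(xz)\ge 7$ and $\mu_G(xy),\mu_G(yz)\ge 1$; recall from the setup that $v(G)\ge 5$ and that $G$ has no $\mathcal N$-good proper subgraph. First I would dispose of the case where some pair among $x,y,z$ is joined by at least $8$ parallel edges: then $G$ contains $8K_2\in\mathcal W^*$ as a proper subgraph, which is $\mathcal N$-good by~\Cref{ob:W*_N_good}, a contradiction. So all three multiplicities are at most $7$, and in particular $\mu_G(xz)=7$. Next I would lift one copy of $xy$ together with one copy of $yz$ at $y$, producing a graph $G'$ with $\mu_{G'}(xz)=8$; this lift preserves planarity, since after deleting $xy$ and $yz$ the seven parallel $xz$-edges still place $x$ and $z$ on a common face, where the new edge $xz$ can be drawn. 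Setting $H:=8K_2\subseteq G'$ (the class of $xz$-edges), which is $\mathcal N$-good by~\Cref{ob:W*_N_good}, the goal becomes to show that $G'/H$ is $\mathcal S$-good, so that~\Cref{thm:main_statement_SZ9_k8}\ref{thm:2_main_statement_SZ9_k8} holds for $G$ --- the desired contradiction.

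To show $G'/H$ is $\mathcal S$-good, I would first note that it is connected with $v(G'/H)=v(G)-1\ge 4\ge 2$. Then, fixing a partition $\mathcal P'$ of $V(G'/H)$ and letting $\mathcal P=\{P_1,\dots,P_t\}$ be the corresponding partition of $V(G')=V(G)$ with $x,z\in P_1$, one has $(G'/H)/\mathcal P'=G'/\mathcal P$, $w_{G'/H}(\mathcal P')=w_{G'}(\mathcal P)$, and $|P_1|\ge 2$. Since the lift alters only edges incident to $y$ and to $\{x,z\}$, we get $w_{G'}(\mathcal P)=w_G(\mathcal P)$ when $y\in P_1$, and $w_{G'}(\mathcal P)=w_G(\mathcal P)-4$ when $y\notin P_1$ (two edges between $P_1$ and the part containing $y$ disappear, while one edge inside $P_1$ appears); in either case $w_{G'}(\mathcal P)\ge w_G(\mathcal P)-4$.

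The remainder is a bookkeeping argument driven by the Gap Lemma,~\Cref{lem:partition_type_value}. Because $\mathcal P$ has type $(2^+,1^+,*)$, part~\ref{lem:1_partition_type_value_2111} gives $w_G(\mathcal P)\ge 9$, so $w_{G'}(\mathcal P)\ge 5>0$, whence $w(G'/H)\ge 0$. For the non-membership condition: if $|\mathcal P|\ge 4$, then $G'/\mathcal P$ has at least four vertices and so lies outside $\mathcal N\cup\mathcal W^*$. If $|\mathcal P|=3$, then (using $v(G)\ge 5$ and $|P_1|\ge 2$) $\mathcal P$ has type $(3^+,1^+,*)$ or $(2^+,2^+,*)$, so $w_G(\mathcal P)\ge 16$ by parts~\ref{lem:2_partition_type_value_3111} and~\ref{lem:3_partition_type_value_2211}; hence $w_{G'}(\mathcal P)\ge 12$, and~\Cref{ob:partition_bound_notin}\ref{ob:2_7_WN} gives $G'/\mathcal P\notin\mathcal N\cup\mathcal W^*$. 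If $|\mathcal P|=2$, then $\mathcal P$ has type $(4^+,1^+,*)$ or $(3^+,2^+,*)$, so $w_G(\mathcal P)\ge 20$ by parts~\ref{lem:4_partition_type_value_4111} and~\ref{lem:5_partition_type_value_3211}; hence $w_{G'}(\mathcal P)\ge 16$, and~\Cref{ob:partition_bound_notin}\ref{ob:1_14_WN} gives $G'/\mathcal P\notin\mathcal N\cup\mathcal W^*$. Thus $G'/H$ is $\mathcal S$-good, and the contradiction is complete.

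Since this is essentially a weight computation governed by the Gap Lemma, I do not anticipate a deep obstacle. The two points that will need care are: confirming that the lift $(xy,yz)$ at $y$ really is planarity-preserving --- here the hypothesis $\mu_G(xz)\ge 7$ is exactly what supplies a face incident to both $x$ and $z$ on which to reinsert the new edge --- and classifying the partition types sharply enough that the Gap-Lemma bounds beat the thresholds in~\Cref{ob:partition_bound_notin}; the tight case is a two-part partition separating $y$ from $\{x,z\}$, where one only just reaches $w_{G'}(\mathcal P)\ge 16$.
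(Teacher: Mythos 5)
Your proposal is correct and follows essentially the same route as the paper: lift the edge-pair at the apex vertex to create an $8K_2$, contract it (equivalently, take $H=8K_2$ and pass to $G'/H$), and use the Gap Lemma to verify that the contracted graph is $\mathcal{S}$-good, contradicting \Cref{thm:main_statement_SZ9_k8}\ref{thm:2_main_statement_SZ9_k8}. The only cosmetic difference is that you dispose of nontrivial partitions by a case analysis on the number of parts (via \Cref{ob:partition_bound_notin}\ref{ob:1_14_WN} and \ref{ob:2_7_WN}), whereas the paper gets the same conclusion in one stroke from the $10$-edge-connectivity of the contracted graph together with \Cref{ob:partition_bound_notin}\ref{ob:5_10-edge-connected}; both bookkeepings are valid.
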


\begin{proof}
	Suppose to the contrary that $G$ contains $T_{1,1,7}$ as a subgraph as shown in~\Cref{fig:T117}. We lift an edge-pair ($xv, vy$) 
    at $v$, contract the resulting $8K_2$, denote by $w_{xy}$ the vertex formed from the contraction, and denote the resulting graph 
    by $G'$. Note that $v(G')=v(G)-1\geq 4$ and $e(G')=e(G)-9$. As $8K_2$ is an $\mathcal{N}$-good graph, to 
    contradict \Cref{thm:main_statement_SZ9_k8}\ref{thm:2_main_statement_SZ9_k8}, it suffices to prove that $G'$ is $\mathcal{S}$-good. 
    
    For the trivial partition $\mathcal{P}'_0$ of $V(G')$ and the trivial partition $\mathcal{P}_0$ of $V(G)$, we have $w_{G'}(\mathcal{P}'_0)= w_{G}(\mathcal{P}_0)-2\times 9+ 23\times 1\geq w(G)-18+23=5$. Moreover, $|\mathcal{P}'_0|=v(G')\geq 4$, which gives $G'/\mathcal{P}'_0\notin \mathcal{N}\cup\mathcal{W}^*$. 
    Let $\mathcal{P}'$ be a nontrivial partition of $V(G')$ and $\mathcal{P}$ be its corresponding partition of $V(G)$.
    For every partition $\mathcal{P}'$ with type $(2^+,1^+,\ast)$, the corresponding partition $\mathcal{P}$
    has type either $(3^+,1^+,\ast)$ or $(2^+,2^+,\ast)$; thus 
    \Cref{lem:partition_type_value}\ref{lem:2_partition_type_value_3111} and \ref{lem:3_partition_type_value_2211} 
    give $w_G(\mathcal{P})\geq \min\{16, 18\}=16$.
    Now $w_{G'}(\mathcal{P}')= w_{G}(\mathcal{P})-2\times 2\geq 12$. 
    Furthermore, $G'$ is $10$-edge-connected by~\Cref{lem:edge_connectivity}. 
    So \Cref{ob:partition_bound_notin}\ref{ob:5_10-edge-connected} gives $G'/\mathcal{P}'\notin\mathcal{N}\cup\mathcal{W}^*$. 
    Noting that $w(G')\geq 5$, we conclude that $G'$ is $\mathcal{S}$-good, as desired.
\end{proof}

The next lemma helps us handle vertices in $G$ of small degree. 

\begin{lemma}\label{lem:14_lift} 
    Let $G$ contain a vertex $v$ with $d_{G}(v)\leq 14$. Let $G'$ be a graph formed from $G$ by lifting $\alpha$ 
    edge-pairs at $v$ such that $d_{G}(v)-\alpha\leq 11$ and $d_{G}(v)-2\alpha\geq 8$, and let $G''=G'-v$. If $G''$ contains 
    no $9K_2$, then $G''$ is $\mathcal{S}$-good and $G'/\mathcal{P}\notin\mathcal{N}$ for every partition $\mathcal{P}$ of $V(G')$.
\end{lemma}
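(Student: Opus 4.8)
Write $d:=d_G(v)$, so $8\le d\le 14$; the hypotheses force $1\le\alpha\le\lfloor(d-8)/2\rfloor\le 3$ and $d-\alpha\le 11$, and $d_{G'}(v)=d-2\alpha\ge 8$. A single lifting at $v$ changes no vertex degree other than $d(v)$, so $d_{G'}(x)=d_G(x)\ge 12$ for every $x\ne v$, by \Cref{lem:edge_connectivity}. Also $\mu(G)\le 7$: otherwise some $\alpha K_2$ with $\alpha\ge 8$ would be a proper subgraph of $G$, and $8K_2\in\mathcal W^{*}$ is $\mathcal N$-good by \Cref{ob:W*_N_good} while $\alpha K_2$ with $\alpha\ge 9$ is $\mathcal N$-good vacuously. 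For the same reason $G$ has no multitriangle $T_{a,b,c}$ with $a+b+c\ge 16$ (such a subgraph has $\mu\le 7$, hence $\delta\ge 9$, hence is $\mathcal N$-good). The engine of the proof is a single identity, obtained by the same bookkeeping as in the proof of the Gap Lemma: if $\mathcal Q$ is a partition of $V(G'')$ and $\mathcal Q^{+}:=\mathcal Q\cup\{\{v\}\}$ is the corresponding partition of $V(G')=V(G)$, then
\[
 w_{G''}(\mathcal Q)=w_{G'}(\mathcal Q^{+})-2d_{G'}(v)+23\ \ge\ w_G(\mathcal Q^{+})-2d+23,
\]
the inequality because $v$ is a singleton of $\mathcal Q^{+}$, so each of the $\alpha$ liftings lowers $w_G(\mathcal Q^{+})$ by at most $4$, i.e.\ $w_{G'}(\mathcal Q^{+})\ge w_G(\mathcal Q^{+})-4\alpha$, and $d_{G'}(v)=d-2\alpha$. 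For the trivial partition the analogous computation gives $w_{G''}(\mathcal Q_0)=w_G(\mathcal Q_0)-2(d-\alpha)+23\ge 1$.

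\textbf{Easy conclusions.} For nontrivial $\mathcal Q$ the partition $\mathcal Q^{+}$ has type $(2^{+},1^{+},\ast)$, so the Gap Lemma gives $w_G(\mathcal Q^{+})\ge 9$ and hence $w_{G''}(\mathcal Q)\ge 32-2d\ge 4$; together with the trivial case this shows $w(G'')\ge 1>0$. In particular $G''$ is connected and $v(G'')=v(G)-1\ge 4\ge 2$, the first two requirements for $\mathcal S$-goodness. Next, since every graph in $\mathcal N$ has at most three vertices, to show $G'/\mathcal P\notin\mathcal N$ we need only treat $|\mathcal P|\le 3$. If $|\mathcal P|=3$ then (as $v(G)\ge 5$) $\mathcal P$ has a part of size $\ge 3$ or two parts of size $\ge 2$, so $w_G(\mathcal P)\ge 16$, whence $w_{G'}(\mathcal P)\ge 16-4\alpha\ge 4$ and \Cref{ob:partition_bound_notin} gives $G'/\mathcal P\notin\mathcal N$. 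If $|\mathcal P|=2$ with a singleton part $\{y\}$, then $G'/\mathcal P=d_{G'}(y)K_2$ with $d_{G'}(y)\ge 8>7$; and if both parts have size $\ge 2$, then $w_G(\mathcal P)\ge 25$ (type $(3^{+},2^{+})$), so $w_{G'}(\mathcal P)\ge 13>10$. Thus $G'/\mathcal P\notin\mathcal N$ in every case.

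\textbf{The remaining conclusion $G''/\mathcal P\notin\mathcal N\cup\mathcal W^{*}$.} Again only $|\mathcal P|\le 3$ (hence nontrivial) matters. If $\mathcal P$ has a part of size $\ge 3$ or two parts of size $\ge 2$, then $w_G(\mathcal Q^{+})\ge 16$, so $w_{G''}(\mathcal P)\ge 16-2d+23\ge 11\ge 6$ and \Cref{ob:partition_bound_notin} applies. Two cases remain. If $\mathcal P$ has type $(2,1,1)$ (so $v(G)=5$): then $e(G'')=e(G)-(d-\alpha)\ge e(G)-11\ge 26$ (since $w(G)\ge 0$ and $v(G)=5$ give $e(G)\ge 37$), and $\mu(G'')\le 8$ because $G''$ has no $9K_2$, so $w_{G''}(\mathcal P)=2\bigl(e(G'')-\mu_{G''}(U)\bigr)-27\ge 9$ for the size-$2$ part $U$. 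If $\mathcal P$ is a $2$-partition with a singleton part $\{y\}$, the only thing to check is $d_{G''}(y)\ge 9$. Applying the Gap Lemma to $\{\{y\},\{v\},V(G)\setminus\{y,v\}\}$ (whose weight has fixed parity) together with $d\le 14$ gives $d_G(y)-\mu_G(yv)\ge 8$, and $\ge 10$ when $v(G)\ge 6$; as $d_{G''}(y)=d_G(y)-\mu_{G'}(yv)\ge d_G(y)-\mu_G(yv)$, this leaves only $v(G)=5$, $d=14$, $d_G(y)-\mu_G(yv)=8$. There one lifts a $yv$-edge for each such $y$ (possible since $\mu_G(yv)\ge 4$ and, as $d=14$, there are boundedly many such $y$), so that $d_{G''}(y)\ge 9$; and $\mu(G)\le 7$, the exclusion of large multitriangles, and $e(G)\ge 37$ then guarantee that the few new parallel edges do not create a $9K_2$ unless $G$ already had an $\mathcal N$-good proper subgraph --- a contradiction.

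\textbf{Main obstacle.} Everything is routine weight arithmetic except the guarantee that $G''$ has minimum degree at least $9$, equivalently that a ``single vertex versus the rest'' partition of $V(G'')$ never yields a graph in $\mathcal W^{*}$. The weight bounds fall short of this by exactly one in the case $v(G)=5$, $d_G(v)=14$, so there one must argue structurally --- choosing the liftings to protect the low-degree vertex, or extracting an $\mathcal N$-good proper subgraph of $G$ and contradicting minimality. I expect this short but delicate sub-case to be the real content of the proof.
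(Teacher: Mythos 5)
Your weight bookkeeping for the routine cases is essentially the paper's own argument (the bound $w_{G''}(\mathcal Q)\ge w_G(\mathcal Q\cup\{\{v\}\})-2d_G(v)+23$, the Gap Lemma, the no-$9K_2$ hypothesis for type $(2,1,1)$, and a direct check for $G'/\mathcal P\notin\mathcal N$), and those computations are correct once one splits by type — note that your blanket step ``$w_{G''}(\mathcal P)\ge 11\ge 6$ and \Cref{ob:partition_bound_notin} applies'' is only valid when $|\mathcal P|\ge 3$; for two-part partitions one needs the sharper bounds ($\ge 13$ for types $(4^+,1)$ and $(2^+,2^+)$), which do hold, leaving exactly the type $(3,1)$ two-part case, i.e.\ a vertex $y$ with $d_{G''}(y)=8$, as the residual problem. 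That residual case is where your proof has a genuine gap. The lemma is a statement about the \emph{given} lifting $G'$ (and it is later applied to specific liftings, e.g.\ in the proof that $G$ contains no $T_{3,3,5}$), so you are not free to ``lift a $yv$-edge for each such $y$'': changing the lifting changes $G'$ and $G''$ and proves nothing about the pair you were handed. Moreover, even as a sketch of an existential variant, the key assertion — that the new parallel edges created by your re-chosen lifting ``do not create a $9K_2$ unless $G$ already had an $\mathcal N$-good proper subgraph'' — is not argued at all; you acknowledge in your final paragraph that this sub-case is ``the real content,'' which is an accurate self-assessment: it is left open.

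For comparison, the paper closes this case by a structural contradiction with the minimality of $G$ rather than by re-choosing the lifting. In the exceptional situation ($v(G)=5$, $d_G(v)=14$, $G''/\{P_1,P_2\}=8K_2$ with $|P_1|=3$), the weight estimate forces $\alpha=3$ with all three lifted edges landing inside $G[P_1]$, so $v$ has at least two neighbours in $P_1$; from $w(G)\ge 0$ one gets $e(G)\ge 37$, hence $e(G[P_1])\ge 37-14-8=15$, so $G[P_1]$ is a multitriangle $T_{a,b,c}$ with $a+b+c\ge 15$. If $a+b+c\ge 16$, then (using that $G$ has no $8K_2$) $G[P_1]$ is an $\mathcal N$-good proper subgraph of $G$, contradicting \Cref{thm:main_statement_SZ9_k8}\ref{thm:1_main_statement_SZ9_k8}; if $a+b+c=15$, then \Cref{cla:T117} gives $\mu(G[P_1])\le 6$, hence $\delta(G[P_1])\ge 9$, and lifting one edge-pair at $v$ into $P_1$ produces an $\mathcal N$-good multitriangle whose contraction is an $\mathcal S$-good three-vertex graph, contradicting \Cref{thm:main_statement_SZ9_k8}\ref{thm:2_main_statement_SZ9_k8}. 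Some argument of this kind — deriving a contradiction about $G$ itself, not modifying the lifting — is what your proposal is missing.
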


\begin{proof}
    Note that $v(G'')=v(G)-1\geq4$, and $e(G'')=e(G)-(d_{G}(v)-\alpha)$. 
    We begin with the following claim, which implies that $G''$ is $\mathcal{S}$-good.

    \smallskip
    \noindent 
    \emph{Claim: $w(G'')\ge 0$ and $G''/\mathcal{P}''\notin\mathcal{N}\cup\mathcal{W}^*$ for every 
    partition $\mathcal{P}''$ of $V(G'')$.} 

    \smallskip
    The trivial partition $\mathcal{P}''_0$ of $V(G'')$ satisfies 
    $w_{G''}(\mathcal{P}_0'')= w_{G}(\mathcal{P}_0)-2\times (d_{G}(v)-\alpha)+23\times 1\geq w(G)-2\times 11+23\geq 1$, where $\mathcal{P}_0$ is the trivial partition of $V(G)$.
    Further, $|\mathcal{P}''_0|=v(G'')\ge 4$, so $G''/\mathcal{P}''\notin\mathcal{N}\cup\mathcal{W}^*$. Let $\mathcal{P}''=\{P_1, P_2, \ldots P_t\}$ be a nontrivial partition of $V(G'')$.
    If $\mathcal{P}''$ is a partition of $V(G'')$ with type $(4^+,1^+,*)$ or type $(2^+,2^+,*)$, then $\mathcal{P}''\cup\{v\}$ also has type  $(4^+,1^+,*)$ or $(2^+,2^+,*)$ as a partition of $V(G)$. Then \Cref{lem:partition_type_value}\ref{lem:3_partition_type_value_2211} and \ref{lem:4_partition_type_value_4111} imply $w_{G''}(\mathcal{P}'')
    \geq w_{G}(\mathcal{P}''\cup\{v\})-2\times d_{G}(v)+23\times 1 \geq \min\{18,20\}-28+23=13$.    So~\Cref{ob:partition_bound_notin}\ref{ob:1_14_WN} gives $G''/\mathcal{P}''\notin\mathcal{N}\cup\mathcal{W}^*$. Thus we assume $|P_1|\in \{2,3\}$ and $|P_i|=1$ for all $i\ge 2$.
    First consider such a partition $\mathcal{P}''$ with type $(2,1,*)$ and let $P_1=\{x, y\}$. Since $G''$ contains no $9K_2$, we have
    $w_{G''}(\mathcal{P}'')\geq w_{G''}(\mathcal{P}''_0)-2\times \mu_{G''}(xy)+23\times 1 \geq1-16+23=8$. 
    Since $v(G'')\ge 4$, we have $|\mathcal{P}''|\ge 3$,
    so \Cref{ob:partition_bound_notin}\ref{ob:2_7_WN} implies $G''/\mathcal{P}''\notin\mathcal{N}\cup\mathcal{W}^*$.
    
    Instead we assume $\mathcal{P}''$ has type $(3,1,*)$. For every type $(3,1,*)$ partition $\mathcal{P}''$ of $V(G'')$, we know that 
    $\mathcal{P}''\cup \{v\}$ is a type $(3,1,*)$ partition of $V(G)$, so 
    \Cref{lem:partition_type_value}\ref{lem:2_partition_type_value_3111} implies
    $w_{G''}(\mathcal{P}'')\geq w_{G}(\mathcal{P}''\cup\{v\}) - 2\times d_{G}(v) + 23\times 1 \geq 16-28+23 = 11$.
    Thus \Cref{ob:partition_bound_notin}\ref{ob:3_12_N} gives $G''/\mathcal{P}'' \notin\mathcal{N}\cup\mathcal{W}^*\backslash 
    \{8K_2\}$.  So we assume $G''/\mathcal{P}''=8K_2$ where $\mathcal{P}''=\{P_1, P_2\}$ with $|P_1|=3$ and $|P_2|=1$. 
    Moreover, this case holds only if we lifted $\alpha$ edge-pairs at $v$ such that $\alpha$ edges have been added into 
    $G[P_1]$ and $d_{G}(v)=14$; otherwise, the inequality above can be improved to $w_{G''}(\mathcal{P}'')\geq 13$, and we are done.  
    Thus $\alpha=3$ and $v$ has at least two neighbours in $P_1$. 

Note that $v(G)=5$ and $0\le w(G)\le \sum_{v\in V(G)}d(v)-23v(G)+42 = 2e(G)-115+42$, so
    $e(G)\geq \lceil 73/2\rceil = 37$, and then $e(G[P_1])=e(G)-d_{G}(v)-e_{G}(P_1,P_2)\geq 37-14-8=15$. 
    That is, $G[P_1]$ is a multitriangle $T_{a,b,c}$ with $a+b+c\geq 15$. 
    If $a+b+c\geq 16$, then $\delta(G[P_1])\geq 9$ because $G$ has no $8K_2$. 
    Thus $G[P_1]$ is an $\mathcal{N}$-good subgraph of $G$, which contradicts~\Cref{thm:main_statement_SZ9_k8}\ref{thm:1_main_statement_SZ9_k8}.
    So assume $a+b+c=15$. Note that $e_{G}(P_1,P_2\cup\{v\})\geq 15$ and $d_{G}(P_2)\geq 12$ by~\Cref{lem:edge_connectivity}. Since $G$ contains no $T_{1,1,7}$ (by~\Cref{cla:T117}), we know $\mu(G[P_1])\le 6$, 
    so $\delta(G[P_1])\geq 9$. 
    In this case, we lift an edge-pair at $v$ to add an edge into $G[P_1]$ and denote it by $G_1$. It is easy to see that $G_1$ is a multitriangle (induced by $P_1$) with $e(G_1)\geq 16$ and $\delta(G_1)\geq 9$, and thus $G_1$ is an $\mathcal{N}$-good graph. Then we contract $G_1$, and note that the graph formed by contraction is an $\mathcal{S}$-good graph on three vertices with at
    least $20$ edges and minimum degree at least $9$. Hence, it contradicts~\Cref{thm:main_statement_SZ9_k8}\ref{thm:2_main_statement_SZ9_k8}. 
    Therefore, $w_{G''}(\mathcal{P}'')\ge 0$ and $G''/\mathcal{P}''\notin\mathcal{N}\cup\mathcal{W}^*$ 
    for every partition $\mathcal{P}''$, which proves our claim.
    
    \medskip
    Next we prove that $G'/\mathcal{P}'\notin\mathcal{N}$ for every partition $\mathcal{P}'$ of $V(G')$. We consider two cases 
    based on the size of $\mathcal{P}'$. First suppose that $\mathcal{P}'$ is a partition of $V(G')$ with two parts $P_1$ and 
    $P_2$.  By symmetry, we assume $v\in P_1$.  If $P_1=\{v\}$, then $e(P_1,P_2) = d_{G'}(v) \ge 8$, so 
    $G'/\mathcal{P}'\notin \mathcal{N}$.  Thus, we assume $|P_1|\ge 2$.  Hence, $\{P_1-v,P_2\}$ is a partition of $V(G'')$. 
    Since $G''/\{P_1-v,P_2\}\notin\mathcal{N}\cup\mathcal{W}^*$, we know that $e_{G'}(P_1-v,P_2)=e_{G''}(P_1-v,P_2)\geq 9$, and 
    thus $e_{G'}(P_1,P_2)\geq 9$. Hence, $G'/\mathcal{P}'\notin\{\alpha K_2:\alpha\leq 7\}$. 
    Now instead we consider a partition $\mathcal{P}'$ of $V(G')$ with three parts $P_1$, $P_2$, and $P_3$. If $P_1=\{v\}$, then 
    $\{P_2,P_3\}$ is a partition of $V(G'')$. Since $e_{G'}(P_2,P_3)=e_{G''}(P_2,P_3)\geq 9$ and $d_{G'}(v)\geq 8$, 
    we know $e(G'/\mathcal{P}')\ge 9+8=17$, so $G'/\mathcal{P}'\notin\{T_{a,b,c}:a+b+c\leq 15\}$. 
    If $v\in P_1$ and $|P_1|\geq 2$, then we consider a partition of $V(G'')$ with parts $P_1-v, P_2, P_3$; 
    call it $\mathcal{P}''$. By the above claim, $G''/\mathcal{P}''\notin\{T_{a,b,c}:a+b+c\leq 15\}$; 
    thus $G'/\mathcal{P}'\notin\{T_{a,b,c}:a+b+c\leq 15\}$. 
    Therefore, there is no partition $\mathcal{P}'$ of $V(G')$ such that $G'/\mathcal{P}'\in\mathcal{N}$. 
\end{proof}

The next result is the first application of the above lemma. In fact, we show that $\delta(G)\ge 14$.

\begin{lemma}\label{lem:edge_connectivity_14}
	The graph $G$ is $14$-edge-connected.
\end{lemma}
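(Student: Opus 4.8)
\textbf{Proof plan for \Cref{lem:edge_connectivity_14}.}
The plan is to argue by contradiction: suppose $G$ has an edge-cut $[X,X^c]$ with $|[X,X^c]|\le 13$, and choose such a cut with $|X|$ minimized. By \Cref{lem:edge_connectivity}, $G$ is already $12$-edge-connected, and moreover the refined bounds there force $|X|$ and $|X^c|$ to be small: if both $|X|\ge 3$ and $|X^c|\ge 3$ then the cut has size at least $18$, a contradiction; so one side, say $X^c$, has $|X^c|\le 2$, and if $|X|\ge 3$ then \ref{lem:edge_cut_14} forces $|[X,X^c]|\ge 15$, again a contradiction. Hence $|X|\le 2$ as well. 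So the offending cut isolates a set of size $1$ or $2$; that is, either there is a vertex $v$ of degree $\le 13$, or there are two vertices $x,y$ with $d_G(\{x,y\})\le 13$.

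First I would dispose of the two-vertex case. If $d_G(\{x,y\})\le 13$, then since $G$ is $12$-edge-connected both $x$ and $y$ have degree $\ge 12$, so $\mu_G(xy)$ is large (at least $(12+12-13)/2$), i.e.\ $G$ contains $\alpha K_2$ on $\{x,y\}$ with $\alpha\ge 6$ (one checks the exact value); in fact the constraints push $\alpha$ up to where $G[\{x,y\}]\cup[\{x,y\},\text{rest}]$ looks like a small edge-cut that we can analyze directly, or one contracts the $\alpha K_2$ (after possibly lifting a pair at $x$ or $y$ to reach $8K_2$, using the Gap Lemma \Cref{lem:partition_type_value} to keep nonnegative weight) and invokes \Cref{thm:main_statement_SZ9_k8}\ref{thm:1_main_statement_SZ9_k8} or \ref{thm:2_main_statement_SZ9_k8} — this is essentially the strategy already used in \Cref{cla:T117}. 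The main work is the one-vertex case: we have $v$ with $d_G(v)\le 13$, and since $G$ is $12$-edge-connected, $d_G(v)\in\{12,13\}$. The idea is to apply \Cref{lem:14_lift}: choose the number $\alpha$ of edge-pairs lifted at $v$ so that $d_G(v)-\alpha\le 11$ and $d_G(v)-2\alpha\ge 8$ — for $d_G(v)=12$ take $\alpha=1$ or $2$, for $d_G(v)=13$ take $\alpha=2$ — and do the lifting along the face boundary to preserve planarity, choosing \emph{which} pairs to lift so that the resulting $G''=G'-v$ contains no $9K_2$. If we can always make that choice, then \Cref{lem:14_lift} tells us $G''$ is $\mathcal{S}$-good and $G'/\mathcal{P}\notin\mathcal{N}$ for every partition $\mathcal{P}$, which is exactly the configuration of \Cref{thm:main_statement_SZ9_k8}\ref{thm:3_main_statement_SZ9_k8}; so $G$ is not a counterexample, a contradiction.

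The hard part will be guaranteeing that we can perform the lifting so that $G''$ contains no $9K_2$. If every way of lifting $\alpha$ pairs at $v$ creates a $9K_2$ between some pair $x,y$ of neighbours of $v$, then $G$ must already contain many parallel edges between $x$ and $y$ together with several $v$--$x$ and $v$--$y$ edges, so $G$ contains a dense subgraph on $\{v,x,y\}$ — a multitriangle $T_{a,b,c}$ with large $a+b+c$, or with a forbidden $T_{1,1,7}$ (ruled out by \Cref{cla:T117}), or containing $8K_2$. In each such case I would instead produce an $\mathcal{N}$-good subgraph on $\{v,x,y\}$ (possibly after one lift to push $a+b+c$ to $16$ with $\delta\ge 9$, exactly as in the endgame of the proof of \Cref{lem:14_lift}), contradicting \Cref{thm:main_statement_SZ9_k8}\ref{thm:1_main_statement_SZ9_k8}, or contract that subgraph and use \ref{thm:2_main_statement_SZ9_k8}. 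So the proof is a short case analysis on $d_G(v)\in\{12,13\}$ and on the local multiplicity structure around $v$, with the Gap Lemma providing the weight slack and \Cref{lem:14_lift} doing the heavy lifting in the generic case; the delicate point is the bookkeeping that shows the only obstructions to a good lift are precisely the dense triangles already forbidden or reducible.
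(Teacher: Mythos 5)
Your plan is essentially the paper's proof: reduce to $\delta(G)\ge 14$ using \Cref{lem:edge_connectivity}, then at a vertex $v$ with $d_G(v)\in\{12,13\}$ lift two edge-pairs, delete $v$, and invoke \Cref{lem:14_lift} to contradict \Cref{thm:main_statement_SZ9_k8}\ref{thm:3_main_statement_SZ9_k8}. Two of your worries are, however, unnecessary detours. First, the two-vertex case is vacuous: since $v(G)\ge 5$, a cut $[X,X^c]$ with $|X|=2$ has $|X^c|\ge 3$, so \Cref{lem:edge_connectivity}\ref{lem:edge_cut_14} already gives $|[X,X^c]|\ge 15$; the contraction/lifting argument you sketch for that case (with its ``one checks the exact value'' step) is not needed, and indeed your own reduction, pushed one step further, shows only single-vertex cuts can be small. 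Second, the ``hard part'' of choosing which pairs to lift is not hard at all: a lift at $v$ can only add an edge between two vertices $x,y$ that are both neighbours of $v$, and since $G$ contains no $T_{1,1,7}$ (\Cref{cla:T117}) such a pair satisfies $\mu_G(xy)\le 6$, while any pair not jointly adjacent to $v$ keeps $\mu_{G''}(xy)=\mu_G(xy)\le 7$ because $G$ has no $8K_2$; so after lifting two pairs, $\mu_{G''}(xy)\le 8$ for every pair, i.e.\ $G''$ has no $9K_2$, regardless of which pairs you lift. This is exactly how the paper closes the argument, so your fallback case analysis on dense triangles around $v$ never arises.
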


\begin{proof}
    It suffices to prove that $\delta(G)\geq 14$. The lemma then follows from 
    the fact that $v(G)\geq 5$, as if $[X,X^c]$ is an edge-cut with $|X|\geq 2$ and $|X^c|\geq 3$, then \Cref{lem:edge_connectivity}\ref{lem:edge_cut_14} gives $|[X,X^c]|\geq 15$.
 
    Suppose to the contrary that $\delta(G)\leq 13$.  By \Cref{lem:edge_connectivity}, we have $\delta(G)\in\{12,13\}$. Let $v$ be a vertex of $G$ with $d_G(v)=\delta(G)$. We lift two edge-pairs at $v$ to form a new graph $G'$, delete the vertex $v$, and denote the resulting graph by $G''$. Note that $d_G(v)-2\in\{10,11\}$ and $d_G(v)-2\times2 \in \{8,9\}$. Since $G$ contains no $8K_2$ and $G$
contains no $T_{1,1,7}$ (by~\Cref{cla:T117}), we know that $\mu_{G''}(xy)\leq 8$ for every two vertices $x,y\in V(G'')$. Thus, by~\Cref{lem:14_lift}, $G''$ is $\mathcal{S}$-good and $G'/\mathcal{P}\notin\mathcal{N}$ for every partition $\mathcal{P}$ of $V(G')$, contradicting~\Cref{thm:main_statement_SZ9_k8}\ref{thm:3_main_statement_SZ9_k8}.
\end{proof}

After improving our bounds on the edge-connectivity of $G$, we can lift more edge-pairs in $G$. 
We make this more precise in the next lemma.

\begin{lemma}
    \label{cla:lift_edge_pairs}
	Let $G_1$ be a graph formed from $G$ by lifting $X$ edge-pairs, $Y$ edge-triples, and $Z$ edge-quadruples
    with $X+Y+Z\leq2$.  If also $X+2Y+3Z\leq 3$, then $G_1$ contains no $8K_2$.
\end{lemma}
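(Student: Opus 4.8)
The plan is to argue by contradiction: assume $G_1$ contains an $8K_2$, say on the pair $\{x,y\}$, and write $m:=\mu_G(xy)$; since $G$ has no $8K_2$, $m\le 7$. Each of the $X+Y+Z\le 2$ liftings adds exactly one edge, and this edge joins $x$ and $y$ precisely when the lifted path runs from $x$ to $y$; call such a lifting \emph{good}, and let $g\le 2$ be their number. A good lifting deletes only interior edges of an $(x,y)$-path, each incident with a vertex other than $x,y$, so it destroys no copy of $xy$, while a non-good lifting destroys at most one copy of $xy$. Hence $\mu_{G_1}(xy)\le m+g\le 9$.

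First I would dispose of the case $\mu_{G_1}(xy)=9$: then $m=7$ and $g=2$, so both liftings are good $(x,y)$-paths; since two liftings that were each an edge-triple or edge-quadruple would contribute at least $4$ to $X+2Y+3Z$, at least one of them is an edge-pair $(xu,uy)$, which exhibits a common neighbour $u$ of $x$ and $y$ with $\mu_G(xy)=7$. Thus $G\supseteq T_{1,1,7}$, contradicting \Cref{cla:T117}.

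So $\mu_{G_1}(xy)=8$; let $H$ be this $8K_2$, which is $\mathcal{N}$-good, so to contradict \Cref{thm:main_statement_SZ9_k8}\ref{thm:2_main_statement_SZ9_k8} it suffices to prove $G_1/H$ is $\mathcal{S}$-good. The liftings delete $2X+3Y+4Z=(X+Y+Z)+(X+2Y+3Z)\le 5$ edges and add $X+Y+Z$ edges, so $e(G_1)=e(G)-(X+2Y+3Z)\ge e(G)-3$ and $e(G_1/H)=e(G_1)-8\ge e(G)-11$; with $v(G_1/H)=v(G)-1\ge 4$ this gives $w_{G_1/H}(\mathcal{P}_0)\ge w(G)+1\ge 1$ and $G_1/H\notin\mathcal{N}\cup\mathcal{W}^*$ (too many vertices), settling the trivial partition $\mathcal{P}_0$. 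Moreover, lifting an edge-pair or an edge-triple decreases every cut size by at most $2$, and lifting an edge-quadruple by at most $4$ (along the deleted path the number of edges crossing a fixed cut has the same parity as the indicator that the cut separates the path's two endpoints); since $X+2Y+3Z\le 3$ forces $Z\le 1$ with $X=Y=0$ whenever $Z=1$, every cut of $G$ shrinks by at most $4$ in passing to $G_1$, so $G_1$ — hence also $G_1/H$ — is $10$-edge-connected (using that $G$ is $14$-edge-connected).

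Finally I would treat the nontrivial partitions $\mathcal{P}'$ of $V(G_1/H)$. Un-merging the vertex of $H$, such a $\mathcal{P}'$ corresponds to a partition $\mathcal{P}$ of $V(G)$ in which $x$ and $y$ lie in a common part, and $w_{G_1/H}(\mathcal{P}')=w_{G_1}(\mathcal{P})$ since the $8K_2$ sits inside that part. Because $\mathcal{P}'$ is nontrivial, $\mathcal{P}$ has type $(3^+,1^+,*)$ or $(2^+,2^+,*)$, so \Cref{lem:partition_type_value}\ref{lem:2_partition_type_value_3111} and \ref{lem:3_partition_type_value_2211} give $w_G(\mathcal{P})\ge 16$. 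Passing from $G$ to $G_1$ changes $w_G(\mathcal{P})$ by $-2$ for each deleted edge crossing $\mathcal{P}$ and $+2$ for each added edge crossing $\mathcal{P}$, and the added copies of $xy$ do not cross $\mathcal{P}$; hence $w_{G_1}(\mathcal{P})\ge 16-2\cdot 5=6$. Now \Cref{ob:partition_bound_notin}\ref{ob:5_10-edge-connected}, applied to the $10$-edge-connected graph $G_1/H$, yields $(G_1/H)/\mathcal{P}'\notin\mathcal{N}\cup\mathcal{W}^*$. Thus $G_1/H$ is $\mathcal{S}$-good, so \Cref{thm:main_statement_SZ9_k8}\ref{thm:2_main_statement_SZ9_k8} holds for $G$, the desired contradiction. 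The step I expect to be most delicate is the edge-connectivity bound: one must see that an edge-triple behaves like an edge-pair (costing at most $2$ in edge-connectivity, not $4$) while an edge-quadruple may cost $4$, and that the two hypotheses $X+Y+Z\le 2$ and $X+2Y+3Z\le 3$ are calibrated exactly so that $G_1$ remains $10$-edge-connected, which is precisely what makes \Cref{ob:partition_bound_notin}\ref{ob:5_10-edge-connected} applicable.
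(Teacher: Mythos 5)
Your proof is correct and takes essentially the same route as the paper: rule out a $9K_2$ via the forbidden $T_{1,1,7}$, contract the resulting $8K_2$, and show the contracted graph is $\mathcal{S}$-good by combining the trivial-partition weight count with the Gap Lemma bound $w_G(\mathcal{P})\ge\min\{16,18\}-2\cdot 5=6$ and \Cref{ob:partition_bound_notin}\ref{ob:5_10-edge-connected}, contradicting \Cref{thm:main_statement_SZ9_k8}\ref{thm:2_main_statement_SZ9_k8}. The only differences are cosmetic: you spell out the $\mu_{G_1}(xy)=9$ exclusion more explicitly, and your parity argument gives $10$-edge-connectivity where the paper uses the cruder bound $14-(2X+3Y+4Z)\ge 9$, which already suffices.
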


\begin{proof}
    Recall that $G$ has no $T_{1,1,7}$ by \Cref{cla:T117}; since at most $X+Y+Z\leq 2$ new edges may be created by lifting, $G_1$ has no $9K_2$.
	Suppose the lemma is false; that is, $G_1$ contains $8K_2$. We contract $8K_2$ and denote the resulting graph by $G'$. 
    Since $X+2Y+3Z\leq 3$, after lifting and contracting, $v(G')=v(G)-1\geq4$ and $e(G')\geq e(G)-(8+X+2Y+3Z)\geq e(G)-11$. By~\Cref{lem:edge_connectivity_14}, $G_1$ has edge-connectivity at least $14-(2X+3Y+4Z)\geq 14-(2+3)=9$ and thus $G'$ is also $9$-edge-connected. 

    For the trivial partition $\mathcal{P}_0'$ of $V(G')$, we have $w_{G'}(\mathcal{P}_0')\geq w(G)-2\times 11+23\times 1 \geq 1$. 
    Because $|\mathcal{P}_0'|\geq 4$, clearly $G'/\mathcal{P}_0'\notin\mathcal{N}\cup \mathcal{W}^*$. 
    Let $\mathcal{P}'$ be a nontrivial partition of $V(G')$ and $\mathcal{P}$ be the corresponding partition of $V(G)$. 
    If $\mathcal{P}'$ has type $(2^+,1^+,\ast)$, then $\mathcal{P}$ is a partition of $V(G)$ with type 
    either $(3^+,1^+,\ast)$ or $(2^+,2^+,\ast)$. 
    Hence, 
    \Cref{lem:partition_type_value}\ref{lem:2_partition_type_value_3111} and \ref{lem:3_partition_type_value_2211} 
    give $w_G(\mathcal{P})\geq \min\{16, 18\}=16$.
    Let $m$ denote the number of edges that are counted in $\omega_G(\mathcal{P})$ but not in $\omega_{G'}(\mathcal{P}')$. 
    Note that $m\leq 2X+3Y+4Z\leq 5$. 
    Thus, $w_{G'}(\mathcal{P}')\geq w_G(\mathcal{P})-2\times m\geq 16-2m\geq 6$. 
    Because $G'$ is $9$-edge-connected,  \Cref{ob:partition_bound_notin}\ref{ob:5_10-edge-connected} gives 
    $G'/\mathcal{P}'\notin\mathcal{N}\cup\mathcal{W}^*$. 
    Since $w(G')\geq 1$, $G'$ is an $\mathcal{S}$-good graph, 
    contradicting~\Cref{thm:main_statement_SZ9_k8}\ref{thm:2_main_statement_SZ9_k8}.
\end{proof}

\begin{figure}[!htbp]
	\centering
	\begin{subfigure}[t]{.34\textwidth}
		\centering
	    \begin{tikzpicture}[scale=.55]
		    \draw [line width=0.8pt, black] (0,3) to (2,0);
		    \draw [line width=0.8pt, black] (0,3) to (-2,0);		  
  		    \draw [line width=0.8pt, black] (2,0) to (-2,0);
  		    \draw [bend left=10,line width=0.8pt, black] (2,0) to (-2,0);
		    \draw [bend right=10,line width=0.8pt, black] (2,0) to (-2,0);
		    \draw [bend left=20,line width=0.8pt, black] (2,0) to (-2,0);
		    \draw [bend right=20,line width=0.8pt, black] (2,0) to (-2,0);
		    \draw [bend left=30,line width=0.8pt, black] (2,0) to (-2,0);
		    \draw [bend right=30,line width=0.8pt, black] (2,0) to (-2,0);
  
	    	\draw [fill=black,line width=0.2pt] (-2,0) node[left=0.5mm] {$x$} circle (4pt);
	    	\draw [fill=black,line width=0.2pt] (2,0) node[right=0.5mm] {$y$} circle (4pt);
	    	\draw [fill=white,line width=0.8pt] (0,3) node[above] {$v$} circle (3pt);  
	    \end{tikzpicture}
	    \caption{$T_{1,1,7}$}
	    \label{fig:T117}
	\end{subfigure}
	\begin{subfigure}[t]{.34\textwidth}
		\centering
		\begin{tikzpicture}[scale=0.55]
			\draw [line width=0.8pt] (-2,2)-- (-2,-2);
			\draw [line width=0.8pt] (2,2)-- (2,-2);
			\draw [line width=0.8pt] (-2,2)-- (2,2);

     		\draw [line width=0.8pt, black] (2,-2) to (-2,-2);
  		    \draw [bend left=10,line width=0.8pt, black] (2,-2) to (-2,-2);
		    \draw [bend right=10,line width=0.8pt, black] (2,-2) to (-2,-2);
		    \draw [bend left=20,line width=0.8pt, black] (2,-2) to (-2,-2);
		    \draw [bend right=20,line width=0.8pt, black] (2,-2) to (-2,-2);
		    \draw [bend left=30,line width=0.8pt, black] (2,-2) to (-2,-2);
		    \draw [bend right=30,line width=0.8pt, black] (2,-2) to (-2,-2);
			
			\draw [fill=white,line width=0.8pt] (-2,2) node[left=0.5mm] {$u$} circle (3pt);
			\draw [fill=white,line width=0.8pt] (2,2) node[right=0.5mm] {$v$} circle (3pt);
			\draw [fill=black,line width=0.2pt] (-2,-2)node[left=0.5mm] {$x$} circle (4pt);
			\draw [fill=black,line width=0.2pt] (2,-2)node[right=0.5mm] {$y$} circle (4pt);	
		\end{tikzpicture}
		\caption{\footnotesize$Q_{1,1,1,7}$}
		\label{fig:Q1117}
	\end{subfigure}
	\begin{subfigure}[t]{.3\textwidth}
		\centering
		\begin{tikzpicture}[scale=0.55]
			\draw [line width=0.8pt] (-2,0.5)-- (-2,-2);
			\draw [line width=0.8pt] (2,0.5)-- (2,-2);
			\draw [line width=0.8pt] (-2,0.5)-- (0,2);
			\draw [line width=0.8pt] (0,2)-- (2,0.5);
     		\draw [line width=0.8pt, black] (2,-2) to (-2,-2);
  		    \draw [bend left=10,line width=0.8pt, black] (2,-2) to (-2,-2);
		    \draw [bend right=10,line width=0.8pt, black] (2,-2) to (-2,-2);
		    \draw [bend left=20,line width=0.8pt, black] (2,-2) to (-2,-2);
		    \draw [bend right=20,line width=0.8pt, black] (2,-2) to (-2,-2);
		    \draw [bend left=30,line width=0.8pt, black] (2,-2) to (-2,-2);
		    \draw [bend right=30,line width=0.8pt, black] (2,-2) to (-2,-2);

			\draw [fill=white,line width=0.8pt] (-2,0.5) node[left=0.5mm] { $u$} circle (3pt);
			\draw [fill=white,line width=0.8pt] (2,0.5) node[right=0.5mm] {$v$} circle (3pt);
			\draw [fill=white,line width=0.8pt] (0,2) node[above=0.5mm] {$w$} circle (3pt);
			\draw [fill=black,line width=0.8pt] (-2,-2)node[left=0.5mm] {$x$} circle (4pt);
			\draw [fill=black,line width=0.8pt] (2,-2)node[right=0.5mm] {$y$} circle (4pt);
		\end{tikzpicture}
		\caption{\footnotesize$V_{1,1,1,1,7}$}
		\label{fig:V11117}
	\end{subfigure}
	\begin{subfigure}[t]{.34\textwidth}
		\centering
		\begin{tikzpicture}[scale=0.55]
			\draw [bend right=10, line width=0.8pt] (0,3)to (-2,0);
			\draw [bend left=10, line width=0.8pt] (0,3)to (-2,0);
			
			\draw [bend right=10, line width=0.8pt] (0,3)to (2,0);
			\draw [bend left=10, line width=0.8pt] (0,3)to (2,0);
		    \draw [bend left=5,line width=0.8pt, black] (2,0) to (-2,0);
		    \draw [bend right=5,line width=0.8pt, black] (2,0) to (-2,0);
		    \draw [bend left=15,line width=0.8pt, black] (2,0) to (-2,0);
		    \draw [bend right=15,line width=0.8pt, black] (2,0) to (-2,0);
		    \draw [bend left=25,line width=0.8pt, black] (2,0) to (-2,0);
		    \draw [bend right=25,line width=0.8pt, black] (2,0) to (-2,0);			

			\draw [fill=white, line width=0.8pt] (0,3) node[above] {$u$} circle (3pt);
			\draw [fill=black, line width=0.2pt] (-2,0)node[left=0.5mm] {$x$} circle (4pt);
			\draw [fill=black, line width=0.2pt] (2,0)node[right=0.5mm] {$y$} circle (4pt);
		\end{tikzpicture}
		\caption{\footnotesize$T_{2,2,6}$}
		\label{fig:T226}
	\end{subfigure}
	\begin{subfigure}[t]{.34\textwidth}
		\centering
		\begin{tikzpicture}[scale=0.65]
			\draw [line width=0.8pt] (0.,0.)-- (-1.5,-2);
			\draw [line width=0.8pt] (0.,0.)-- (1.5,-2);
			\draw [line width=0.8pt] (0.,-4)-- (-1.5,-2);
			\draw [line width=0.8pt] (0.,-4)-- (1.5,-2);
		    \draw [bend left=6,line width=0.8pt, black] (1.5,-2) to (-1.5,-2);
		    \draw [bend right=6,line width=0.8pt, black] (1.5,-2) to (-1.5,-2);
		    \draw [bend left=18,line width=0.8pt, black] (1.5,-2) to (-1.5,-2);
		    \draw [bend right=18,line width=0.8pt, black] (1.5,-2) to (-1.5,-2);
		    \draw [bend left=30,line width=0.8pt, black] (1.5,-2) to (-1.5,-2);
		    \draw [bend right=30,line width=0.8pt, black] (1.5,-2) to (-1.5,-2);	   

			\draw [fill=white,line width=0.8pt] (0.,0.) node[above] { $u$} circle (3pt);
			\draw [fill=white,line width=0.8pt] (0.,-4) node[below] { $v$} circle (3pt);
			\draw [fill=black,line width=0.2pt] (-1.5,-2)node[left=0.5mm] { $x$} circle (4pt);
			\draw [fill=black,line width=0.2pt] (1.5,-2)node[right=0.5mm] { $y$} circle (4pt);
		\end{tikzpicture}
		\caption{\footnotesize$T^o_{1,1,7}$}
		\label{fig:T117o}
	\end{subfigure}
	\begin{subfigure}[t]{.3\textwidth}
		\centering
		\begin{tikzpicture}[scale=0.65]
			\draw [line width=0.8pt] (-1.5,0)-- (-1.5,-2);
			\draw [line width=0.8pt] (1.5,0)-- (1.5,-2);
			\draw [line width=0.8pt] (-1.5,0)-- (1.5,0);
			
			\draw [line width=0.8pt] (0.,-4)-- (-1.5,-2);
			\draw [line width=0.8pt] (0.,-4)-- (1.5,-2);
		    \draw [bend left=6,line width=0.8pt, black] (1.5,-2) to (-1.5,-2);
		    \draw [bend right=6,line width=0.8pt, black] (1.5,-2) to (-1.5,-2);
		    \draw [bend left=18,line width=0.8pt, black] (1.5,-2) to (-1.5,-2);
		    \draw [bend right=18,line width=0.8pt, black] (1.5,-2) to (-1.5,-2);
		    \draw [bend left=30,line width=0.8pt, black] (1.5,-2) to (-1.5,-2);
		    \draw [bend right=30,line width=0.8pt, black] (1.5,-2) to (-1.5,-2);

			\draw [fill=white,line width=0.8pt] (-1.5,0) node[left=0.5mm] {$u$} circle (3pt);
			\draw [fill=white,,line width=0.8pt] (1.5,0) node[right=0.5mm] {$v$} circle (3pt);
			\draw [fill=black,line width=0.2pt] (-1.5,-2)node[left=0.5mm] {$x$} circle (4pt);
			\draw [fill=black,line width=0.2pt] (1.5,-2)node[right=0.5mm] {$y$} circle (4pt);
			\draw [fill=white,line width=0.8pt] (0.,-4) node[below] {$w$} circle (3pt);
		\end{tikzpicture}
		\caption{\footnotesize$Q^o_{1,1,1,7}$}
		\label{fig:Q1117o}
	\end{subfigure}
	\caption{\small Some forbidden configurations in $G$: (a) for~\Cref{cla:T117}, and (b)-(f) for~\Cref{cla:configurations_pairs_at most5}}
	\label{fig:configurations_pairs_at most5}
\end{figure}
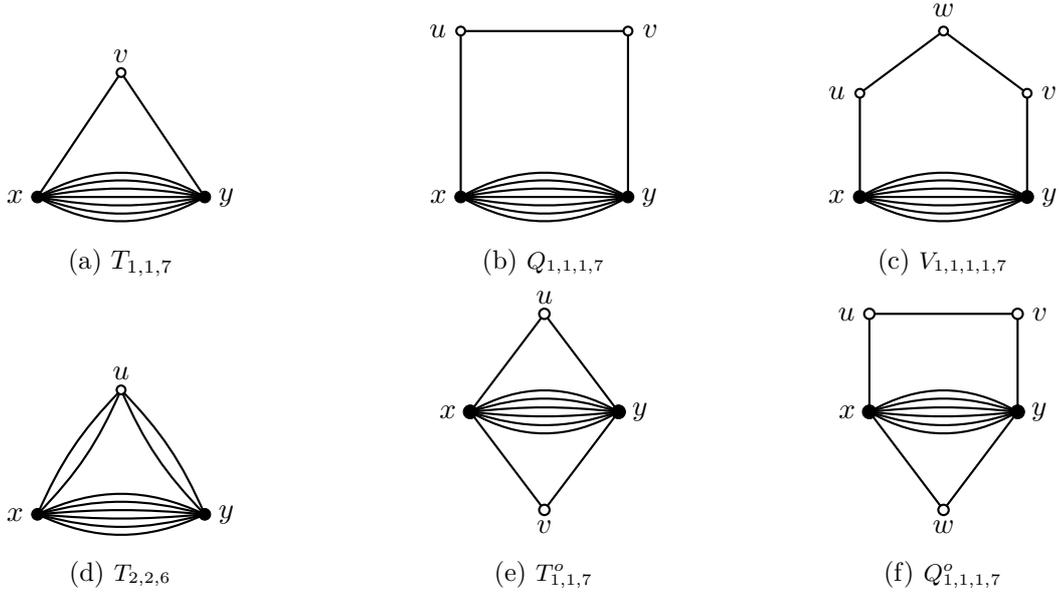

\begin{lemma}\label{cla:configurations_pairs_at most5}
	$G$ contains none of $Q_{1,1,1,7}$, $V_{1,1,1,1,7}$, $T_{2,2,6}$, $T_{1,1,7}^o$, and $Q_{1,1,1,7}^o$. 
\end{lemma}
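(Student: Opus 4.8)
The plan is to handle each of the five configurations separately, following exactly the template already established in the proofs of \Cref{cla:T117} and \Cref{lem:14_lift}: in each case, lift one or more edge-pairs to produce a large $\alpha K_2$ (with $\alpha\ge 8$, or $\alpha\ge 9$ after using \Cref{cla:lift_edge_pairs} to rule out accidental $8K_2$'s), contract it, and then verify via the Gap Lemma (\Cref{lem:partition_type_value}) and \Cref{ob:partition_bound_notin} that the resulting graph is $\mathcal{S}$-good or satisfies one of the other good properties, contradicting the corresponding clause of \Cref{thm:main_statement_SZ9_k8}. The key bookkeeping input throughout is \Cref{lem:edge_connectivity_14}, which tells us $G$ is $14$-edge-connected, so each lift of an edge-pair drops edge-connectivity by at most $2$ and we retain enough connectivity (typically $\ge 9$) to apply \Cref{ob:partition_bound_notin}\ref{ob:5_10-edge-connected}.

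Concretely, for $T_{2,2,6}$ (the configuration $uxy$ with $\mu(ux)=\mu(uy)=2$, $\mu(xy)=6$), I would lift two edge-pairs at $u$ (namely $(xu,uy)$ twice) to create a $2+2+6=8$-fold $K_2$ between $x$ and $y$, i.e.\ an $8K_2$, then contract it; since only two new edges were created and $G$ has no $T_{1,1,7}$, \Cref{cla:lift_edge_pairs} guarantees no stray $8K_2$ appears, and the contracted graph $G'$ has $v(G')=v(G)-1\ge 4$, $e(G')\ge e(G)-2-6=e(G)-8$ (actually we must count carefully: $2\times 2+3\times 0$ extra edges lost plus $8$ contracted, so $e(G')\ge e(G)-8-2$), and edge-connectivity $\ge 14-4=10$; the weight computation is then identical to that in \Cref{cla:T117}, showing $G'$ is $\mathcal{S}$-good and contradicting \Cref{thm:main_statement_SZ9_k8}\ref{thm:2_main_statement_SZ9_k8}. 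For $T^o_{1,1,7}$ (two vertices $u,v$ each joined to both $x$ and $y$ by single edges, plus $7$ edges $xy$), lift the pair $(ux,uy)$ to form an extra $xy$-edge, getting $8K_2$ between $x,y$, contract, and argue as before; the "$o$" superscript versions ($T^o_{1,1,7}$ and $Q^o_{1,1,7}$) are just subdivided analogues where after the relevant lifts the extra auxiliary vertices ($v$ in $T^o$, or $w$ in $Q^o$) become degree-$2$ vertices whose two edges can be lifted away, or are absorbed by further contraction—here I would invoke \Cref{prop:SZ9_lifting}\ref{prop:SZ9_lifting_2} implicitly via the structure of the main theorem's clause (3), much as in \Cref{lem:14_lift}. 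For $Q_{1,1,1,7}$ and $V_{1,1,1,1,7}$ I would similarly lift along the path from $x$ to $y$ through $u,v$ (resp.\ $u,v,w$) to replace that path by a single $xy$-edge, producing $8K_2$; these are precisely "edge-triple" and "edge-quadruple" lifts, so \Cref{cla:lift_edge_pairs} with $X=1,Y=1$ or $X=1,Z=1$ (satisfying $X+2Y+3Z\le 3$) is exactly what is needed to ensure the lift does not create an $8K_2$ elsewhere and keeps edge-connectivity $\ge 9$.

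The routine part of each case is the weight arithmetic: for the trivial partition $\mathcal{P}'_0$ of the contracted graph one gets $w_{G'}(\mathcal{P}'_0)\ge w(G)-2m+23\ge 1$ where $m\le 11$ is the number of edges removed (the $8$ contracted plus at most $3$ lost in lifting), hence $|\mathcal{P}'_0|\ge 4$ forces $G'/\mathcal{P}'_0\notin\mathcal{N}\cup\mathcal{W}^*$; for a nontrivial partition $\mathcal{P}'$ of type $(2^+,1^+,\ast)$, the corresponding partition of $V(G)$ has type $(3^+,1^+,\ast)$ or $(2^+,2^+,\ast)$, so \Cref{lem:partition_type_value}\ref{lem:2_partition_type_value_3111},\ref{lem:3_partition_type_value_2211} give $w_G(\mathcal{P})\ge 16$, hence $w_{G'}(\mathcal{P}')\ge 16-2m'\ge 6$ where $m'\le 5$, and $9$-edge-connectivity plus \Cref{ob:partition_bound_notin}\ref{ob:5_10-edge-connected} finishes it. The main obstacle—and the reason this lemma is stated separately—is matching each configuration to the precise clause of \Cref{thm:main_statement_SZ9_k8} it contradicts: the configurations with an extra degree-$2$-ish vertex ($T^o_{1,1,7}$, $Q^o_{1,1,7}$, and also $Q_{1,1,1,7}$, $V_{1,1,1,1,7}$) may after contraction still have one leftover low-degree vertex, so one must be careful whether to aim for clause (2) (contract an $\mathcal{N}$-good $8K_2$, get $\mathcal{S}$-good quotient) or clause (3) (lift at a low-degree vertex, get $\mathcal{S}$-good $G''-v$); in a couple of cases one may need, as in the final paragraph of \Cref{lem:14_lift}, a small secondary argument ruling out that the contracted graph is exactly $8K_2$ or a small $T_{a,b,c}$, using the absence of $T_{1,1,7}$ (\Cref{cla:T117}) to bound multiplicities and then invoking \Cref{lem:SZ9_small_graphs} or clause (1)/(2) directly.
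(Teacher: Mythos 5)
Your overall strategy---lift within the quotas of \Cref{cla:lift_edge_pairs} so that the multiplicity of $xy$ reaches $8$---is exactly the paper's, and the lifts you choose per configuration are essentially the right ones; but you have inverted the role of \Cref{cla:lift_edge_pairs}. That lemma asserts that a graph obtained from $G$ by lifting with $X+Y+Z\le 2$ and $X+2Y+3Z\le 3$ contains \emph{no} $8K_2$ at all, not merely ``no stray $8K_2$.'' Consequently the proof is finished the moment your lift produces $\mu_{G_1}(xy)=8$: this directly contradicts \Cref{cla:lift_edge_pairs}, and no contraction, weight computation, or appeal to \Cref{thm:main_statement_SZ9_k8} is needed in this lemma at all---that work was already done, once and generically, inside the proof of \Cref{cla:lift_edge_pairs}. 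Your plan of contracting the created $8K_2$ and re-verifying $\mathcal{S}$-goodness case by case essentially re-proves \Cref{cla:lift_edge_pairs} five times; it would go through, but your citation of that lemma (``guarantees no stray $8K_2$ appears'') is a misstatement, and applying the lemma as actually stated terminates the argument immediately.

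Two concrete slips in the details. First, for $V_{1,1,1,1,7}$ you write ``$X=1,Z=1$,'' which gives $X+2Y+3Z=4>3$ and falls outside the hypotheses of \Cref{cla:lift_edge_pairs}; the correct choice is a single edge-quadruple lift ($X=Y=0$, $Z=1$), which by itself creates the eighth $xy$-edge (similarly $Q_{1,1,1,7}$ and $Q^o_{1,1,1,7}$ need only $Y=1$, and $T^o_{1,1,7}$ only $X=1$). Second, your treatment of the ``$o$'' configurations is off: after lifting at $u$ (or along $u,v$), the auxiliary vertices do not become degree-$2$ vertices---they are vertices of $G$ with degree at least $14$ by \Cref{lem:edge_connectivity_14}, losing at most two incident edges---so nothing needs to be ``lifted away'' or routed through clause (3) of \Cref{thm:main_statement_SZ9_k8} or \Cref{prop:SZ9_lifting}; in the contraction route they are just ordinary vertices of the quotient, and in the paper's route they play no role whatsoever. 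Likewise, no secondary analysis in the style of the final paragraph of \Cref{lem:14_lift} is needed here: the generic bounds $w_{G'}(\mathcal{P}'_0)\ge 1$ and $w_{G'}(\mathcal{P}')\ge 6$, together with $9$-edge-connectivity and \Cref{ob:partition_bound_notin}\ref{ob:5_10-edge-connected}, exclude all of $\mathcal{N}\cup\mathcal{W}^*$ uniformly.
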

\begin{proof}
    If $G$ contains one of these 5 configurations, shown in~\Cref{fig:Q1117}--\ref{fig:Q1117o}, then we lift some edge-pair, edge-triple, edge-quadruple, or combination of these as allowed in \Cref{cla:lift_edge_pairs}.
    We lift at most $5$ edges, so the resulting graph $G_1$ satisfies the hypotheses of \Cref{cla:lift_edge_pairs}; namely, $X+Y+Z\leq 2$ and $X+2Y+3Z\leq 3$. 
    However, we have 
    $\mu_{G_1}(xy)=8$. This contradicts~\Cref{cla:lift_edge_pairs}, which implies the result.
\end{proof}

\begin{lemma}\label{cla:T226o}
	$G$ contains no $T^{o}_{2,2,6}$.
\end{lemma}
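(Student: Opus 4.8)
The plan is to argue by contradiction. Suppose $G$ contains a copy of $T^o_{2,2,6}$, which I take to be $T_{2,2,6}$ with one copy of its heavy edge subdivided: vertices $\{u,w,x,y\}$ with $\mu(ux)=\mu(uy)=2$, $\mu(wx)=\mu(wy)=1$, and a bundle of parallel $xy$-edges. Since $G$ has no $T_{2,2,6}$ by \Cref{cla:configurations_pairs_at most5}, the bundle has exactly $5$ edges (else $\{u,x,y\}$ would span a $T_{2,2,6}$). So I would lift three edge-pairs, $(xw,wy)$ at $w$ and the two copies of $(xu,uy)$ at $u$, to get a plane graph $G_1$ with $\mu_{G_1}(xy)=8$; write $H$ for this $8K_2$ (which is $\mathcal{N}$-good) and set $G':=G_1/H$ with $z$ the new vertex. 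The goal is to prove $G'$ is $\mathcal{S}$-good, which contradicts \Cref{thm:main_statement_SZ9_k8}\ref{thm:2_main_statement_SZ9_k8}. (As usual the lifted pairs lie consecutively along faces, so planarity is preserved.) Three liftings exceed the budget of \Cref{cla:lift_edge_pairs}, so that lemma cannot be quoted; the argument is really a by-hand version of the proof of \Cref{cla:T117}, and the extra lift is exactly what makes it harder.

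For the weight bookkeeping, $v(G')=v(G)-1\ge4$ and $e(G')=e(G)-11$, so the trivial partition of $V(G')$ has weight $w_G(\mathcal{P}_0)+1\ge1$ and quotient not in $\mathcal{N}\cup\mathcal{W}^*$ (which contain only graphs on at most three vertices). For a nontrivial partition $\mathcal{P}'$ of $V(G')$ let $\mathcal{P}$ be the corresponding partition of $V(G)$ and $P^*$ the part containing $x$ and $y$. Since $H$ and the three new $xy$-edges lie inside $P^*$ and the contraction is internal to $P^*$, one gets exactly $w_{G'}(\mathcal{P}')=w_G(\mathcal{P})-4c$, where $c\in\{0,1,2,3\}$ counts which of $w$ and $u$ fall outside $P^*$, with $u$ weighted twice. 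If $c=0$ then $|P^*|\ge4$ and \Cref{lem:partition_type_value}\ref{lem:4_partition_type_value_4111} gives $w_G(\mathcal{P})\ge20$; if $c\in\{1,2\}$ then $|P^*|\ge3$ and \Cref{lem:partition_type_value}\ref{lem:2_partition_type_value_3111} gives $w_G(\mathcal{P})\ge16$; if $c=3$ and $|P^*|=2$ then $\mathcal{P}$ has type $(2^+,2^+,\ast)$, so \Cref{lem:partition_type_value}\ref{lem:3_partition_type_value_2211} gives $w_G(\mathcal{P})\ge18$. In all these subcases $w_{G'}(\mathcal{P}')\ge6$, and then $G'/\mathcal{P}'\notin\mathcal{N}\cup\mathcal{W}^*$: by \Cref{ob:partition_bound_notin}\ref{ob:2_7_WN} when $|\mathcal{P}'|\ge3$, and by a sharper Gap-Lemma estimate (or the direct edge-cut bounds below) giving weight $\ge13$ when $|\mathcal{P}'|=2$.

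The remaining and expected main obstacle is the case $c=3$ with $|P^*|\ge3$, where the crude estimate only gives $w_{G'}(\mathcal{P}')\ge4$; here $P^*$ contains $x,y$ and a vertex other than $u,w$, while $u,w$ lie in other parts, and --- since three lifts drop the guaranteed edge-connectivity of $G'$ to $14-6=8$ --- the clean ``$w\ge6$ plus $9$-edge-connected'' route of \Cref{ob:partition_bound_notin}\ref{ob:5_10-edge-connected} is unavailable. If $u$ and $w$ share a part, then $\mathcal{P}$ has type $(3^+,2^+,\ast)$ and \Cref{lem:partition_type_value}\ref{lem:5_partition_type_value_3211} upgrades the bound to $w_{G'}(\mathcal{P}')\ge13$, which settles both $|\mathcal{P}'|=2$ and $|\mathcal{P}'|\ge3$. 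Otherwise $|\mathcal{P}'|\ge3$ and \Cref{ob:partition_bound_notin}\ref{ob:4_5_N} kills $\mathcal{N}$; what remains is to prevent $G'/\mathcal{P}'$ from being one of the three-vertex ($16$-edge) multitriangles in $\mathcal{W}^*$. This is the one genuinely delicate step, and I expect it to require a case-specific argument --- pulling the relevant cuts of $G'$ back to $G$, using the $15$- and $18$-edge lower bounds of \Cref{lem:edge_connectivity}\ref{lem:edge_cut_14} and \ref{lem:edge_cut_18} together with the parallel edges that $u$ and $w$ are forced to send into $P^*$, and possibly refining the reduction in the tightest instance. Once every such subcase is closed, $G'$ is $\mathcal{S}$-good and the contradiction follows; the whole difficulty is concentrated in this $c=3$, large-$P^*$ case, which is precisely the price of one extra lift compared with \Cref{cla:configurations_pairs_at most5}.
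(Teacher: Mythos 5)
Your reduction is exactly the paper's: lift the pair at the degree-$2$ vertex and the two pairs at $u$, contract the resulting $8K_2$, and show the contracted graph $G'$ is $\mathcal{S}$-good to contradict \Cref{thm:main_statement_SZ9_k8}\ref{thm:2_main_statement_SZ9_k8}. Your weight identity $w_{G'}(\mathcal{P}')=w_G(\mathcal{P})-4c$ is correct, and your handling of the cases $c\le 2$, of $c=3$ with $|P^*|=2$ or with $u,w$ in a common part, and of two-part partitions via cut bounds pulled back to $G$ (which is how the paper proves $G'$ is $9$-edge-connected and then invokes \Cref{ob:partition_bound_notin}\ref{ob:5_10-edge-connected}) all track the paper's argument. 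You have also correctly isolated the genuinely hard case: $u$, $w$, and $w_{xy}$ in three different parts with the quotient threatening to be a $16$-edge multitriangle of $\mathcal{W}^*$.

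But that case is precisely where the real work lies, and you leave it unproved: "I expect it to require a case-specific argument \ldots possibly refining the reduction in the tightest instance" is not an argument, and the tools you name (the $15$/$18$ cut bounds of \Cref{lem:edge_connectivity} plus edges from $u,w$ into $P^*$) do not by themselves rule out the $16$-edge triangle. The paper closes it differently. In this situation $|\mathcal{P}'|=3$ forces $v(G')=4$, hence $v(G)=5$, and $w(G)\ge 0$ gives $e(G)\ge 37$; also $\mu_G(xy)=5$ since $G$ has no $T_{2,2,6}$. Splitting on which part contains the fifth vertex $z$: if $z$ shares a part with $u$ (resp.\ with the degree-$2$ vertex $v$), then $\mu_G(uz)\ge 37-11-16=10$ (resp.\ $\mu_G(vz)\ge 10$), so $G$ contains an $\mathcal{N}$-good $\alpha K_2$ with $\alpha\ge 10$, contradicting \Cref{thm:main_statement_SZ9_k8}\ref{thm:1_main_statement_SZ9_k8} --- note this switches to conclusion (1) about $G$ itself rather than showing $G'$ is $\mathcal{S}$-good. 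If instead $z$ shares a part with $w_{xy}$, the paper derives a degree contradiction at $u$: from $e(G'/\mathcal{P}')=16$ and $d_{G'}(v)\ge 12$ one gets $e_{G'}(P_1,P_2)\le 4$, and $\mu_G(uv)\le 5$ because a sixth $uv$-edge together with the paths $u x v$ and $u y v$ would create a $T^o_{1,1,7}$ (forbidden by \Cref{cla:configurations_pairs_at most5}), so $d_{G'}(u)\le 9$, contradicting $d_{G'}(u)\ge 10$. None of these ingredients (the global count $e(G)\ge 37$, the $\alpha K_2$ escape to conclusion (1), the $T^o_{1,1,7}$-based bound $\mu_G(uv)\le 5$) appear in your sketch, so the proposal has a genuine gap exactly at its acknowledged "delicate step."
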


\begin{proof}
	Suppose $G$ has a copy of $T^{o}_{2,2,6}$ as shown in~\Cref{fig:T226o}. We lift an edge-pair $(xv,vy)$ at $v$ and two edge-pairs $(xu,uy)$ at $u$, contract the resulting $8K_2$ into a new vertex $w_{xy}$, and denote the resulting graph by $G'$.
 
    First we show that $G'$ is $9$-edge-connected. By~\Cref{lem:edge_connectivity_14}, $d_{G'}(u)=d_{G}(u)-4\geq 14-4=10$ and $d_{G'}(v)=d_{G}(v)-2\geq 12$. Since $G$ contains no $T_{2,2,6}$ by~\Cref{cla:configurations_pairs_at most5}, $\mu_{G}(xy)=5$. 
    Thus, $d_{G'}(w_{xy})=d_{G}(x)+d_{G}(y)-2\times\mu_{G}(xy)-6\geq 14+14-2\times5-6=12$. 
    Moreover, $d_{G'}(z)=d_{G}(z)\geq 14$ for every vertex $z\in V(G')\setminus \{u,v,w_{xy}\}$. 
    If $[X,X^c]$ is an edge-cut of $G'$ with $|X|\ge2$ and $|X^c|\ge2$, then \cref{lem:edge_connectivity}\ref{lem:edge_cut_14} gives 
    $|[X,X^c]|\geq 15-6=9$. Therefore, $G'$ is $9$-edge-connected.
    
    Now we show that $G'$ is $\mathcal{S}$-good. Note that $v(G')=v(G)-1$ and $e(G')=e(G)-11$. For the trivial partition $\mathcal{P}'_0$ of $V(G')$, $w_{G'}(\mathcal{P}'_0)\geq w(G)-2\times 11+23\times 1 =1$. Since $|\mathcal{P}'_0|=v(G')\geq 4$, $G'/\mathcal{P}_0'\notin\mathcal{N}\cup \mathcal{W}^*$. Next, we consider the values of $w_{G'}(\mathcal{P}')$ for every nontrivial partition $\mathcal{P}'$. Let $\mathcal{P}$ be the partition of $V(G)$ corresponding to $\mathcal{P}'$. We consider the
    following two cases. 

    {\bf Case 1: at least one of $\bm{u}$ and $\bm{v}$ is in the same part as $\bm{w_{xy}}$.} 
    Note that $\mathcal{P}$ is type $(3^+,1^+,\ast)$. So~\Cref{lem:partition_type_value}\ref{lem:2_partition_type_value_3111} gives
    $w_{G'}(\mathcal{P}')\geq w_G(\mathcal{P})-2\times 4\geq 16-8=8$.
    Since $G'$ is $9$-edge-connected, \Cref{ob:partition_bound_notin}\ref{ob:5_10-edge-connected} implies $G'/\mathcal{P}'\notin\mathcal{N}\cup\mathcal{W}^*$.

    {\bf Case 2:
    $\bm{u}$, $\bm{v}$, and $\bm{w_{xy}}$ are in three different parts of $\bm{\mathcal{P}'}$.} 
    Now none of the 6 edges in the multiset $\{xu,xu,uy,uy,xv,vy\}$ are counted in $\omega_{G'}(\mathcal{P}')$. 
    If $\mathcal{P}'$ has type $(3^+,1^+,\ast)$ or type $(2^+,2^+,\ast)$, then $\mathcal{P}$ has either type $(4^+,1^+,\ast)$ 
    or type $(3^+,2^+,\ast)$.  
    So~\Cref{lem:partition_type_value}\ref{lem:4_partition_type_value_4111} and \ref{lem:5_partition_type_value_3211}
    imply $w_{G'}(\mathcal{P}')\geq w_G(\mathcal{P})-2\times 6\geq \min\{20,25\}-12=8$.
    Since $G'$ is $9$-edge-connected, \Cref{ob:partition_bound_notin}\ref{ob:5_10-edge-connected} implies $G'/\mathcal{P}'\notin\mathcal{N}\cup\mathcal{W}^*$. 
    Assume instead that $\mathcal{P}'$ is type $(2,1,\ast)$, 
    so $|\mathcal{P}'|\geq 3$. Now $\mathcal{P}$ is either type $(3,1,\ast)$ or type $(2,2,\ast)$. Furthermore, 
    \Cref{lem:partition_type_value}\ref{lem:2_partition_type_value_3111} and \ref{lem:3_partition_type_value_2211} give
    $w_{G'}(\mathcal{P}')\geq w_G(\mathcal{P})-2\times 6\geq \min\{16,18\}-12=4$.
    Thus,~\Cref{ob:partition_bound_notin}\ref{ob:4_5_N} implies $G'/\mathcal{P}'\notin\mathcal{N}\cup\mathcal{W}^*\backslash \{T_{a,b,c}:a+b+c=16\}$.
    Note that $G'/\mathcal{P}'\notin\mathcal{N}\cup\mathcal{W}^*$ unless $\mathcal{P}'$ is type $(2,1,\ast)$ with 
    $|\mathcal{P}'|=3$ and $G'/\mathcal{P}'\in\{T_{a,b,c}:a+b+c=16\}$.
    We now handle this exceptional case.  
    
    Since $\mathcal{P}'$ has type $(2,1,\ast)$ with $|\mathcal{P}'|=3$, we have $v(G')=4$ and $v(G)=5$. 
    Let $z$ be the vertex in $V(G')$ that is distinct from $v$, $u$, and $w_{xy}$.  
    Recall that $w(G)\geq 0$, so $e(G)\geq \lceil 73/2\rceil = 37$. By~\Cref{cla:configurations_pairs_at most5}, $G$ contains no $T_{2,2,6}$, so $\mu_{G}(xy)=5$. 
    We consider different kinds of partitions of $V(G)$ based on which part contains the vertex $z$. If $\mathcal{P}'=\{\{w_{xy}\},\{u,z\},\{v\}\}$, then $\mathcal{P}=\{\{x,y\},\{u,z\},\{v\}\}$. 
    This gives $\mu_{G}(uz)=e(G)-e(T^o_{2,2,6})-e(G'/\mathcal{P}')\geq 37-11-16 = 10$. 
    Hence, $G$ contains an $\mathcal{N}$-good subgraph $\alpha K_2$ with $\alpha\geq 10$, 
    contradicting~\Cref{thm:main_statement_SZ9_k8}\ref{thm:1_main_statement_SZ9_k8}. 
    Similarly, if $\mathcal{P}'=\{\{w_{xy}\},\{u\},\{v,z\}\}$, then $\mu_{G}(vz)\geq 10$, a contradiction. 
    So we assume instead that $z$ is in the same part as $w_{xy}$.
    That is, $\mathcal{P}' = \{\{w_{xy},z\},\{u\},\{v\}\}$, so $\mathcal{P}=\{\{x,y,z\},\{u\},\{v\}\}$. Now a short argument on the edges incident to $u$ will give a contradiction.
    In the second paragraph of this proof, we showed 
    $d_{G'}(u)\geq 10$ and $d_{G'}(v)\geq 12$.  
    Because $e(G'/\mathcal{P}')=16$, we get $e_{G'}(P_1,P_2)=16-d_{G'}(v)\leq 4$. 
    Since $G$ contains no $T^o_{1,1,7}$, we also get $e_{G'}(P_2,P_3)=\mu_{G}(uv)\leq 5$. 
    As $P_2=\{u\}$, together these give $d_{G'}(u)=e_{G'}(P_1,P_2)+e_{G'}(P_2,P_3)\leq 4+5=9$, a contradiction. 
    Therefore, $G'/\mathcal{P}'\notin \mathcal{N}\cup\mathcal{W}^*$ for every partition $\mathcal{P}'$ of $V(G')$.
    
    \smallskip
    In each case, $G'$ is $\mathcal{S}$-good, contradicting~\Cref{thm:main_statement_SZ9_k8}\ref{thm:2_main_statement_SZ9_k8}.
\end{proof}

\begin{figure}[!tbp]
	\centering
	\begin{subfigure}[t]{.45\textwidth}
		\centering
		\begin{tikzpicture}[scale=0.74]
			\draw [bend right=10, line width=0.8pt] (0,0)to (-1.5,-2);
			\draw [bend left=10, line width=0.8pt] (0,0)to (-1.5,-2);
			
			\draw [bend right=10,line width=0.8pt] (0,0)to (1.5,-2);
			\draw [bend left=10, line width=0.8pt] (0,0)to (1.5,-2);
			
			\draw [line width=0.8pt] (0,-4)-- (-1.5,-2);
			\draw [line width=0.8pt] (0,-4)-- (1.5,-2);
			
			\draw [line width=0.8pt, black] (1.5,-2) to (-1.5,-2); 
			\draw [bend left=10,line width=0.8pt, black] (1.5,-2) to (-1.5,-2);
			\draw [bend right=10,line width=0.8pt, black] (1.5,-2) to (-1.5,-2);
			\draw [bend left=20,line width=0.8pt, black] (1.5,-2) to (-1.5,-2);
			\draw [bend right=20,line width=0.8pt, black] (1.5,-2) to (-1.5,-2);
            
            \draw [fill=white,line width=0.8pt] (0,0) node[above] {$u$} circle (3pt); 
            \draw [fill=white,line width=0.8pt] (0,-4) node[below] {$v$} circle (3pt);
            \draw [fill=black,line width=0.2pt] (-1.5,-2) node[left=0.5mm] {$x$} circle (4pt); 
            \draw [fill=black,line width=0.2pt] (1.5,-2) node[right=0.5mm] {$y$} circle (4pt);   
		\end{tikzpicture}
		\caption{$T^{o}_{2,2,6}$}
		\label{fig:T226o}
	\end{subfigure} 
	\begin{subfigure}[t]{.45\textwidth}
		\centering
		\begin{tikzpicture}[scale=.6]			
			\draw [line width=0.8pt, black] (0,3) to (2,0);
			\draw [bend left=12,line width=0.8pt, black] (2,0) to (0,3);
			\draw [bend right=12,line width=0.8pt, black] (2,0) to (0,3);
			
			\draw [line width=0.8pt, black] (0,3) to (-2,0);
			\draw [bend left=12,line width=0.8pt, black] (0,3) to (-2,0);
			\draw [bend right=12,line width=0.8pt, black] (0,3) to (-2,0);
			
			\draw [line width=0.8pt, black] (2,0) to (-2,0);
			\draw [bend left=10,line width=0.8pt, black] (2,0) to (-2,0);
			\draw [bend right=10,line width=0.8pt, black] (2,0) to (-2,0);
			\draw [bend left=20,line width=0.8pt, black] (2,0) to (-2,0);
			\draw [bend right=20,line width=0.8pt, black] (2,0) to (-2,0);
               
            \draw [fill=white,line width=0.8pt] (0,3) node[above] {$v$} circle (3pt); 
            \draw [fill=black,line width=0.8pt] (2,0) node[right=0.5mm] {$y$} circle (4pt); 
            \draw [fill=black,line width=0.8pt] (-2,0) node[left=0.5mm] {$x$} circle (4pt); 
		\end{tikzpicture}
		\caption{$T_{3,3,5}$}
		\label{fig:T335}
	\end{subfigure} 
	\caption{\small The graphs $T^{o}_{2,2,6}$ in~\Cref{cla:T226o} and $T_{3,3,5}$ in~\Cref{cla:T335}}
	\label{fig:configurations_pairs}
\end{figure}

To prove our next result, we again use \Cref{lem:14_lift} to handle vertices in $G$ of small degree.

\begin{lemma}\label{cla:T335}
	$G$ contains no $T_{3,3,5}$.
\end{lemma}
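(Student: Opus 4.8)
The plan is to suppose for contradiction that $G$ contains a copy of $T_{3,3,5}$ on vertices $x,y,v$, with $\mu_G(vx),\mu_G(vy)\ge 3$ and $\mu_G(xy)\ge 5$ (so $v$ is the vertex joined to each of $x,y$ by three parallel edges), and to follow the usual strategy: lift three edge-pairs $(xv,vy)$ at $v$ so as to build an $8K_2$ between $x$ and $y$, then reduce. First I would observe that, since $G$ has no $T_{2,2,6}$ (\Cref{cla:configurations_pairs_at most5}) while $\mu_G(vx),\mu_G(vy)\ge 3\ge 2$, no two of $x,y,v$ are joined by more than $5$ edges; in particular $\mu_G(xy)=5$. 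Hence lifting three pairs $(xv,vy)$ at $v$ — at each step choosing an $xv$-edge and a $vy$-edge bounding the triangular face at $v,x,y$, which keeps the graph planar — deletes six edges, creates three new $xy$-edges, and leaves $x$ and $y$ joined by exactly eight parallel edges, every other multiplicity remaining at most $7$ (as $G$ has no $8K_2$). I would then split on $d_G(v)$, which is at least $14$ by \Cref{lem:edge_connectivity_14}.

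If $d_G(v)=14$, I would apply \Cref{lem:14_lift} with $\alpha=3$: then $d_G(v)-\alpha=11$ and $d_G(v)-2\alpha=8$, and the graph $G''$ obtained from the lifted graph $G'$ by deleting $v$ has $\mu_{G''}(xy)=8$ and contains no $9K_2$; so \Cref{lem:14_lift} gives that $G''$ is $\mathcal{S}$-good and $G'/\mathcal{P}\notin\mathcal{N}$ for every partition $\mathcal{P}$ of $V(G')$, contradicting \Cref{thm:main_statement_SZ9_k8}\ref{thm:3_main_statement_SZ9_k8}.

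If $d_G(v)\ge 15$, I would instead let $G'$ be the graph obtained from $G$ by the three lifts followed by contracting the resulting $8K_2$ into a vertex $w_{xy}$; since $8K_2$ is $\mathcal{N}$-good, it is enough to prove that $G'$ is $\mathcal{S}$-good, contradicting \Cref{thm:main_statement_SZ9_k8}\ref{thm:2_main_statement_SZ9_k8}. Here $v(G')=v(G)-1\ge 4$, $e(G')=e(G)-11$ (three lifts cost three edges, then the $8K_2$ eight more), and $d_{G'}(v)=d_G(v)-6\ge 9$; this last inequality is precisely what fails when $d_G(v)=14$, which is why that case is handled separately. One gets $w_{G'}(\mathcal{P}'_0)=w_G(\mathcal{P}_0)+1\ge 1$, and $G'\notin\mathcal{N}\cup\mathcal{W}^*$ since $v(G')\ge 4$. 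For a nontrivial partition $\mathcal{P}'$ of $V(G')$ with corresponding partition $\mathcal{P}$ of $V(G)$, the vertices $x,y$ always lie in one part of $\mathcal{P}$, so $\mathcal{P}$ is nontrivial; moreover $w_{G'}(\mathcal{P}')=w_G(\mathcal{P})$ when $v$ shares the part of $w_{xy}$ (no lifted edge crosses $\mathcal{P}$, and that part contains $x,y,v$, so \Cref{lem:partition_type_value}\ref{lem:2_partition_type_value_3111} gives $w_G(\mathcal{P})\ge 16$), while $w_{G'}(\mathcal{P}')=w_G(\mathcal{P})-12$ when $v$ is separated from $w_{xy}$ (the six lifted $xv,vy$-edges then cross $\mathcal{P}$, each contributing $2$ to the weight). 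In the separated case, if the part of $\mathcal{P}'$ containing $v$ is $\{v\}$, then $G'/\mathcal{P}'$ is $\alpha K_2$ with $\alpha=d_G(v)-6\ge 9$, so $G'/\mathcal{P}'\notin\mathcal{N}\cup\mathcal{W}^*$; otherwise the parts of $\mathcal{P}$ force $\mathcal{P}$ into a type handled by \Cref{lem:partition_type_value}, and one needs $w_G(\mathcal{P})\ge 12$, which does not follow from the generic bound $w_G(\mathcal{P})\ge 9$ of \Cref{lem:partition_type_value}\ref{lem:1_partition_type_value_2111}. Here I would rerun the refinement step of the Gap Lemma on $H:=G[P_1]$, where $x,y\in P_1$: by \eqref{equ:refinement}, $w_H(\mathcal{Q})=w_G(\mathcal{Q}\cup\mathcal{P}\setminus P_1)-w_G(\mathcal{P})+19$ for every partition $\mathcal{Q}$ of $V(H)$, and using that $H$ contains $5K_2$ (as $\mu_G(xy)=5$), whose trivial partition has weight only $6$, together with \Cref{lem:partition_type_value}\ref{lem:1_partition_type_value_2111} applied to the refined partitions of $V(G)$, a short computation either forces $w_G(\mathcal{P})$ safely above $12$ or exhibits $H$ as an $\mathcal{N}$-good proper subgraph of $G$, which is impossible. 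This yields $w_{G'}(\mathcal{P}')\ge 0$, and combined with \Cref{ob:partition_bound_notin} (and the fact that two-part partition weights are even) it gives $G'/\mathcal{P}'\notin\mathcal{N}\cup\mathcal{W}^*$ except in a narrow residual case (roughly $v(G)=5$, $|\mathcal{P}'|=3$, and $G'/\mathcal{P}'$ a $T_{a,b,c}$ with $a+b+c=16$ and $\delta\ge 9$), which I would rule out by a short count of the edges incident to $v$ (using $\mu_G(xy)=5$ and the absence of $T^o_{1,1,7}$), exactly as in the proofs of \Cref{lem:14_lift} and \Cref{cla:T226o}.

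The step I expect to be the main obstacle is precisely this separated subcase of $d_G(v)\ge 15$: because a single lift-and-contract can drop the weight of a partition by as much as $12$, the Gap Lemma is not directly strong enough as a black box, so one must redo its refinement argument while exploiting the small value $\mu_G(xy)=5$, and then dispose of the leftover tight-$T_{a,b,c}$ configuration by hand. The remaining ingredients — deducing $\mu_G(xy)=5$, keeping the three lifts planar, and the $d_G(v)=14$ reduction via \Cref{lem:14_lift} — should be routine.
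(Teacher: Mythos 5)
Your skeleton is the paper's: lift the three $(xv,vy)$ pairs, contract the resulting $8K_2$, and aim to show the contracted graph $G'$ is $\mathcal{S}$-good against \Cref{thm:main_statement_SZ9_k8}\ref{thm:2_main_statement_SZ9_k8}, with the degenerate situation $d_G(v)=14$ diverted to \Cref{lem:14_lift} against \Cref{thm:main_statement_SZ9_k8}\ref{thm:3_main_statement_SZ9_k8} (the paper only discovers that situation mid-proof, as the lone way $G'/\mathcal{P}'=8K_2$ can arise; your upfront split on $d_G(v)$ is a harmless reorganization, and your deduction $\mu_G(xy)=5$ and the $d_G(v)=14$ branch are fine). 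The problems are in the branch $d_G(v)\ge 15$. First, your central inference --- ``$w_{G'}(\mathcal{P}')\ge 0$, combined with \Cref{ob:partition_bound_notin}, gives $G'/\mathcal{P}'\notin\mathcal{N}\cup\mathcal{W}^*$ except in one residual case'' --- is a non sequitur: \Cref{ob:partition_bound_notin} needs weight at least $4$, $6$, $11$ or $13$ (or $6$ plus $9$-edge-connectivity), so weight $\ge 0$ excludes nothing. Moreover the worry that drove you to a vague ``rerun the Gap Lemma with the $5K_2$'' detour is unfounded: in the separated case the part containing $w_{xy}$ always corresponds to a part of $G$ containing both $x$ and $y$, so for every nontrivial $\mathcal{P}'$ the corresponding $\mathcal{P}$ has type $(3^+,1^+,\ast)$ or $(2^+,2^+,\ast)$ and \Cref{lem:partition_type_value}\ref{lem:2_partition_type_value_3111},\ref{lem:3_partition_type_value_2211} already give $w_G(\mathcal{P})\ge 16$, hence $w_{G'}(\mathcal{P}')\ge 4$; the bound $9$ of part \ref{lem:1_partition_type_value_2111} never enters. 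What is actually needed, and missing from your write-up, is the case analysis that pushes this to an exclusion of $\mathcal{W}^*$: either the finer typing (e.g.\ a two-part separated $\mathcal{P}'$ with the $v$-side of size at least $2$ yields $\mathcal{P}$ of type $(3^+,2^+,\ast)$ or $(2^+,2^+,\ast)$, so $w_{G'}(\mathcal{P}')\ge 6$ or $13$), or, as the paper does, a proof that $G'$ is $9$-edge-connected away from the cut at $v$ (using \Cref{lem:edge_connectivity}\ref{lem:edge_cut_14}, $d_{G'}(w_{xy})\ge 12$ via $\mu_G(xy)=5$, and $d_{G'}(v)=d_G(v)-6$) so that \Cref{ob:partition_bound_notin}\ref{ob:5_10-edge-connected} applies. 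Your asserted ``short computation'' does not substitute for either.

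Second, the residual case you correctly isolate ($v(G)=5$, $|\mathcal{P}'|=3$, $G'/\mathcal{P}'=T_{a,b,c}$ with $a+b+c=16$, $\delta\ge 9$) cannot be killed by ``a short count of the edges incident to $v$'' with $\mu_G(xy)=5$ and no $T^o_{1,1,7}$: for instance $\mu(vx)=\mu(vy)=3$, $\mu(vv_1)=5$, $\mu(vv_2)=4$ is perfectly consistent with $d_G(v)=15$, and $T^o_{1,1,7}$ requires six parallel $xy$-edges so it is irrelevant here. The paper's argument is global, not local at $v$: from $w(G)\ge 0$ and $v(G)=5$ one gets $e(G)\ge 37$, and then $e(G)-e(T_{3,3,5})-e(G'/\mathcal{P}')\ge 37-11-16=10$ forces ten parallel edges between the extra vertex and $\{x,y\}$ (or between the two extra vertices, or between an extra vertex and $v$, depending on which part it lies in); in the subcase where these ten edges split between $x$ and $y$, it is the absence of $T^o_{2,2,6}$ (\Cref{cla:T226o}), not of $T^o_{1,1,7}$, that forces them all onto one vertex pair, yielding an $\mathcal{N}$-good $\alpha K_2$ with $\alpha\ge 10$ and contradicting \Cref{thm:main_statement_SZ9_k8}\ref{thm:1_main_statement_SZ9_k8}. (With the finer weight typing above, only the subcase where the extra vertex shares the part of $w_{xy}$ survives to this stage, so the count-plus-$T^o_{2,2,6}$ step is genuinely indispensable.) So the plan is the right one, but the two steps that carry the actual load --- excluding $\mathcal{N}\cup\mathcal{W}^*$ for separated partitions, and disposing of the tight $T_{a,b,c}$ configuration --- are not established by what you wrote.
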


\begin{proof}
	Suppose $G$ contains $T_{3,3,5}$ as a subgraph as shown in~\Cref{fig:T335}. 
    We lift three edge-pairs $(xv,vy)$ at $v$, contract the resulting $8K_2$, denote by $w_{xy}$ the new vertex 
    formed by contraction, and denote the resulting graph by $G'$. 
    Note that $v(G')=v(G)-1\geq 4$ and $e(G')=e(G)-11$. 
    For the trivial partition $\mathcal{P}'_0$ of $V(G')$, we have $w_{G'}(\mathcal{P}'_0)\geq w(G)-2\times 11+23=1$. 
    Hence, $G'/\mathcal{P}'_0\notin \mathcal{N}\cup\mathcal{W}^*$ since $|\mathcal{P}'_0|\geq 4$. 
    Let $\mathcal{P}'$ be a nontrivial partition of $V(G')$ and $\mathcal{P}$ be the corresponding partition of $V(G)$. 
    If $w_{xy}$ and $v$ are in the same part of $\mathcal{P}'$, then $\mathcal{P}$ has type $(3^+,1^+,\ast)$; thus 
    \Cref{lem:partition_type_value}\ref{lem:2_partition_type_value_3111} implies
    $w_{G'}(\mathcal{P}')=w_G(\mathcal{P})\geq 16$, so \Cref{ob:partition_bound_notin}\ref{ob:1_14_WN} gives
    $G'/\mathcal{P}'\notin\mathcal{N}\cup\mathcal{W}^*$.
    From now on, we assume that $w_{xy}$ and $v$ are in different parts of $\mathcal{P}'$. We consider the following two types of partitions.
 
    {\bf Case 1: $\bm{\mathcal{P}'}$ has type $\bm{(2,1,\ast)}$.} Now $|\mathcal{P}'|\geq 3$ since $v(G')\ge 4$; 
    so~\Cref{lem:partition_type_value}\ref{lem:2_partition_type_value_3111} and \ref{lem:3_partition_type_value_2211} 
    give $w_{G'}(\mathcal{P}')\geq w_G(\mathcal{P})-2\times 6\geq\min\{16,18\}-12=4$.
    Thus $G'/\mathcal{P}'\notin \mathcal{N}\cup\mathcal{W}^*\backslash\{T_{a,b,c}:a+b+c=16\}$. 
    Suppose $\mathcal{P}'=\{P_1, P_2, P_3\}$ with $|P_1|=2$ and $ |P_2|=|P_3|=1$, and $G'/\mathcal{P}'=\{T_{a,b,c}:a+b+c=16\}$. 
    Denote $V(G')$ by $\{w_{xy},v,v_1,v_2\}$. Note that $v(G)=5$ and $0\le w(G)\le 2e(G)-5\times 23+42$ gives 
    $e(G)\geq \lceil73/2\rceil = 37$. 
    If $\mathcal{P}'=\{\{w_{xy}\},\{v,v_1\},\{v_2\}\}$, then $\mu_{G}(v_1v)=e(G)-e(G'/\mathcal{P}')-e(T_{3,3,5})\geq 37-16-11 = 10$. 
    So $G$ has an $\mathcal{N}$-good subgraph $\alpha K_2$ with $\alpha\geq 10$, contradicting~\Cref{thm:main_statement_SZ9_k8}\ref{thm:1_main_statement_SZ9_k8}. 
    Similarly, in the case $\mathcal{P}'=\{\{w_{xy}\},\{v\},\{v_1,v_2\}\}$ we get $\mu_{G}(v_1v_2)\ge 10$, a contradiction. In the remaining case $\mathcal{P}'=\{\{w_{xy},v_1\},\{v\},\{v_2\}\}$. Now $\mu_{G}(v_1x)+\mu_{G}(v_1y)
    = e(G)-e(G'/\mathcal{P}')-e(T_{3,3,5})\geq 10$. 
    Noting that $\mu_G(vx)\geq 3$ and $\mu_G(vy)\geq 3$, since $G$ contains no $T^o_{2,2,6}$, the vertex $v_1$ cannot be adjacent to both $x$ and $y$; that is, either $\mu_{G}(v_1x)=0$ 
    or $\mu_{G}(v_1y)=0$.  Thus $G$ again contains an $\mathcal{N}$-good subgraph $\alpha K_2$ with $\alpha\geq 10$ of $G$,  
    again contradicting~\Cref{thm:main_statement_SZ9_k8}\ref{thm:1_main_statement_SZ9_k8}.

    {\bf Case 2: $\mathcal{P}'$ has type $\bm{({3^+},{1^+},*)}$ or type $\bm{({2^+},{2^+},*)}$.} 
    Now $\mathcal{P}$ has either type $(4^+,1^+,\ast)$ or type $(3^+,2^+,\ast)$. 
    So~\Cref{lem:partition_type_value}\ref{lem:4_partition_type_value_4111} and \ref{lem:5_partition_type_value_3211} 
    imply $w_{G'}(\mathcal{P}')\geq w_G(\mathcal{P})-2\times 6\geq \min \{20,25\}-12=8$.
    We now bound the edge-connectivity of $G'$. By~\Cref{lem:edge_connectivity_14}, $d_{G'}(v)=d_{G}(v)-6\geq 14-6=8$. 
    Since $G$ contains no $T_{2,2,6}$, we know $\mu_{G}(xy)=5$; hence, $d_{G'}(w_{xy})=d_{G}(x)+d_{G}(y)-2\times\mu_{G}(xy)-6\geq 
    14+14-2\times5-6=12$. And $d_{G'}(u)=d_{G}(u)\geq 14$ for every vertex $u\in V(G')\backslash \{v,w_{xy}\}$. 
    If $[X, X^c]$ is an edge-cut of $G'$ with $|X|\ge2$ and $|X^c|\ge2$, then~\Cref{lem:edge_connectivity}\ref{lem:edge_cut_14} 
    implies $d_{G'}(X)\ge d_G(X)-6\geq 15-6=9$. Thus $d_{G'}(X)\geq 9$ unless $\{X,X^c\}=\{\{v\},V(G')\backslash \{v\}\}$. 
    So~\Cref{ob:partition_bound_notin}\ref{ob:5_10-edge-connected} gives $G'/\mathcal{P}'\notin\mathcal{N}
    \cup\mathcal{W}^*\backslash \{8K_2\}$ and $G'/\mathcal{P}'=8K_2$ only if $\mathcal{P}'=\{\{v\},V(G')\setminus\{v\}\}$. 
    In this exceptional case, $d_{G}(v)=14$. Now we instead lift three edge-pairs $(xv, vy)$ to obtain a new graph $G_1$, 
    delete vertex $v$, and denote the resulting graph by $G_2$. Note that $\mu_{G_2}(u_1u_2)\leq 8$ for every two vertices $u_1,u_2\in V(G_2)$, i.e., $G_2$ contains no $9K_2$. By~\Cref{lem:14_lift}, $G_2$ is $\mathcal{S}$-good and $G_1/\mathcal{P}\notin\mathcal{N}$ for every partition $\mathcal{P}$ of $V(G_1)$. This contradicts~\Cref{thm:main_statement_SZ9_k8}\ref{thm:3_main_statement_SZ9_k8}.

	Therefore, $w(G')\geq 1$ and $G'/\mathcal{P}'\notin\mathcal{N}\cup\mathcal{W}^*$ for every partition $\mathcal{P}'$ of $V(G')$. So $G'$ is $\mathcal{S}$-good, contradicting~\Cref{thm:main_statement_SZ9_k8}\ref{thm:2_main_statement_SZ9_k8}. 
\end{proof}

\begin{figure}[!htbp]
	\centering
	\begin{subfigure}[t]{.32\textwidth}
		\centering
		\begin{tikzpicture}[scale=.56]			
			\draw [bend left=4, line width=0.2mm, black] (-2,-2) to (2,-2);
			\draw [bend right=4, line width=0.2mm, black] (-2,-2) to (2,-2);
			\draw [bend left=12, line width=0.2mm, black] (-2,-2) to (2,-2);
			\draw [bend right=12, line width=0.2mm, black] (-2,-2) to (2,-2);
			\draw [bend left=20, line width=0.2mm, black] (-2,-2) to (2,-2);
			\draw [bend right=20, line width=0.2mm, black]  (-2,-2) to (2,-2);

			\draw [bend left=4, line width=0.2mm, black] (-2,-2) to (-2,-6);
			\draw [bend right=4, line width=0.2mm, black] (-2,-2) to (-2,-6);
			\draw [bend left=12, line width=0.2mm, black] (-2,-2) to (-2,-6);
			\draw [bend right=12, line width=0.2mm, black] (-2,-2) to (-2,-6);  
			\draw [bend left=20, line width=0.2mm, black] (-2,-2) to (-2,-6);
			\draw [bend right=20, line width=0.2mm, black] (-2,-2) to (-2,-6);

			\draw [bend left=4, line width=0.2mm, black] (-2,-6) to (2,-6);
			\draw [bend right=4, line width=0.2mm, black] (-2,-6)  to (2,-6);
			\draw [bend left=12, line width=0.2mm, black] (-2,-6)  to (2,-6);
			\draw [bend right=12, line width=0.2mm, black] (-2,-6)  to (2,-6);  
			\draw [bend left=20, line width=0.2mm, black] (-2,-6)  to (2,-6);
			\draw [bend right=20, line width=0.2mm, black] (-2,-6)  to (2,-6);

			\draw [bend left=4, line width=0.2mm, black] (2,-2) to (2,-6);
			\draw [bend right=4, line width=0.2mm, black] (2,-2)  to (2,-6);
			\draw [bend left=12, line width=0.2mm, black] (2,-2)  to (2,-6);
			\draw [bend right=12, line width=0.2mm, black] (2,-2)  to (2,-6);
			\draw [bend left=20, line width=0.2mm, black] (2,-2)  to (2,-6);
			\draw [bend right=20, line width=0.2mm, black] (2,-2)  to (2,-6);
			
			\draw [line width=0.2mm, black] (-2,2) to (2,2);
			\draw [line width=0.2mm, black] (2,2) to (2,-2);
			\draw [line width=0.2mm, black] (-2,2) to (-2,-2);

   		    \draw [fill=black,line width=0.2pt] (-2,-2) node[left=0.5mm] {$x$} circle (4pt); 
            \draw [fill=black,line width=0.2pt] (2,-2) node[right=0.5mm] {$y$} circle (4pt); 
            \draw [fill=white,line width=0.8pt] (2,-6) node[right=0.5mm] {$v$} circle (3.5pt); 
            \draw [fill=white,line width=0.8pt] (-2,-6) node[left=0.5mm] {$u$} circle (3.5pt); 
            \draw [fill=white,line width=0.8pt] (-2,2) node[left=0.5mm] {$v_1$} circle (3.5pt); 
            \draw [fill=white,line width=0.8pt] (2,2) node[right=0.5mm] {$v_2$} circle (3.5pt); 
		\end{tikzpicture}
		\caption{$Q^{o}_{6,6,6,7}$}
		\label{fig:Q6667o}	
	\end{subfigure}
	\begin{subfigure}[t]{.32\textwidth}
		\centering
		\begin{tikzpicture}[scale=.56]		
			\draw [line width=0.2mm, black] (-2,1)  to (2,1);
			\draw [line width=0.2mm, black] (-2,1)  to (2,1);
			\draw [bend left=10, line width=0.2mm, black] (-2,1) to (2,1);
			\draw [bend right=10, line width=0.2mm, black] (-2,1) to (2,1);
			\draw [bend left=18, line width=0.2mm, black] (-2,1) to (2,1);
			\draw [bend right=18, line width=0.2mm, black] (-2,1) to (2,1);			
			
			\draw [bend left=12, line width=0.2mm, black] (-2,1) to (0,5);
			\draw [bend right=12, line width=0.2mm, black] (-2,1) to (0,5);
			
			\draw [bend left=4, line width=0.2mm, black] (-2,1) to (-2,-3);
			\draw [bend right=4, line width=0.2mm, black] (-2,1) to (-2,-3);
			\draw [bend left=12, line width=0.2mm, black] (-2,1) to (-2,-3);
			\draw [bend right=12, line width=0.2mm, black] (-2,1) to (-2,-3);
			\draw [bend left=20, line width=0.2mm, black] (-2,1) to (-2,-3);
			\draw [bend right=20, line width=0.2mm, black] (-2,1) to (-2,-3);
			
			\draw [bend left=12, line width=0.2mm, black] (2,1) to (0,5);
			\draw [bend right=12, line width=0.2mm, black] (2,1) to (0,5);
			
			\draw [bend left=4, line width=0.2mm, black] (2,-3) to (-2,-3);
			\draw [bend right=4, line width=0.2mm, black] (2,-3) to (-2,-3);
			\draw [bend left=12, line width=0.2mm, black] (2,-3) to (-2,-3);
			\draw [bend right=12, line width=0.2mm, black] (2,-3) to (-2,-3);
			\draw [bend left=20, line width=0.2mm, black] (2,-3) to (-2,-3);
			\draw [bend right=20, line width=0.2mm, black] (2,-3) to (-2,-3);
			
			\draw [bend left=4, line width=0.2mm, black] (2,-3) to (2,1);
			\draw [bend right=4, line width=0.2mm, black] (2,-3) to (2,1);
			\draw [bend left=12, line width=0.2mm, black] (2,-3) to (2,1);
			\draw [bend right=12, line width=0.2mm, black] (2,-3) to (2,1);
			\draw [bend left=20, line width=0.2mm, black] (2,-3) to (2,1);
			\draw [bend right=20, line width=0.2mm, black] (2,-3) to (2,1);
            
            \draw [fill=white,line width=0.8pt] (0,5) node[above] {$z$} circle (3.5pt); 
            \draw [fill=black,line width=0.8pt] (-2,1) node[left=0.5mm] {$x$} circle (4pt); 
            \draw [fill=black,line width=0.8pt] (2,1) node[right=0.5mm] {$y$} circle (4pt); 
            \draw [fill=white,line width=0.8pt] (-2,-3) node[left=0.5mm] {$u$} circle (3.5pt); 
            \draw [fill=white,line width=0.8pt] (2,-3) node[right=0.5mm] {$v$} circle (3.5pt); 
		\end{tikzpicture}
		\caption{$Q_{6,6,6,7}^{oo}$}
		\label{fig:Q6667_oo}		
	\end{subfigure}
	\begin{subfigure}[t]{.32\textwidth}
		\centering
		\begin{tikzpicture}[circle dotted/.style={dash pattern=on 0.2mm off 1mm, line cap=round}, scale=.56]		
			
			\draw [bend left=8, line width=0.2mm, black] (0,3) to (2,0);
			\draw [bend right=8, line width=0.2mm, black] (0,3) to (2,0);
			\draw [bend left=20, line width=0.2mm, black] (0,3) to (2,0);
			\draw [bend right=20, line width=0.2mm, black] (0,3) to (2,0);
			
			\draw [bend left=8, line width=0.2mm, black] (0,3) to (-2,0);
			\draw [bend right=8, line width=0.2mm, black] (0,3) to (-2,0);
			\draw [bend left=20, line width=0.2mm, black] (0,3) to (-2,0);
			\draw [bend right=20, line width=0.2mm, black] (0,3) to (-2,0);
			
			\draw [line width=0.2mm, black] (0,3) to (-4,3.5) to (-2,0);	
			
			\draw [bend left=8, line width=0.2mm, black] (0,3) to (4,3.5) ; 
			\draw [bend right=8, line width=0.2mm, black] (0,3) to (4,3.5) ;
			
			\draw [bend left=8, line width=0.2mm, black] (2,0) to (-2,0); 
			\draw [bend right=8, line width=0.2mm, black] (2,0) to (-2,0);
			\draw [bend left=20, line width=0.2mm, black] (2,0) to (-2,0);
			\draw [bend right=20, line width=0.2mm, black] (2,0) to (-2,0);
			
			\draw [bend left=10, line width=0.2mm, black] (2,0) to (4,3.5) ; 
			\draw [bend right=10, line width=0.2mm, black] (2,0) to (4,3.5) ;
			
			\draw [line width=0.2mm, black] (-2,0) to (0,-3) to (2,0); 

            \draw [fill=white,line width=0.8pt] (-4,3.5) node[above] {$v_1$} circle (3.5pt);    
            \draw [fill=white,line width=0.8pt] (4,3.5) node[above] {$v_2$} circle (3.5pt);    
            \draw [fill=white,line width=0.8pt] (0,-3) node[right=0.5mm] {$v_3$} circle (3.5pt); 
            \draw [fill=black,line width=0.4pt] (0,3) node[above] {$x$} circle (4pt);  
            \draw [fill=black,line width=0.2pt] (2,0) node[right=0.5mm] {$y$} circle (4pt); 
            \draw [fill=black,line width=0.2pt] (-2,0) node[left=0.5mm] {$z$} circle (4pt); 
		\end{tikzpicture}
		\caption{$F$}
		\label{fig:F}		
	\end{subfigure}
	\caption{\small The graphs $Q_{6,6,6,7}^o$ and $Q_{6,6,6,7}^{oo}$ in~\Cref{cla:Q6667o}, and $F$ in~\Cref{cla:F1}}
	\label{fig:Q6667o_Q6667oo_F}
\end{figure}
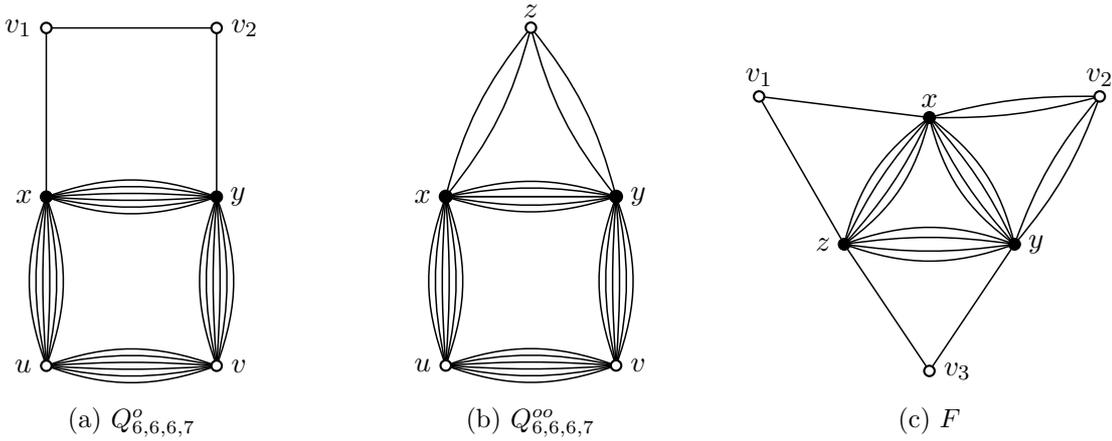

\begin{lemma}\label{cla:Q6667o}
	$G$ contains neither $Q^{o}_{6,6,6,7}$ nor $Q^{oo}_{6,6,6,7}$.
\end{lemma}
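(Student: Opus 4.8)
The plan is to repeat the now-standard reduction: if $G$ contains one of these configurations, perform a short lifting that exposes an $\mathcal{N}$-good $4$-vertex subgraph on the quadrilateral vertex set $S:=\{x,y,u,v\}$ (a supergraph of $6C_4^+$), contract it, and verify that the quotient is $\mathcal{S}$-good, contradicting \Cref{thm:main_statement_SZ9_k8}\ref{thm:2_main_statement_SZ9_k8}. Both configurations are a $6$-fold $4$-cycle on $S$ together with a low-degree bridge from $x$ to $y$: for $Q^o_{6,6,6,7}$ I would lift the edge-triple $(xv_1,v_1v_2,v_2y)$ along the length-$3$ bridge, and for $Q^{oo}_{6,6,6,7}$ I would lift the two edge-pairs $(xz,zy)$ at the bridge vertex $z$. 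In either case \Cref{cla:lift_edge_pairs}, applied with $(X,Y,Z)=(0,1,0)$ or $(2,0,0)$, guarantees that the resulting graph $G'$ contains no $8K_2$; hence $\mu_{G'}(xy)=7$ exactly, $\mu(G')\le 7$ throughout, and (since the lift costs each of $v_1,v_2$, or $z$, only two edges, and $\delta(G)\ge 14$ by \Cref{lem:edge_connectivity_14}) $G'$ is still at least $10$-edge-connected.

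Next I would set $H:=G'[S]$. The first step is the dichotomy $e_G(S)=24$: because $G$ has no $8K_2$ we have $\mu(G)\le 7$, so if $e_G(S)\ge 25$ then $G[S]$ is a $4$-vertex graph with at least $25$ edges, $\delta\ge 12$, and $\mu\le 7$, hence $\mathcal{N}$-good — exactly the check already made for $6C_4^+$ using \Cref{ob:weight_small_graphs} and the cases of \Cref{ob:partition_bound_notin} — and it is a proper subgraph, since $v(G)\ge 6$ for $Q^o_{6,6,6,7}$ and $v(G)\ge 5$ for $Q^{oo}_{6,6,6,7}$; this contradicts \Cref{thm:main_statement_SZ9_k8}\ref{thm:1_main_statement_SZ9_k8}. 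So $e_G(S)=24$, and then $H$ contains $6C_4^+$, is $\mathcal{N}$-good, and is connected. Contracting $H$ to a vertex $w^\ast$ gives $v(G'/H)=v(G)-3\ge 2$, and a careful edge count gives $e(G'/H)=e(G)-e_G(S)-3=e(G)-27$, so the trivial partition satisfies $w_{G'/H}(\mathcal{P}_0)=w_G(\mathcal{P}_0)+15\ge 15$.

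For a nontrivial partition $\mathcal{P}'$ of $V(G'/H)$, I would pass to the corresponding partition $\mathcal{P}$ of $V(G)$, whose block containing $w^\ast$ is $S$; then $\mathcal{P}$ has type $(5^+,1^+,*)$ or $(4^+,2^+,*)$, so the Gap Lemma (\Cref{lem:partition_type_value}\ref{lem:4_partition_type_value_4111} and \ref{lem:5_partition_type_value_3211}) gives $w_G(\mathcal{P})\ge 20$. Only the three lifted edges can be counted by $w_G(\mathcal{P})$ but not by $w_{G'/H}(\mathcal{P}')$, so $w_{G'/H}(\mathcal{P}')\ge 20-6=14\ge 6$; since $G'/H$ is at least $9$-edge-connected, \Cref{ob:partition_bound_notin}\ref{ob:5_10-edge-connected} gives $(G'/H)/\mathcal{P}'\notin\mathcal{N}\cup\mathcal{W}^*$. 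Thus $G'/H$ is $\mathcal{S}$-good, the desired contradiction. When $v(G'/H)=2$ — possible only for $Q^{oo}_{6,6,6,7}$ with $v(G)=5$ — the quotient is $\beta K_2$ with $\beta=d_{G'}(z)\ge 10$, which is $\mathcal{S}$-good, so that case is immediate.

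The only delicate point is the edge-bookkeeping across lift-and-contract: pinning down the additive constants $+15$ and $-6$, and justifying the $e_G(S)=24$ dichotomy (which rests on $G$ having neither an $8K_2$ nor an $\mathcal{N}$-good proper subgraph). Should any boundary partition nonetheless survive in a small case, it would be handled exactly as in the proofs of \Cref{cla:T226o} and \Cref{cla:T335}: such a partition forces two vertices of the configuration to be joined by at least $10$ parallel edges, yielding an $\mathcal{N}$-good $\alpha K_2$ that contradicts \Cref{thm:main_statement_SZ9_k8}\ref{thm:1_main_statement_SZ9_k8}, or else one re-runs the reduction by lifting at a small-degree vertex and applies \Cref{lem:14_lift} to contradict \Cref{thm:main_statement_SZ9_k8}\ref{thm:3_main_statement_SZ9_k8}.
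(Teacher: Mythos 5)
Your proposal follows essentially the same route as the paper's proof: lift along the bridge (the triple $(xv_1,v_1v_2,v_2y)$ for $Q^{o}_{6,6,6,7}$, the two pairs $(xz,zy)$ for $Q^{oo}_{6,6,6,7}$) so that $\{x,y,u,v\}$ carries a $6C_4^+$, contract it, and verify the quotient is $\mathcal{S}$-good via the trivial-partition bound $w(G)+15\ge 15$, the Gap Lemma (\Cref{lem:partition_type_value}\ref{lem:4_partition_type_value_4111}, \ref{lem:5_partition_type_value_3211}) giving $w_G(\mathcal{P})\ge 20$ for the corresponding $(5^+,1^+,*)$ or $(4^+,2^+,*)$ partitions, and $9$-/$10$-edge-connectivity together with \Cref{ob:partition_bound_notin}\ref{ob:5_10-edge-connected}, contradicting \Cref{thm:main_statement_SZ9_k8}\ref{thm:2_main_statement_SZ9_k8}. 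The one genuine difference is how you control the edges inside $S=\{x,y,u,v\}$: the paper rules out the diagonals $xv,uy$ because they would create $T^{o}_{2,2,6}$ (\Cref{cla:T226o}), whereas you contract $G[S]$ itself when $e_G(S)\ge 25$ and appeal to \Cref{thm:main_statement_SZ9_k8}\ref{thm:1_main_statement_SZ9_k8}; both are fine. Do note a bookkeeping slip in the $Q^{oo}$ case: there the side $xy$ has multiplicity $5$, not $6$ (if it were $6$, lifting the two pairs at $z$ would already produce an $8K_2$ and \Cref{cla:lift_edge_pairs} would finish the case immediately, contrary to your own claim that $\mu_{G'}(xy)=7$), so $e_G(S)=23$, four edges are lifted, and the correct counts are $e(G')=e(G)-e_G(S)-4=e(G)-27$ and a loss of $2\times 4$ in the nontrivial-partition estimate, i.e.\ $w\ge 20-8=12$; your stated ``$e_G(S)=24$'', ``$-3$'', and ``three lifted edges'' are off for this configuration, but the two errors cancel, the final bounds ($\ge 15$ and $\ge 12\ge 6$) coincide with the paper's, and nothing in the argument breaks.
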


\begin{proof}
	Suppose to the contrary that $G$ contains $Q^{o}_{6,6,6,7}$ or $Q^{oo}_{6,6,6,7}$ as shown in~\Cref{fig:Q6667o} or~\ref{fig:Q6667_oo}. The proofs for the two configurations are quite similar. We lift some edges in $G$ to form the subgraph $6C_4^+$, 
    which is $\mathcal{N}$-good, contract this $6C_4^+$, and verify that the resulting graph is $\mathcal{S}$-good. 
    This contradicts~\Cref{thm:main_statement_SZ9_k8}\ref{thm:2_main_statement_SZ9_k8}. 
    Here we only give the proof for $Q^{oo}_{6,6,6,7}$.
 
	We lift two edge-pairs $(xz,zy)$ at vertex $z$, then contract the resulting $6C_4^+$ into a new vertex $w_{xyvu}$, and denote the resulting graph by $G'$. 
    Note that $v(G')=v(G)-3\geq 2$. 
    And $\mu(xv)=\mu(uy)=0$, since otherwise $G$ contains $T^o_{2,2,6}$, contradicting 
    \Cref{cla:T226o}. Hence, $e(G')=e(G)-27$. Furthermore, as we only lifted 4 edges, 
    \Cref{lem:edge_connectivity_14} implies that $G'$ has edge-connectivity at least $14-4=10$.
    
    For the trivial partition $\mathcal{P}'_0$ of $V(G')$, we have $w_{G'}(\mathcal{P}'_0)\geq w(G)-2\times 27+23\times 3\geq 15$ so \Cref{ob:partition_bound_notin}\ref{ob:1_14_WN} implies $G'/\mathcal{P}'_0\notin\mathcal{N}\cup\mathcal{W}^*$. 
    Let $\mathcal{P}'$ be a type $(2^+,1^+,\ast)$ partition of $V(G')$ and $\mathcal{P}$ be the corresponding partition of $V(G)$. 
    Note that $\mathcal{P}$ has type either $(5^+,1^+,\ast)$ or $(4^+,2^+,\ast)$; 
    by~\Cref{lem:partition_type_value}\ref{lem:4_partition_type_value_4111} we get $w_{G}(\mathcal{P})\geq 20$. 
    Thus, $w_{G'}(\mathcal{P}')\geq w_G(\mathcal{P})-2\times 4\geq 12$. 
    Since $G'$ is $10$-edge-connected, \Cref{ob:partition_bound_notin}\ref{ob:5_10-edge-connected} gives 
    $G'/\mathcal{P}'\notin\mathcal{N}\cup\mathcal{W}^*$. In summary, $w(G')\geq 12$; so $G'$ is $\mathcal{S}$-good, contradicting~\Cref{thm:main_statement_SZ9_k8}\ref{thm:2_main_statement_SZ9_k8}.
\end{proof}

\begin{lemma}\label{cla:F1}
	$G$ contains no copy of $F$.
\end{lemma}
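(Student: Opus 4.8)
The plan is to follow the established template of the preceding lemmas (\Cref{cla:T335}, \Cref{cla:Q6667o}): locate $F$ inside $G$, perform a carefully chosen sequence of liftings to create a recognizable $\mathcal{N}$-good subgraph (either an $8K_2$ or a $6C_4^+$), contract it, and then verify that the quotient graph $G'$ is $\mathcal{S}$-good, contradicting \Cref{thm:main_statement_SZ9_k8}\ref{thm:2_main_statement_SZ9_k8}. Looking at the picture of $F$ (\Cref{fig:F}): the vertices $x,y,z$ carry high-multiplicity edges between them ($\mu(xy),\mu(xz),\mu(yz)\ge 4$), while $v_1,v_2,v_3$ are low-degree "outer" vertices, each joined to two of $\{x,y,z\}$ (with $v_1$ joined to $x,z$ by edges of multiplicity $1$ and $2$; $v_2$ joined to $x,y$; $v_3$ joined to $y,z$ by single edges). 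The natural move is to lift edge-pairs at $v_1,v_2,v_3$ to absorb those outer vertices, simultaneously building up the multiplicities among $x,y,z$ until we reach an $8K_2$ (e.g.\ after suitable lifts $\mu(xy)$ or $\mu(yz)$ reaches $8$), then contract that $8K_2$ into a new vertex $w$. One must check, as in the earlier proofs, that the total number of edges lifted/lost is small enough: here $v(F)=6$ and $e(F)=4+4+4+1+2+1+1+1+1 \le $ (a constant around $19$), so $v(G')=v(G)-3\ge 2$ and $e(G')=e(G)-e(F)$, roughly $e(G)-19$, which is comfortably better than the $e(G)-27$ budget used in \Cref{cla:Q6667o}.

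First I would verify, using earlier forbidden-configuration lemmas, that no unwanted high-multiplicity edges are hiding in $F$'s neighborhood — in particular that $\mu(xv_3)$, $\mu(yv_1)$, etc.\ are forced to be $0$, exactly as $\mu(xv)=\mu(uy)=0$ was forced in \Cref{cla:Q6667o} via \Cref{cla:T226o}; this pins down $e(F)$ precisely. Next I would use \Cref{lem:edge_connectivity_14} to record that after lifting at most some small number $\ell$ of edges, the resulting graph (and hence $G'$) is $(14-\ell)$-edge-connected, which will be at least $9$ or $10$. Then the weight bookkeeping: for the trivial partition $\mathcal{P}_0'$ of $V(G')$ we get $w_{G'}(\mathcal{P}_0')\ge w(G)-2\,e(F)+23\cdot 3 \ge 0 + \text{(positive constant)}$, and since $|\mathcal{P}_0'|=v(G')$ — if that is $\ge 4$ we invoke \Cref{ob:partition_bound_notin}\ref{ob:1_14_WN}, and if $v(G')\le 3$ we must argue directly using \Cref{ob:weight_small_graphs} and $\delta(G')$ that $G'\notin\mathcal{N}\cup\mathcal{W}^*$. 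For a nontrivial partition $\mathcal{P}'$ of $V(G')$ with corresponding partition $\mathcal{P}$ of $V(G)$: since contracting $F$ collapses $4$ vertices to one, any $(2^+,1^+,\ast)$-partition $\mathcal{P}'$ lifts to a $(5^+,1^+,\ast)$- or $(4^+,2^+,\ast)$-partition $\mathcal{P}$, so \Cref{lem:partition_type_value}\ref{lem:4_partition_type_value_4111} gives $w_G(\mathcal{P})\ge 20$, hence $w_{G'}(\mathcal{P}')\ge 20 - 2\ell \ge 12$; then the high edge-connectivity of $G'$ lets \Cref{ob:partition_bound_notin}\ref{ob:5_10-edge-connected} finish. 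Concluding $w(G')\ge 0$ and $G'/\mathcal{P}'\notin\mathcal{N}\cup\mathcal{W}^*$ for all $\mathcal{P}'$ shows $G'$ is $\mathcal{S}$-good, the desired contradiction.

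The main obstacle I anticipate is getting the liftings in $F$ to produce a genuine copy of $8K_2$ (or $6C_4^+$) while respecting planarity — the lifts must be of edge-pairs (or triples/quadruples) that lie consecutively along a face boundary. The edges $xv_1$, $v_1z$ of multiplicities $1,2$ and the single edges $xv_2,v_2y,yv_3,v_3z$ have to be arranged so that lifting them successively both eliminates $v_1,v_2,v_3$ and deposits the right number of new parallel edges into one of the pairs among $x,y,z$; one has to count carefully that, say, $\mu(yz)$ (or $\mu(xy)$) after all lifts is exactly $8$, not $7$ or $9$. A secondary subtlety, as in \Cref{cla:T335}, is the small-order exceptional case where $v(G')$ is $2$ or $3$ and $G'/\mathcal{P}'$ could land in $\{T_{a,b,c}: a+b+c\le 16\}$ or $\mathcal{W}^*$; there one needs an extra argument — typically that the forced $\delta(G')\ge 9$ (from $14$-edge-connectivity minus the few lifted edges, together with absence of $8K_2$) rules out these small exceptional graphs, or else exhibits an even larger $\mathcal{N}$-good subgraph of $G$ directly, contradicting \ref{thm:1_main_statement_SZ9_k8}.
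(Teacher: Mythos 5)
There is a genuine gap at the heart of your plan: the $\mathcal{N}$-good subgraph you intend to create by lifting cannot actually be produced from $F$. In $F$ the central multiplicities are $\mu(xy)=\mu(xz)=\mu(yz)=4$ (indeed they cannot exceed $4$, since any $5$ would give a $T_{3,3,5}$, forbidden by \Cref{cla:T335}), and the only lifts available inside $F$ add at most $2$ parallel edges to $xy$ (from the two pairs at $v_2$) and at most $1$ each to $xz$ and $yz$ (from $v_1$ and $v_3$). So after all possible lifts the multiplicities are at most $6,5,5$; no pair among $x,y,z$ can ever reach multiplicity $8$, and there is no fourth vertex with high multiplicity to the triangle, so neither an $8K_2$ nor a $6C_4^+$ can be formed. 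The step you flag as "one has to count carefully that $\mu(yz)$ (or $\mu(xy)$) after all lifts is exactly $8$" is precisely where the argument fails. The paper instead lifts one pair at $v_1$, two at $v_2$, and one at $v_3$ to turn the central triangle into $T_{5,5,6}$, which lies in $\mathcal{W}^*$ and is therefore $\mathcal{N}$-good by \Cref{ob:W*_N_good}; this is the contractible configuration, and it is a $3$-vertex (not $2$- or $4$-vertex) contraction, so $v(G')=v(G)-2\ge 4$, $e(G')=e(G)-20$, and the corresponding partitions have type $(4^+,1^+,\ast)$ or $(3^+,2^+,\ast)$, not $(5^+,1^+,\ast)$/$(4^+,2^+,\ast)$ as in your bookkeeping (which is calibrated to the unavailable $6C_4^+$ contraction). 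Your edge count $e(F)\approx 19$ is also off ($e(F)=20$).

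A secondary inaccuracy: even with the correct contraction, not every nontrivial partition case can be closed by edge-connectivity as you propose. When $v_1$, $v_2$, $v_3$, and the contracted vertex lie in four distinct parts, all $8$ lifted edges may be lost from the cut count and one only gets $w_{G'}(\mathcal{P}')\ge 20-16=4$, which is below the threshold of \Cref{ob:partition_bound_notin}\ref{ob:5_10-edge-connected}; the paper finishes this case by observing $|\mathcal{P}'|\ge 4$ instead. Also, no auxiliary step forcing $\mu(xv_3)=\mu(yv_1)=0$ is needed here (unlike \Cref{cla:Q6667o}); what matters is only that the three $v_i$ are distinct, giving $v(G)\ge 6$.
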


\begin{proof}
	Suppose to the contrary that $G$ contains a subgraph $F$ on six vertices, as shown in~\Cref{fig:F}. Note that in $F$, all three $v_i$'s are distinct and thus $v(G)\geq 6$.
    We lift an edge-pair $(xv_1,v_1z)$, two edge-pairs $(xv_2,v_2y)$, and an edge-pair $(yv_3,v_3z)$,  to form the subgraph $T_{5,5,6}$ which is $\mathcal{N}$-good (by \Cref{ob:W*_N_good}). Next, we contract $T_{5,5,6}$ into a new vertex $w_{xyz}$ and denote the resulting graph by $G'$. Note that $v(G')\geq 4$ and $e(G')=e(G)-20$. 
    \Cref{lem:edge_connectivity_14} gives $d_{G'}(v)\geq d_{G}(v)-4\geq 10$ for every $v\in V(G')\setminus\{w_{xyz}\}$ and~\Cref{lem:edge_connectivity}\ref{lem:edge_cut_18}  implies that $d_{G'}(w_{xyz})\ge 18-8=10$.
    Now consider an edge cut $[X',X'^c]$ of $G'$ with $|X'|\ge 2$ and $|X'^c|\ge 2$.  By symmetry, we assume $w_{xyz}\in X'$.  If $v_1,v_2,v_3\in X'^c$, then for the corresponding edge-cut $[X,X^c]$ in $G$ we have $|X|\ge 3$ and $|X^c|\ge 3$, so 
    \Cref{lem:edge_connectivity}\ref{lem:edge_cut_18} gives $e_{G'}(X',X'^c) \ge e_G(X,X^c)-8\ge 18-8=10$.
    Otherwise, at most 6 of the edges that we lifted are counted in $e_G(X,X^c)$ but not in $e_{G'}(X',X'^c)$, so \Cref{lem:edge_connectivity}\ref{lem:edge_cut_14} gives $e_{G'}(X',X'^c)\ge e_G(X,X^c)-6\ge 15-6=9$.
    Thus $G'$ is $9$-edge-connected.  
    
    For the trivial partition $\mathcal{P}_0'$ of $V(G')$, we have $w_{G'}(\mathcal{P}_0')\geq w(G)-2\times 20+23\times 2\geq6$. 
    Since $|\mathcal{P}'_0|=v(G')\geq 4$, clearly $G'/\mathcal{P}'_0\notin\mathcal{N}\cup\mathcal{W}^*$. 
    Given a type $(2^+,1^+,\ast)$ partition $\mathcal{P}'$ of $V(G')$, denote the corresponding partition of $V(G)$ 
    by $\mathcal{P}$. If at least one $v_i$ is in the same part of $\mathcal{P}'$ as $w_{xyz}$, then $\mathcal{P}$ has type $(4^+,1^+,\ast)$, so \Cref{lem:partition_type_value}\ref{lem:4_partition_type_value_4111} gives $w_{G'}(\mathcal{P}')\geq w_G(\mathcal{P})-2\times \max\{4,6\}\geq 20-12=8$. 
    If two $v_i$'s are in the same part of $\mathcal{P}'$, different from the part of $w_{xyz}$, then $\mathcal{P}$ has type $(3^+,2^+,\ast)$, so \Cref{lem:partition_type_value}\ref{lem:5_partition_type_value_3211} gives $w_{G'}(\mathcal{P}')\ge w_G(\mathcal{P})-2\times 8\ge 25-16=9$. In both cases,
    as $G'$ is $9$-edge-connected, \Cref{ob:partition_bound_notin}\ref{ob:5_10-edge-connected} gives
    $G'/\mathcal{P}'\notin\mathcal{N}\cup\mathcal{W}^*$. Assume instead that $v_1$, $v_2$, $v_3$, and $w_{xyz}$ are in four different parts of $\mathcal{P}'$. 
    Only the 8 lifted edges between $\{v_1,v_2,v_3\}$ and $\{x,y,z\}$ are counted in $w_{G}(\mathcal{P})$ but not in $w_{G'}(\mathcal{P}')$. 
    Hence, $w_{G'}(\mathcal{P}')\geq w_G(\mathcal{P})-2\times 8\geq20-16=4$. But in this case, 
    $|\mathcal{P}'|\geq 4$; thus $G'/\mathcal{P}'\notin\mathcal{N}\cup\mathcal{W}^*$. 
    
    Altogether, $w(G')\geq 4$. Hence, $G'$ is $\mathcal{S}$-good, contradicting 
    \Cref{thm:main_statement_SZ9_k8}\ref{thm:2_main_statement_SZ9_k8}.
\end{proof}

The proof of our final lemma relies on the fact that $G$ is planar.

\begin{lemma}\label{lem:T_444}
Assume that $G$ contains a face $f$ that is the inner face of $T_{4,4,4}$. If $f$ is weakly adjacent to three $3$-faces $f_1, f_2$, and $f_3$, then for all distinct $i,j\in[3]$, faces $f_i$ and $f_j$ cannot be weakly adjacent.
\end{lemma}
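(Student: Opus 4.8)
The plan is to assume, for contradiction, that $f_i$ and $f_j$ are weakly adjacent for some distinct $i,j\in[3]$, and to produce a copy of $T_{1,1,7}^{o}$ in $G$, contradicting~\Cref{cla:configurations_pairs_at most5}. Write $x,y,z$ for the vertices of the $T_{4,4,4}$ whose central face is $f$, and $B_{xy},B_{yz},B_{zx}$ for its three bundles of parallel edges. I would first record two easy facts. First, $\mu_G(xy)=\mu_G(yz)=\mu_G(zx)=4$: each multiplicity is at least $4$ because $T_{4,4,4}\subseteq G$, and if, say, $\mu_G(xy)\ge5$ then, since $\mu_G(xz),\mu_G(yz)\ge4$, the graph $G$ would contain $T_{3,3,5}$, contradicting~\Cref{cla:T335}. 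Second, since $f$ is bounded by exactly the three innermost bundle edges, any weak-adjacency chain leaving $f$ is forced to run straight through one bundle; hence all internal digons of each bundle are $2$-faces (each bundle is ``clean''), and each $f_i$ is the $3$-face immediately beyond one bundle. If two of $f_1,f_2,f_3$ coincide the claim is vacuous, so I would assume they are pairwise distinct; after relabelling, $f_1$ lies beyond $B_{xy}$, $f_2$ beyond $B_{yz}$, $f_3$ beyond $B_{zx}$, and $f_1$ is a triangle on $\{x,y,p_1\}$ sharing the outermost edge of $B_{xy}$, where necessarily $p_1\ne z$ (otherwise $f_1$ would be bounded by the outermost edges of all three bundles, i.e.\ $f_1=f_2=f_3$). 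Define $p_2,p_3$ analogously.

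By the $S_3$-symmetry of $T_{4,4,4}$, I may assume $f_1$ and $f_2$ are weakly adjacent, so a chain of $2$-faces joins them. A $2$-face is a digon, and a chain of $2$-faces entering a ``fan'' of mutually parallel edges must stay in that fan; so the chain leaves $f_1$ across one of its three edges and enters $f_2$ across a parallel edge. Leaving $f_1$ across its $B_{xy}$-edge dead-ends back at $f$; leaving across its $xp_1$-edge cannot reach $f_2$, since (using $p_1\ne z$) no edge of $f_2$, whose vertex set is $\{y,z,p_2\}$, is parallel to an $xp_1$-edge. So the chain runs along $yp_1$-edges, and for it to reach $f_2$ we need $f_2$ to have a $yp_1$-edge, which forces $p_1=p_2=:p$.

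Next I would read off the rotation at $y$: traversing once around $y$, the incident faces are the digons of $B_{xy}$, then $f_1$, then the (clean) fan of $yp$-edges, then $f_2$, then the digons of $B_{yz}$, then $f$ — and that exhausts the edges at $y$. Hence every edge at $y$ joins it to $x$, $z$, or $p$, so $d_G(y)=\mu_G(xy)+\mu_G(yz)+\mu_G(yp)=8+\mu_G(yp)$; as $G$ is $14$-edge-connected by~\Cref{lem:edge_connectivity_14}, this gives $\mu_G(yp)\ge6$. To conclude, the pair $\{y,p\}$ is joined by at least $6$ parallel edges, $x$ is adjacent to both $y$ and $p$ (through the $B_{xy}$-edges and the $xp_1$-edge of $f_1$), $z$ is adjacent to both $y$ and $p$ (through the $B_{yz}$-edges and the $zp_2$-edge of $f_2$), and $x,y,z,p$ are four distinct vertices; so $G$ contains $T_{1,1,7}^{o}$ with ``spine'' $\{y,p\}$ and ear-vertices $x,z$, contradicting~\Cref{cla:configurations_pairs_at most5}.

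The main obstacle will be the second step above: carefully eliminating every possible behaviour of a $2$-face chain between $f_1$ and $f_2$ except the one that forces $p_1=p_2$ (this needs a short case check on which edges of $f_2$ can be parallel to an edge of $f_1$, using the planar ``cleanness'' of the bundles), and then correctly extracting the entire rotation at $y$ from the cleanness of the three bundles together with the $yp$-fan. Everything after that picture is established is routine.
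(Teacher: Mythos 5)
Your proof is correct and is essentially the paper's argument: both single out the triangle vertex shared by the two weakly adjacent $3$-faces (your $y$, the paper's $x$), use the absence of $T_{3,3,5}$ to pin the triangle multiplicities at $4$, use planarity to see that the whole rotation at that vertex is covered by the two bundles together with the fan to the common fourth vertex, and then play $\delta(G)\geq 14$ (via the $14$-edge-connectivity lemma) against the forbidden $T^{o}_{1,1,7}$ --- you deduce $\mu(yp)\geq 6$ and exhibit $T^{o}_{1,1,7}$, while the paper bounds $\mu(xw)\leq 5$ and contradicts the degree bound, which is the same computation read in the opposite direction. Your parenthetical justification that $p_1\neq z$ (``otherwise $f_1=f_2=f_3$'') is not quite the right reason, since the outer triangle on $\{x,y,z\}$ could a priori coexist with two other distinct $3$-faces, but ruling this degenerate case out is a minor point that the paper's own proof also passes over silently, so it does not amount to a substantive difference.
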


\begin{proof}
     Assume to the contrary, without loss of generality,  that $f_1$ and $f_2$ are weakly adjacent. Let $V(T_{4,4,4})=\{x,y,z\}$ and let $w$ be the vertex such that $f_1$ and $f_2$ are weakly adjacent across the multiedge $xw$. Thus there is a multi-$K_4$ induced on $x,y,z,w$. We fix the planar embedding such that $x$ is the interior vertex and $f_1, f_2$, and $f$ are three facial $3$-cycles containing $x$. Since $G$ contains no $T_{3,3,5}$ (by \Cref{cla:T335}), $\mu_G(xy)=\mu_G(yz)=\mu_G(xz)=4$. Since $G$ contains no $T^o_{1,1,7}$ (by \Cref{cla:configurations_pairs_at most5}), we have $\mu_{G}(xw) \leq 5$. We note that $x$ is the interior vertex, so by planarity $N_G(x)=\{w,y,z\}$. By~\Cref{lem:edge_connectivity_14}, $d_{G}(x)\geq \delta(G)\geq 14$. However, $d_{G}(x)=\mu_G(xw)+\mu_G(xy)+\mu_G(xz)\leq 5+4+4=13$, a contradiction. 
\end{proof}

\subsection{Discharging part}
\label{sec:discharging}
By all the previous lemmas, $G$ contains no configuration in the set $$\{8K_2, T_{1,1,7}, T_{2,2,6}, T_{3,3,5},  T_{1,1,7}^o, T_{2,2,6}^o, Q_{1,1,1,7}, Q_{1,1,1,7}^o,  Q_{6,6,6,7}^o, Q_{6,6,6,7}^{oo}, V_{1,1,1,1,7}, F\}.$$

Note that our minimal counterexample $G$ satisfies $\omega(G)\geq 0$, so $2e(G)\geq 23v(G)-42$. 
Using this to substitute for $v(G)$ in Euler's formula that $v(G)+f(G)-e(G)=2$, we get

\begin{equation}\label{equ:discharge}
    \sum\limits_{f\in F(G)}d(f)=2e(G)\leq (2+\frac{4}{21})f(G)-\frac{8}{21}.   
\end{equation}

We assign to each face $f$ an initial charge $d(f)$; thus the total initial charge is strictly smaller than $\frac{46}{21}f(G)$. We then apply the following discharging rules to redistribute the charges among the faces.
	
\medskip
\noindent
{\bf Rule~(A)}. \emph{Each $3^+$-face gives charge $\frac{2}{21}$ to each of its weakly adjacent $2$-faces.}
	
\medskip
\noindent{\bf Rule~(B)}. \emph{Every inner $4$-face of $Q_{a,b,c,d}$ with $a+b+c+d\leq 22$ gives charge $\frac{1}{21}$ to each of its weakly adjacent $3$-faces.}
\emph{Every inner $3$-face of $T_{a,b,c}$ with $a+b+c\le 11$ gives charge $\frac{1}{42}$ to each of its weakly adjacent $3$-faces across each edge with multiplicity at least $4$.}	

\medskip
\noindent
{\bf Rule~(C)}. \emph{Each $5^+$-face gives charge $\frac{9}{105}$ to each of its weakly adjacent $3$-faces and $4$-faces.}
\medskip
	
We now prove that after discharging each face ends with charge at least $\frac{46}{21}$, which is a contradiction. 

\smallskip
By {\bf Rule~(A)}, every 2-face $f$ receives $\frac{2}{21}$ from each of its two weakly adjacent $3^+$-faces; thus $f$ ends with 
at least $2+2\times\frac{2}{21}=2+\frac{4}{21}$.
	
We first consider a $6^+$-face $f$. Since $G$ contains no $8K_2$, $f$ has at most $6d(f)$ weakly adjacent $2$-faces. Moreover, since $G$ contains no $T_{1,1,7}$ and no $Q_{1,1,1,7}$, $f$ sends charge in total at most $d(f)\times\max\{6\times\frac{2}{21}, 
5\times\frac{2}{21}+\frac{9}{105}\}=\frac{12}{21}d(f)$ to its weakly adjacent $2$-faces, $3$-faces, and $4$-faces 
by {\bf Rules~(A) and (C)}.  Thus $f$ ends with at least $d(f)-\frac{12}{21}d(f)=\frac{9}{21}d(f)>\frac{46}{21}$.
	
Next, we consider an inner $5$-face $f$ of the subgraph $V_{a,b,c,d,e}$ (a multi-$C_5$). Note that $V_{a,b,c,d,e}$ with $a+b+c+d+e\geq 31$ must contain $7K_2$ as a subgraph, and hence contain $V_{1,1,1,1,7}$, contradicting~\Cref{cla:configurations_pairs_at most5}. 
Thus $a+b+c+d+e\leq 30$. By {\bf Rules~(A) and (C)}, the face $f$ ends with at least $5-25\times\frac{2}{21}-5\times\frac{9}{105}=\frac{46}{21}$. 
	
Next, we consider an inner $4$-face $f$ of the subgraph $Q_{a,b,c,d}$. Since $G$ contains no $Q_{1,1,1,7}$, $a+b+c+d\leq 4\times 6=24$ and $\mu\leq 6$. Moreover, since $G$ contains no $Q^o_{1,1,1,7}$, $f$ cannot be weakly adjacent to any $3$-face across an edge with multiplicity $6$. We consider the cases based on the value of $a+b+c+d$. Recall that {\bf Rule (B)} applies only if $a+b+c+d\leq 22$.
\begin{itemize}[itemsep=-2pt]
     \item If $a+b+c+d\leq 21$, then by {\bf Rules~(A) and (B)}, $f$ ends with a charge of at least $4-(a+b+c+d-4)\times\frac{2}{21}-4\times \frac{1}{21}=\frac{88-2(a+b+c+d)}{21}\geq \frac{46}{21}$.

     \item If $a+b+c+d= 22$, then we only need to consider $Q_{5,5,6,6}$, $Q_{5,6,5,6}$, and $Q_{4,6,6,6}$. Note that if $f$ is of $Q_{5,5,6,6}$ or $Q_{5,6,5,6}$, then $f$ is weakly adjacent to at most two $3$-faces; and if $f$ is of $Q_{4,6,6,6}$, then $f$ is weakly adjacent to at most one $3$-face. By {\bf Rules~(A) and (B)}, then $f$ ends with at least $4-18\times\frac{2}{21}-\max\{1,2\}\times \frac{1}{21}=\frac{46}{21}$. 

    \item If $a+b+c+d=23$, then the $4$-face $f$ must be the inner face of $Q_{5,6,6,6}$, and by {\bf Rule~(A)}, $f$ ends with 
at least $4-19\times \frac{2}{21}=\frac{46}{21}$. 
     
    \item If $a+b+c+d=24$, then we just need to consider $Q_{6,6,6,6}$. Because $G$ contains no $Q_{1,1,1,7}^o$ and no $Q_{6,6,6,7}^o$, the $4$-face $f$ of $Q_{6,6,6,6}$ must be weakly adjacent to four $5^+$-faces. By {\bf Rules~(A) and (C)}, the face $f$ (of $Q_{6,6,6,6}$) ends with at least $4-20\times\frac{2}{21}+4\times\frac{9}{105}=\frac{256}{105}>\frac{46}{21}$.
\end{itemize}

Finally, we consider an inner $3$-face $f$ of the subgraph $T_{a,b,c}$. Since $G$ contains no $T_{1,1,7}$, $T_{2,2,6}$, and $T_{3,3,5}$, $a+b+c\leq \max\{1+6+6, 2+5+5, 4+4+4\}=13$ and $\mu\leq 6$. Moreover, since $G$ contains neither $T^o_{1,1,7}$ nor $Q^o_{1,1,1,7}$, $f$ cannot be weakly adjacent to any $3$-face or $4$-face across an edge with multiplicity $6$. We consider the cases based on the value of $a+b+c$.

\begin{itemize}[itemsep=-2pt]
    \item If $a+b+c\leq 11$, then at most two of $a$, $b$, and $c$ are larger than or equal to $4$. Recall that in {\bf Rule (B)} that every inner $3$-face of $T_{a,b,c}$ with $a+b+c\le 11$ gives charge $\frac{1}{42}$ to each of its weakly adjacent $3$-faces only across each edge with multiplicity at least $4$. By {\bf Rules~(A) and (B)}, the face $f$ ends with at least $3-(a+b+c-3)\times\frac{2}{21}-2\times\frac{1}{42}=\frac{68-2(a+b+c)}{21}\geq \frac{46}{21}$. 

    \item If $a+b+c\geq12$, then we only need to consider the following configurations: $T_{1,5,6}$, $T_{2,5,5}$, $T_{4,4,4}$, and $T_{1,6,6}$. 

    For $T_{1,5,6}$ and $T_{1,6,6}$, the face $f$ of $T_{1,5,6}$ or $T_{1,6,6}$ is weakly adjacent with $5^+$-faces across each edge with multiplicity $6$. So by {\bf Rules~(A) and (C)}, each $f$ of $T_{1,5,6}$ ends with at least $3-9\times\frac{2}{21}+\frac{9}{105}=\frac{234}{105}>\frac{46}{21}$ and each $f$ of $T_{1,6,6}$ ends with at least $3-10\times\frac{2}{21}+2\times\frac{9}{105}=\frac{233}{105}>\frac{46}{21}$. 
    
    For $T_{2,5,5}$, since $G$ has neither $T_{2,2,6}^o$ nor $Q_{6,6,6,7}^{oo}$, each $f$ of $T_{2,5,5}$ is weakly adjacent (across each edge of multiplicity 5) with a $5^+$-face or an inner $4$-face of the subgraph $Q_{a,b,c,d}$ with $a+b+c+d\leq 5+5+6+6=22$. Thus by {\bf Rules~(A), (B), and (C)}, each $f$ of $T_{2,5,5}$ ends with at least $3-9\times\frac{2}{21}+2\times\min\{\frac{1}{21},\frac{9}{105}\}>\frac{46}{21}$. 
    
    Finally, consider an inner $3$-face $f$ of the subgraph $T_{4,4,4}$.  By~\Cref{lem:T_444}, if $f$ of $T_{4,4,4}$ is weakly adjacent to three $3$-faces, then they are pairwise not weakly adjacent to each other. Since $G$ contains no copy of $F$, either (a) some face $f'$ that is weakly adjacent to $f$ is a $5^+$-face or a $4$-face of $Q_{a,b,c,d}$ with $a+b+c+d\leq 4+6+6+6=22$; or (b) every face $f'$ that is weakly adjacent to $f$ is the inner $3$-face of $T_{a,b,c}$ with $a+b+c\le 1+4+6=11$. Hence, by {\bf Rules~(A), (B), and (C)}, $f$ finishes with at least $3-9\times\frac{2}{21}+\min\{\frac{1}{21},\frac{9}{105},3\times\frac{1}{42}\}=\frac{46}{21}$.
\end{itemize}
	
We are done.

\section*{Acknowledgments}

Jiaao Li is supported by National Key Research and Development Program of China (No. 2022YFA1006400), National Natural Science Foundation of China (Nos. 12222108, 12131013), Natural Science Foundation of Tianjin (No. 22JCYBJC01520), and the Fundamental Research Funds for the Central Universities, Nankai University.
Zhouningxin Wang is supported by National Natural Science Foundation of China (Nos. 12301444) and the Fundamental Research Funds for the Central Universities, Nankai University.

\end{document}